\newtheorem{thm}{Theorem}
\newtheorem{lma}[thm]{Lemma} \newtheorem{prop}[thm]{Proposition}
\newtheorem{defn}[thm]{Definition}
\newtheorem{ex}[thm]{Example}
\newtheorem{rem}[thm]{Remark}
\newtheorem{ass}{Assumption}
\def\1{\mathbb{I}}
\def\C{\mathbb{C}}
\DeclareMathOperator{\Cl}{Cl}
\DeclareMathOperator{\dom}{Dom}
\def\into{\hookrightarrow}
\def\supp{\textup{supp}}
\DeclareMathOperator{\tr}{Tr}
\def\tilde{\widetilde}
\title[Factorizing Dirac operators on almost-regular fibrations]{Factorization of Dirac operators on almost-regular fibrations of spin$^c$ manifolds}
\author{Jens Kaad and Walter D. van Suijlekom}
\address{Department of Mathematics and Computer Science, Syddansk Universitet, Campusvej 55, 5230, Odense M, Denmark}
\email{jenskaad@hotmail.com}
\address{Institute for Mathematics, Astrophysics and Particle Physics, Radboud University Nijmegen, Heyendaalseweg 135, 6525 AJ Nijmegen, The Netherlands}
\email{waltervs@math.ru.nl}
\date{\today}
\subjclass[2010]{19K35; 53C27, 58B34} 
\keywords{Unbounded Kasparov modules, Half-closed chains, Dirac operators, Spin$^c$-manifolds, Proper Riemannian submersions, KK-theory, Unbounded KK-theory, Kasparov product, Unbounded Kasparov product}
\begin{document}

\begin{abstract}
We establish the factorization of the Dirac operator on an almost-regular fibration of spin$^c$ manifolds in unbounded KK-theory. As a first intermediate result we establish that any vertically elliptic and symmetric first-order differential operator on a proper submersion defines an unbounded Kasparov module, and thus represents a class in KK-theory. Then, we generalize our previous results on factorizations of Dirac operators to proper Riemannian submersions of spin$^c$ manifolds. This allows us to show that the Dirac operator on the total space of an almost-regular fibration can be written as the tensor sum of a vertically elliptic family of Dirac operators with the horizontal Dirac operator, up to an explicit `obstructing' curvature term. We conclude by showing that the tensor sum factorization represents the interior Kasparov product in bivariant K-theory. 
\end{abstract}

\maketitle

\section{Introduction}
In this paper we study factorizations of Dirac operators on singular fibrations of spin$^c$ manifolds into vertical and horizontal components. We restrict ourselves to {\em almost-regular fibrations} of spin$^c$ manifolds; these are defined to be Riemannian spin$^c$ manifolds $\overline M$ for which a proper Riemannian submersion to a spin$^c$ manifold $B$ is defined on the (open, dense) complement $M$ of a finite collection of compact embedded submanifolds which all have codimension strictly greater than 1. We will show that the Dirac operator $D_{\ov M}$ on the total space can be factorized as a tensor sum of a (vertically elliptic) family of Dirac operators $D_V$ with the horizontal Dirac operator $D_B$, up to an explicit curvature term $\Omega$. Thus, we obtain that (up to unitary equivalence and on a core of $D_{\overline{M}}$) we can write
\[
D_{\overline{M}} = D_V \otimes 1 + 1 \otimes_\nabla D_B + \Omega \, ,
\]
where $\nabla$ is a suitable metric connection on vertical spinors. 

We show that our factorization results fit well into the context of noncommutative geometry \cite{C94} by proving (Theorem \ref{thm:almost-regular-fact}) that the tensor sum represents the interior Kasparov product in bivariant K-theory (or KK-theory) \cite{Kas80b}. That is to say, we show that $D_V$ defines an unbounded Kasparov module (aka unbounded KK-cycle \cite{BJ83}) that represents a class in $KK(C_0(M),C_0(B))$ whereas $D_B$ defines a half-closed chain (in the sense of \cite{Hil10}), representing the fundamental class of $B$ in $KK(C_0(B),\C)$. Similarly, since we do not assume that the Riemannian manifold $\ov M$ is complete, the Dirac operator $D_{\ov M}$ defines a half-closed chain (and not necessarily a spectral triple) representing the fundamental class of $\ov M$ in $KK(C_0(\ov M),\C)$. The tensor sum factorization is then an unbounded representative of the interior Kasparov product 
\begin{multline*}
\hot_{C_0(B)} : KK(C_0(M), C_0(B)) \times KK(C_0(B),\C) 
\to KK(C_0(M),\C)
\end{multline*}
in the sense that
\[
\imath^* [D_{\ov M}] = [D_V] \hot_{C_0(B)} [D_{B}] \, ,
\]
where $\imath^*$ is the pullback homomorphism in bivariant K-theory of the $*$-homomorphism $\imath : C_0(M) \to C_0(\ov M)$ given by extension by zero. We stress that the curvature term $\Omega$ is not visible at the level of bounded KK-theory. Indeed, it is the great advantage of working at the level of unbounded KK-cycles and half-closed chains that geometric information remains intact. In the course of the proof of the above result, we will establish (Proposition \ref{prop:diff-op-dense-subset}) that $\imath^*[D_{\ov M}] = [D_M]$, in terms of the Dirac operator $D_M$ on $M$. Then, the above factorization result follows from the result (Theorem \ref{thm:fact-KK}) that the Dirac operator $D_M$ represents the same class in $KK(C_0(M),\C)$ as the above tensor sum does. This is a generalization of our previous result \cite[Theorem 24]{KS16} to proper Riemannian submersions (not necessarily between compact manifolds), for which we make heavy use of a generalization to half-closed chains of a theorem by Kucerovsky \cite[Theorem 13]{Kuc97}, which we proved recently in \cite{KS17a}.

\bigskip

Let us spend a few words on the main motivation for this paper. This comes from a class of almost-regular fibrations derived from actions of tori on Riemannian manifolds (see Examples \ref{ex:action} and \ref{ex:action-spinc} below for more details). Consider a torus $G$ acting on a Riemannian spin$^c$ manifold $N$ (thus respecting the metric and spin$^c$ structure on $M$). Let $N_0 \subseteq N$ denote the principal stratum for the action and suppose that all other orbits are singular ({\it i.e.} there are no exceptional orbits). Then our main factorization result (Theorem \ref{thm:almost-regular-fact}) implies that the Dirac operator $D_N$ on $N$ can be written as 
\[
D_N = D_V \otimes 1 + 1 \otimes_\nabla D_{N_0/G} + \Omega \, ,
\]
in terms of a vertically elliptic family of Dirac operators $D_V$, the Dirac operator $D_{N_0/G}$ on the principal orbit space, and the curvature $\Omega$ of the proper Riemannian submersion $N_0 \to N_0/G$. Moreover, this tensor sum factorization is an unbounded representative of the interior Kasparov product:
\[
\imath^* [D_N] = [D_V] \otimes_{C_0(N_0/G)} [D_{N_0/G}] \, ,
\]
where $\imath^*$ is the pullback homomorphism in KK-theory of the embedding map $\imath : N_0 \into  N$. 

In particular, we are motivated by the implications the above factorization results will have for Dirac operators on toric noncommutative manifolds \cite{CL01,CD02,Sui14b}.

Interestingly, this very same class of examples also arises in the study of holonomy groupoids for singular foliations. Indeed, the orbits of an isometric Lie group action constitute an example of singular Riemannian foliations \cite{Mol88,Ale10} and, more generally, orbits of any Lie group action are an example of the almost-regular foliations that are analyzed using holonomy groupoids in \cite{AS09,Deb01}. We expect that ---at least in the context of these examples--- our factorization results of Dirac operators in unbounded KK-theory and the appearance of curvature can be applied to index theory on singular foliations as studied in \cite{C82,CS84,HS87,DL10,DS17}.

Another potential application of our results is to equivariant index theory. For instance, in \cite{HS17} a decomposition appears of a $G$-equivariant Dirac operator $D_M$ on a spin$^c$ manifold $M$ carrying a proper $G$-action. More precisely, after identifying $M \cong G \times_K N$ as a bundle over $G/K$ for a suitable subgroup $K \subseteq G$ and submanifold $N \subseteq M$, they write $D_M = D_{G/K} + D_N$ with $D_{G/K}$ a so-called $G$-differential operator and $D_N$ differentiating only in the vertical direction. It is an interesting question to see whether this formula can be cast as a tensor sum and represents the interior Kasparov product of the corresponding classes in bivariant K-theory.

On a more general level our paper could indicate how to proceed in the search for an unbounded, geometric version of bivariant K-theory. This is intimately related to the construction of a category of spectral triples, initiated in \cite{Mes09b} and ongoing in \cite{KL13,DGM15,FGMR16,Kaa16,ML18}. It is clear that having examples (or, even better, classes of examples) of unbounded representatives of the interior Kasparov product is of vital importance for finding the correct geometrical enrichment of the KK-category \cite{Cun83,Hig87,MN04}. A list of such examples includes \cite{BD13,BoeS10,BMS13,FR15b,KS17c,KS16}, and to which we now add the present general construction.

\subsection*{Acknowledgements}
We would like to thank Peter Hochs for a very useful suggestion on how to prove the compactness of the resolvent in the context of fiber bundles.
%

We gratefully acknowledge the Syddansk Universitet Odense and the Radboud University Nijmegen for their financial support in facilitating this collaboration. 

During the initial stages of this research project the first author was supported by the Radboud excellence fellowship.
%
%

The first author was partially supported by the DFF-Research Project 2 ``Automorphisms and Invariants of Operator Algebras'', no. 7014-00145B and by the Villum Foundation (grant 7423).

The second author was partially supported by NWO under VIDI-grant 016.133.326.

\section{Unbounded Kasparov modules and fiber bundles}
\label{sect:fiberbundles}
Let $\pi : M \to B$ be a smooth fiber bundle with a {\it compact} model fiber $F$. We do not put any compactness restrictions on the manifolds $M$ and $B$ (but they are not allowed to have a boundary). By Ehresmann's fibration Theorem ({\it cf.} \cite[Lemma 17.2]{Mic08} for a proof) the above is equivalent to demanding that $\pi:M \to B$ is a proper and surjective submersion. We will assume that $M$ comes equipped with a Riemannian metric in the fiber direction. In particular, we have a fixed hermitian form
\[
\inn{\cd,\cd}_V : \sX_V(M) \ti \sX_V(M) \to C^\infty(M)
\]
on the $C^\infty(M)$-module of (complex) vertical vector fields $\sX_V(M) \su \sX(M)$. We will moreover assume that we have a smooth hermitian complex vector bundle $E \to M$. We denote the $C^\infty(M)$-module of smooth sections of $E$ by $\sE := \Ga^\infty(M,E)$ and the $C^\infty_c(M)$-module of compactly supported sections of $E$ by $\sE^c := \Ga^\infty_c(M,E)$. We denote the hermitian form by 
\[
\inn{\cd,\cd}_{\sE} : \sE \ti \sE \to C^\infty(M) \, . 
\]
We remark that $\sE$ can be considered as a $C^\infty(M)$-$C^\infty(B)$-bimodule where the right action is given by $(s \cd f)(x) := s(x) \cd f(\pi(x))$ for all $s \in \sE$, $f \in C^\infty(B)$, $x \in M$. We finally fix a first-order differential operator
\[
\sD : \sE \to \sE
\]
which only differentiates in the fiber direction, or in other words: $\sD$ is $C^\infty(B)$-linear.

We let $\si_{\sD} : \Tex{Hom}_{C^\infty(M)}\big( \sX(M), C^\infty(M) \big) \to \Tex{End}_{C^\infty(M)}( \sE)$ denote the principal symbol of $\sD$ such that
\[
\si_{\sD}( df) = [\sD,f] \q f \in C^\infty(M) \, ,
\]
where $df \in \Tex{Hom}_{C^\infty(M)}\big( \sX(M), C^\infty(M) \big)$ denotes the exterior derivative of $f \in C^\infty(M)$. We let $d_V f \in \Tex{Hom}_{C^\infty(M)}\big( \sX_V(M), C^\infty(M) \big)$ denote the restriction of $df$ to the $C^\infty(M)$-submodule $\sX_V(M) \su \sX(M)$.

\begin{defn}\label{d:verell}
We say that $\sD$ is {\it vertically elliptic} when it holds for all $x \in M$ that the symbol $\si_{\sD}(df)(x) : E_x \to E_x$ is invertible whenever $f \in C^\infty(M)$ satisfies that $(d_V f)(x) : T_V(M)_x \to \CC$ is non-trivial, where $T_V(M) \to M$ denotes the complex vertical tangent bundle.
\end{defn}

The fiber-wise Riemannian metric $\inn{\cd,\cd}_V : \sX_V(M) \times \sX_V(M) \to C^\infty(M)$ gives rise to a Riemannian metric $\inn{\cd,\cd}_b : \sX(M_b) \ti \sX(M_b) \to C^\infty(M_b)$ on each of the fibers $M_b := \pi^{-1}(\{b\}) \su M$, $b \in B$. Indeed, the inclusion $i_b : M_b \to M$ induces an isomorphism $di_b : \sX(M_b) \to \sX_V(M) \ot_{C^\infty(M)} C^\infty(M_b)$ of $C^\infty(M_b)$-modules for all $b \in B$. In particular, we have an associated measure $\mu_b$ on $M_b$ for all $b \in B$. We may thus define the $C_c^\infty(B)$-linear map
\[
\rho : C^\infty_c(M) \to C_c^\infty(B) \q \rho(f)(b) := \int_{M_b} f|_{M_b} \, d \mu_b \q b \in B
\]
by integration over the fiber. This operation provides us with a $C^*$-correspondence $X$ from $C_0(M)$ to $C_0(B)$ defined as the completion of $\sE^c$ with respect to the inner product
\begin{equation}\label{eq:fibinn}
\inn{s,t}_X := \rho( \inn{s,t}_{\sE}) \q s,t \in \sE^c \, .
\end{equation}
Notice that the bimodule structure on $X$ is induced by the $C^\infty_c(M)$-$C^\infty_c(B)$-bimodule structure on $\sE^c$. We let $\mathbb{L}(X)$ denote the $C^*$-algebra of bounded adjointable operators on $X$ and denote the $*$-homomorphism providing the left action on $X$ by $m : C_0(M) \to \mathbb{L}(X)$.

We may promote our first-order differential operator $\sD : \sE \to \sE$ to an unbounded operator
\[
D_0 : \sE^c \to X \q D_0(s) := \sD(s) \, .
\]
We immediately record that the commutator
\[
[ D_0, m(f) ] : \sE^c \to X
\]
extends to a bounded operator $\de(f) : X \to X$ for all $f \in C_c^\infty(M)$. We emphasize that this is true even though the first-order differential operator $\sD : \sE \to \sE$ is not assumed to have finite propagation speed.

\begin{defn}\label{d:symm}
We say that $\sD : \sE \to \sE$ is \emph{symmetric} when $D_0 : \sE^c \to X$ is symmetric in the sense that
$\inn{D_0 (s), t}_X = \inn{s, D_0 (t)}_X$ for all $s,t \in \sE^c$.
\end{defn}

When $\sD : \sE \to \sE$ is symmetric we denote the closure of $D_0 : \sE^c \to X$ by $D := \ov{D_0} : \Tex{Dom}(D) \to X$. 

Our first main result can now be formulated. The full proof will occupy the remainder of this section.

\begin{thm}\label{thm:vertical-KK-cycle}
Suppose that the first-order differential operator $\sD : \sE \to \sE$ is vertically elliptic and symmetric. Then the triple $\big( C^\infty_c(M), X, D \big)$ is an odd unbounded Kasparov module from $C_0(M)$ to $C_0(B)$. Moreover, $\big( C^\infty_c(M),X,D \big)$ is even when $\sE$ comes equipped with a $\zz/2\zz$-grading operator $\ga \in \Tex{End}_{C^\infty(M)}(\sE)$ which anti-commutes with $\sD$ (in this case $\ga$ induces the grading operator on $X$).
\end{thm}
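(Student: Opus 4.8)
To show that $(C^\infty_c(M), X, D)$ is an unbounded Kasparov module we must verify three things: (i) for every $f \in C^\infty_c(M)$ the commutator $[D, m(f)]$ extends to a bounded adjointable operator on $X$; (ii) the operator $D$ is self-adjoint and regular on the Hilbert $C_0(B)$-module $X$; and (iii) the resolvent $m(f)(D \pm \ii)^{-1}$ is a compact operator on $X$ for every $f \in C^\infty_c(M)$ (and likewise $(D\pm\ii)^{-1} m(f)$). Item (i) is essentially already recorded in the excerpt: the commutator $[D_0, m(f)]$ equals the zeroth-order operator $\sigma_\sD(df)$ on $\sE^c$, which is $C^\infty(M)$-linear, so it extends to the bounded adjointable operator $\delta(f)$ on $X$; one also needs that $m(f)$ preserves $\dom(D)$, which follows from the standard commutator argument. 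The bulk of the work is (ii) and (iii).

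For self-adjointness and regularity (ii), the plan is to work fiberwise. Since $\sD$ only differentiates in the fiber direction and is vertically elliptic, its restriction $\sD_b$ to each fiber $M_b$ is a genuine elliptic symmetric first-order differential operator on the \emph{compact} manifold $M_b$, acting on sections of $E|_{M_b}$. Classical elliptic theory then gives that the closure $D_b$ on $L^2(M_b, E|_{M_b})$ is self-adjoint with compact resolvent. The idea is to assemble these fiberwise statements into a statement about the Hilbert module operator $D$ on $X$, using the local triviality of the bundle $\pi : M \to B$: over a chart $U \subseteq B$ with $\pi^{-1}(U) \cong U \times F$, the module $X$ restricted to $U$ becomes (a completion of) $C_c^\infty(U) \otimes \Gamma^\infty(F, E_0)$, and $D$ becomes a continuous family of elliptic operators on $F$. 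Localizing via a partition of unity on $B$, one reduces to this trivial situation, where self-adjointness and regularity of $D$ follow from the family version of the elliptic estimate together with a standard argument (e.g. the local-global principle for regular operators, or directly checking that $D \pm \ii$ has dense range). This requires care because the base $B$ is noncompact and the individual fibers are compact but the model fiber need not have a canonical metric; one uses the given fiberwise Riemannian metric and the bimodule structure to keep everything $C^\infty(B)$-linear.

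The main obstacle is (iii), the compactness of $m(f)(D \pm \ii)^{-1}$. Here the point is precisely the one acknowledged in the paper (the thanks to Peter Hochs): on a noncompact base $B$ one cannot simply invoke compactness of the fiberwise resolvent, because compactness in $\mathbb{L}(X)$ means lying in the norm-closure of finite-rank operators over $C_0(B)$, which also sees the base direction. The strategy will be: first cut down by $m(f)$ with $f \in C_c^\infty(M)$, so that everything is supported over a compact set $K = \pi(\supp f) \subseteq B$; cover $K$ by finitely many trivializing charts and use a partition of unity to write $m(f)(D\pm \ii)^{-1}$ as a finite sum of pieces each supported in a single chart $U \times F$ with $F$ compact. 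On such a piece one combines the Rellich lemma on the compact fiber $F$ with compactness in the $C_0(U)$-direction coming from the compact support of $f$ along the base: concretely, one approximates $(D\pm\ii)^{-1}$ in operator norm by operators of the form $g(D\pm\ii)^{-1}$ composed with smoothing operators along $F$ times multiplication operators along $U$, and shows these lie in $\K(X)$. Uniform control of the elliptic estimates over the compact set $K$ is what makes the approximation work. Finally, the even case is immediate: the grading operator $\gamma \in \End_{C^\infty(M)}(\sE)$ is bounded adjointable on $X$, commutes with $m(f)$, and anticommutes with $D$ on the core $\sE^c$ and hence with $D$, so $(C^\infty_c(M), X, D, \gamma)$ is an even unbounded Kasparov module.
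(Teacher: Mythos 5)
Your items (i) and (ii) are fine and essentially follow the paper: the commutator bound is immediate from $C^\infty(M)$-linearity of the symbol, and selfadjointness with regularity is obtained, exactly as in Proposition \ref{p:regular}, from the local-global principle by checking that each localization at a point $b\in B$ is the closure of a symmetric first-order operator on the \emph{compact} fibre $M_b$, hence selfadjoint (note that ellipticity plays no role there; it only enters for compactness of resolvents). The genuine gap is in (iii). You correctly diagnose the difficulty -- over a chart, compactness in $\mathbb{L}(X)$ means being an \emph{operator-norm continuous} family $U\to\mathbb{K}\big(L^2(F)^{\oplus k}\big)$ vanishing at infinity, so fibrewise Rellich compactness is not enough -- but your proposed remedy (``approximate $(D\pm i)^{-1}$ in operator norm by smoothing operators along $F$ times multiplication operators along $U$; uniform control of the elliptic estimates makes the approximation work'') restates the desired conclusion rather than proving it. The missing idea is a proof that the localized resolvent family $x\mapsto (1+D_x^*D_x)^{-1}$, together with its composition with the vertical gradient, is continuous in operator norm in the base variable. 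In the paper this is Lemma \ref{l:opnorcon}, and it needs concrete input: a \emph{uniform} G\aa rding inequality over the chart (Assumption \ref{a:localII}) yielding $\sup_x\|\nabla(1+D_x^*D_x)^{-1/2}\|<\infty$, plus sup-norm continuity of the coefficient matrices of the operator in the base variable, combined via resolvent identities. Without this (or an equivalent device) ``uniform elliptic estimates'' do not by themselves give norm continuity in the base direction, and your approximation step cannot be carried out.

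Two further points are glossed over. First, the bundle $E$ need not be trivial over sets of the form $\pi^{-1}(U)$, so charts of the shape ``$U\times F$ with everything trivial'' are not available in general; the paper therefore localizes over small open subsets of $M$ diffeomorphic to $\pi(U)\times\mathbb{B}_\delta(0)$ (Assumption \ref{a:localI}), at the price that the localized operators $D_U$ are merely symmetric, and then needs a separate, nontrivial gluing argument (Proposition \ref{p:localcompactI}, via the adjointable map $r_U(\sqrt{f})$) to pass from compactness of the local resolvents to compactness of $m(f)(i+D)^{-1}$ for the global operator; your sketch silently replaces the global resolvent by local ones. Your base-chart localization could probably be salvaged -- since $\sD$ is $C^\infty(B)$-linear the band over $U$ is invariant, and for contractible $U$ one can identify $E|_{\pi^{-1}(U)}$ with a pullback from a single fibre -- but those identifications, the continuity in $b$ of the resulting operator family, and above all the norm-continuity of the resolvents would still have to be established; that is the heart of the matter and it is absent from the proposal.
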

\begin{proof}
We already know that $m(f) : X \to X$ preserves the domain of $D$ and that the commutator $[D,m(f)] : \Tex{Dom}(D) \to X$ extends to a bounded operator on $X$ whenever $f \in C_c^\infty(M)$.

We need to show that $D : \Tex{Dom}(D) \to X$ is selfadjoint and regular and that $m(f) \cd (i + D)^{-1} : X \to X$ is a compact operator (in the sense of Hilbert $C^*$-modules) for all $f \in C_c^\infty(M)$. The selfadjointness and regularity is proved in Proposition \ref{p:regular} and the compactness result follows from Proposition \ref{p:localcompactI} and Proposition \ref{p:localcompactII}.  
\end{proof}

\subsection{Symmetry and regularity}\label{ss:symreg}
For each $b \in B$ we define the {\it localization of $X$} at the point $b \in B$ as the interior tensor product
\[
X_b := X \hot_{C_0(B)} \CC \, ,
\]
where the left action of $C_0(B)$ on $\CC$ is defined via the $*$-homomorphism $\Tex{ev}_b : C_0(B) \to \CC$ given by evaluation at the point $b \in B$. We remark that $X_b$ is a Hilbert space. 

For each $b \in B$, we denote the smooth sections of the smooth hermitian complex vector bundle $E|_{M_b} \to M_b$ by
\[
\sE_b := \Ga^\infty(M_b, E|_{M_b}) \cong \sE \ot_{C^\infty(M)} C^\infty(M_b) \, .
\]
We use the notation
\[
\inn{\cd,\cd}_{\sE_b} : \sE_b \ti \sE_b \to C^\infty(M_b)
\]
for the hermitian form (inherited from the hermitian form on $\sE$) and we let $L^2(\sE_b)$ denote the Hilbert space obtained as the completion of $\sE_b$ with respect to the inner product
\[
\inn{s,t} := \int_{M_b} \inn{s,t}_{\sE_b} \, d\mu_b \, .
\]

We remark that the map $\sE^c \to \sE_b$ defined by $s \mapsto s|_{M_b}$, or alternatively by $s \mapsto s \ot 1$, is a surjection. The next lemma can now be verified by computing the inner products involved:

\begin{lma}\label{l:uniloc}
Let $b \in B$. The linear map 
\[
\sE^c \ot_{C^\infty_c(B)} \CC \to \sE_b \q s \ot \la \mapsto s|_{M_b} \cd \la 
\]
induces a unitary isomorphism of Hilbert spaces: $X_b \cong L^2(\sE_b)$.
\end{lma}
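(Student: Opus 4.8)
The plan is to exhibit the displayed map, show it is well-defined, isometric for the relevant inner products, and has dense range, so that it extends to the claimed unitary $X_b \cong L^2(\sE_b)$. First I would recall the two completions in sight: $X_b = X \hot_{C_0(B)} \CC$ is the Hausdorff completion of the algebraic tensor product $\sE^c \ot_{C^\infty_c(B)} \CC$ with respect to the inner product $\inn{s \ot \la, t \ot \mu}_{X_b} = \bar\la \mu \cdot \ev_b\big( \inn{s,t}_X \big) = \bar\la \mu \cdot \ev_b\big( \rho(\inn{s,t}_\sE) \big)$, while $L^2(\sE_b)$ is the completion of $\sE_b = \sE \ot_{C^\infty(M)} C^\infty(M_b)$ with respect to $\int_{M_b} \inn{\cd,\cd}_{\sE_b} \, d\mu_b$. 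The definition of $\rho$ by integration over the fiber gives immediately that $\ev_b\big( \rho(\inn{s,t}_\sE) \big) = \int_{M_b} \inn{s,t}_\sE \big|_{M_b} \, d\mu_b = \int_{M_b} \inn{s|_{M_b}, t|_{M_b}}_{\sE_b} \, d\mu_b$, using that the hermitian form $\inn{\cd,\cd}_{\sE_b}$ on $\sE_b$ is by construction the restriction of $\inn{\cd,\cd}_\sE$. Hence $\inn{s \ot 1, t \ot 1}_{X_b} = \inn{s|_{M_b}, t|_{M_b}}_{L^2(\sE_b)}$, which is exactly the isometry statement on elementary tensors.

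Next I would check that the assignment $s \ot \la \mapsto s|_{M_b} \cdot \la$ is balanced over $C^\infty_c(B)$, i.e. descends from $\sE^c \times \CC$ to $\sE^c \ot_{C^\infty_c(B)} \CC$: for $f \in C^\infty_c(B)$ one has $(s \cdot f)|_{M_b} = s|_{M_b} \cdot f(b)$ since the right action of $f$ on $\sE$ is pointwise multiplication by $f \circ \pi$, and $\pi(x) = b$ for $x \in M_b$; on the other side $\ev_b(f) \cdot \la = f(b)\la$. So the map is well-defined on the algebraic tensor product. Combined with the isometry computation above, it is in particular isometric, hence injective, and extends by continuity to an isometry $X_b \to L^2(\sE_b)$. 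Surjectivity of the closure follows from the already-noted fact that the restriction map $\sE^c \to \sE_b$, $s \mapsto s|_{M_b}$, is onto (so its image, a fortiori the image of the algebraic tensor product, is dense in $L^2(\sE_b)$), together with the fact that the range of an isometry of Hilbert spaces is closed. Therefore the extension is a surjective isometry, i.e. a unitary isomorphism $X_b \cong L^2(\sE_b)$.

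I do not expect any serious obstacle here: the content is entirely the bookkeeping identity $\ev_b \circ \rho = \int_{M_b}(\cd)|_{M_b}\, d\mu_b$ together with the compatibility of the various hermitian forms and right actions. The one point deserving a line of care is the balancing over $C^\infty_c(B)$ versus $C_0(B)$: the interior tensor product $X \hot_{C_0(B)} \CC$ is a priori formed over the $C^*$-algebra $C_0(B)$, but since $C^\infty_c(B)$ is dense in $C_0(B)$ and $\sE^c$ is dense in $X$, the algebraic tensor product $\sE^c \ot_{C^\infty_c(B)} \CC$ has dense image in $X \hot_{C_0(B)} \CC$, so it suffices to define the unitary on this dense subspace and check the $C^\infty_c(B)$-balancing there. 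With that remark the lemma follows by "computing the inner products involved", as stated.
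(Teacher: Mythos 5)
Your proposal is correct and follows exactly the route the paper intends: the paper gives no written proof beyond the remark that the lemma "can now be verified by computing the inner products involved," relying on the previously noted surjectivity of the restriction map $\sE^c \to \sE_b$ for density, and your computation $\ev_b \circ \rho = \int_{M_b}(\cd)|_{M_b}\, d\mu_b$ together with the balancing and density checks is precisely that verification.
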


We recall that the model fiber $F$ of our smooth fiber bundle $\pi : M \to B$ is assumed to be compact. This plays an important role in the following proposition.

\begin{prop}\label{p:regular}
Suppose that $D_0 : \sE^c \to X$ is symmetric. Then the closure $D := \ov{D_0} : \Tex{Dom}(D) \to X$ is selfadjoint and regular.
\end{prop}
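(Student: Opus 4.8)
The plan is to prove selfadjointness and regularity by a local-to-global argument that exploits the compactness of the model fiber $F$. The key idea is that, because $\pi: M \to B$ is a fiber bundle with compact fiber, the operator $D$ should ``look like'' a continuous family of selfadjoint elliptic operators on the compact fibers $M_b$, and selfadjointness/regularity of such families can be checked fiberwise together with a uniformity statement. Concretely, I would first show that for each $b \in B$ the localized operator $D_0 \hot_{C_0(B)} \id_\CC$ on $X_b \cong L^2(\sE_b)$ (Lemma \ref{l:uniloc}) is essentially selfadjoint: this is a classical fact, since $\sD$ restricts to an elliptic, symmetric first-order differential operator on the \emph{closed} manifold $M_b$ (vertical ellipticity becomes honest ellipticity on the fiber, and closed manifolds have no boundary terms), so its minimal and maximal closures agree and the closure $D_b$ is selfadjoint on $L^2(\sE_b)$. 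Gårding's inequality and elliptic regularity on $M_b$ give control of $\Tex{Dom}(D_b)$ in terms of the first Sobolev space.

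Next I would assemble these fiberwise operators. Using the local triviality of $\pi$, over a chart $U \su B$ with $\pi^{-1}(U) \cong U \ti F$ the correspondence $X$ restricts to (a completion of) $C^\infty_c(U) \ot C^\infty(F, E|_F)$-type sections, and $\sD$ can be written as a $b$-dependent family $\sD_b$ of first-order differential operators on $F$ whose coefficients depend smoothly on $b$. The standard criterion for selfadjointness and regularity of an unbounded operator $D$ on a Hilbert $C^*$-module is that $(D \pm i)$ have dense range; I would verify this by showing that for $s \in \sE^c$ supported in $\pi^{-1}(U)$ one can solve $(D_0 \pm i) t = s$ approximately by solving fiberwise $(\sD_b \pm i) t_b = s_b$ on each fiber (possible by fiberwise essential selfadjointness) and checking that $b \mapsto t_b$ is again a smooth compactly supported section — this smoothness in the parameter $b$ is where ellipticity on the compact fiber $F$ and smooth dependence of coefficients are used, via elliptic regularity with parameters. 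A partition of unity on $B$ subordinate to trivializing charts then globalizes the statement, giving density of the range of $(D \pm i)$ on all of $X$, hence selfadjointness and regularity.

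The main obstacle I anticipate is the uniformity/parameter-dependence step: passing from fiberwise invertibility of $(\sD_b \pm i)$ to a bounded adjointable inverse of $(D \pm i)$ on the Hilbert module $X$, controlled uniformly (locally) in $b$. One must be careful that the fiberwise resolvents $(\sD_b \pm i)^{-1}$ patch into a genuine element of $\mathbb{L}(X)$, i.e. that norms do not blow up as $b$ varies within a chart and that the family is continuous in the appropriate operator topology — this is plausible because the fiber is compact and the symbol is invertible, giving a lower bound on $|\sD_b \pm i|$ that is locally uniform in $b$, but it requires care with the non-compactness of $B$ (handled by working locally and using that adjointability is a local condition over $B$ in the sense of $C_0(B)$-linearity) and with the fact that $\sD$ is not assumed to have finite propagation speed (so one cannot invoke hyperbolic-equation arguments and must rely purely on the elliptic-resolvent estimates on the compact fibers). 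A clean way to package this is to observe that $D$ is, up to the identifications above, a ``$C_0(B)$-family'' in the sense that it commutes with the $C^\infty_c(B)$-action, and to invoke (or reprove) the principle that a symmetric $C_0(B)$-linear operator which is fiberwise essentially selfadjoint with locally uniform resolvent bounds is selfadjoint and regular on $X$.
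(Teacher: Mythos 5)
There is a genuine gap: your argument leans on ellipticity at every stage (fiberwise elliptic regularity, G\aa rding, elliptic regularity with parameters, a symbol lower bound for the resolvents), but Proposition~\ref{p:regular} assumes only that $D_0$ is \emph{symmetric}; vertical ellipticity is not a hypothesis here (it only enters later, for compactness of the resolvent). So as written your proof establishes a special case, not the stated proposition. Note also that the ingredient you discard --- the hyperbolic/finite-propagation-speed argument --- is exactly what is available and needed fiberwise: on the compact fiber $M_b$ any symmetric first-order differential operator automatically has finite propagation speed, hence is essentially selfadjoint on $L^2(\sE_b)$ without any ellipticity (\cite[Corollary 10.2.6]{HR00}); the paper's caveat about propagation speed concerns the operator on the non-compact total space $M$, not its restrictions to the fibers.

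The second problem is that the step you defer --- ``invoke (or reprove) the principle that a symmetric $C_0(B)$-linear operator which is fiberwise essentially selfadjoint with locally uniform resolvent bounds is selfadjoint and regular'' --- is precisely the crux, and your sketch of it (dense range of $D\pm i$ by solving fiberwise and checking smooth dependence on $b$ of the nonlocal resolvents, plus handling that $E$ need not trivialize over $\pi^{-1}(U)$) is the hard, unproved part. The paper's proof avoids all of this by citing the local--global principle of Pierrot and Kaad--Lesch (\cite[Theorem 1.18]{Pie06}, \cite[Theorem 5.8]{KL12}): for a closed symmetric operator on a Hilbert $C_0(B)$-module it suffices that every localization $D\hot 1$ at a pure state $\mathrm{ev}_b$ be selfadjoint --- no uniformity or continuity in $b$ is required --- and via Lemma~\ref{l:uniloc} each localization is identified with the closure of a symmetric first-order operator on the compact manifold $M_b$, which is selfadjoint by \cite{HR00}. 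Your route could likely be completed when vertical ellipticity is assumed, at the cost of a genuine parameter-dependent elliptic argument, but it neither covers the proposition in its stated generality nor supplies the fiberwise-to-global passage that carries the actual weight of the proof.
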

\begin{proof}
By \cite[Theorem 1.18]{Pie06} and \cite[Theorem 5.8]{KL12} it suffices to verify that the induced unbounded operator (the localization)
\[
D \hot 1 : \Tex{Dom}(D \hot 1) \to X \hot_{C_0(B)} \CC
\]
is selfadjoint for all points $b \in B$ (recall that any pure state on $C_0(B)$ is of the form $\Tex{ev}_b : C_0(B) \to \CC$ for some $b \in B$). Moreover, we recall that the image of the map
\[
\sE^c \to X_b \q s \mapsto s \hot 1
\]
is a core for $D \hot 1$ for all $b \in B$. We denote this core by $\sE^c \ot_{C^\infty_c(B)} \CC \subseteq X_b$. 

Let thus $b \in B$ be fixed. Under the identification $X_b \cong L^2(\sE_b)$ (of Lemma \ref{l:uniloc}) we have that the core $\sE^c \ot_{C^\infty_c(B)} \CC$ corresponds to $\sE_b \su L^2(\sE_b)$ and that $D_0 \ot 1 : \sE^c \ot_{C^\infty_c(B)} \CC \to X_b$ corresponds to a first-order symmetric differential operator $(D_0)_b : \sE_b \to L^2(\sE_b)$. Since $M_b$ is {\it compact} we have that the closure $D_b := \ov{(D_0)_b}$ is selfadjoint, see \cite[Corollary 10.2.6]{HR00}. Moreover, we obtain that $D \hot 1$ agrees with $D_b$ under the unitary isomorphism of Lemma \ref{l:uniloc}. But this means that the localization $D \hot 1$ is selfadjoint for all $b \in B$ and the proposition is therefore proved.
\end{proof}

\subsection{Restriction to an open subset}
We continue in this subsection under the assumption that $\sD : \sE \to \sE$ is symmetric. We thus know from Proposition \ref{p:regular} that the closure of $D_0$, $D := \ov{D_0} : \Tex{Dom}(D) \to X$ is a selfadjoint and regular unbounded operator. We are now going to provide a local criteria that will later on allow us to verify the local compactness of the resolvent of $D$ (under an extra ellipticity condition). 

\begin{rem}
One might of course believe that the local compactness of the resolvent of $D : \Tex{Dom}(D) \to X$ would follow immediately from the compactness of the resolvents of all the localized operators $D_b : \Tex{Dom}(D_b) \to L^2(\sE_b)$, $b \in B$. This kind of argumentation is however erroneous. Indeed, consider the simple case where both the fiber bundle $\pi : M \to B$ and the vector bundle $E \to M$ are trivial. In this case we may identify $X$ with the standard module $X \cong C_0(B) \hot L^2( F )^{\op k}$ and the compactness of the resolvents of all the localized operators $D_b$ amounts to the {\em pointwise} compactness of $(i + D)^{-1}$. However, the compact operators on $X$ are given by the {\em operator norm continuous} maps $B \to \mathbb{K}( L^2(F)^{\op k})$ which vanish at infinity, see \cite[Lemma 4]{Kas80a}. The selfadjointness and regularity of $D$ only implies that the resolvent $(i + D)^{-1} : B \to \mathbb{L}( L^2(F)^{\op k})$ is continuous with respect to the {\em $*$-strong topology}. 

In general, the task of proving the local compactness of the resolvent of $D$ is also made more complicated by the fact that our vector bundle $E \to M$ need not be trivial over open subsets of the form $\pi^{-1}(V) \subseteq M$.
%
%
\end{rem}

%

For any open subset $U \su M$ we let $\sE_U := \Ga^\infty(U,E|_U)$ denote the smooth sections of the restriction of $E \to M$ to $U$. The notation $\sE_U^c := \Ga^\infty_c(U,E|_U)$ refers to the smooth compactly supported sections. We define the $C^*$-correspondence $X_U$ from $C_0(U)$ to $C_0(\pi(U))$ as the completion of $\sE_U^c$ with respect to (the norm coming from) the inner product
\[
\inn{s,t}_{X_U} := \inn{i_U(s),i_U(t)}_X \in C_0\big( \pi(U) \big) \q s,t \in \sE_U^c \, ,
\]
where $i_U : \sE_U^c \to \sE^c$ denotes the inclusion given by extension by zero. We remark that the bimodule structure on $X_U$ is induced by the $C_c^\infty(U)$-$C_c^\infty(\pi(U))$-bimodule structure on $\sE_U^c$. Our first-order differential operator $\sD : \sE \to \sE$ then restricts to a first-order differential operator $\sD_U : \sE_U \to \sE_U$ which we may promote to a {\it symmetric} unbounded operator 
\[
(D_U)_0 : \sE^c_U \to X_U \, .
\]
We emphasize that the closure $D_U := \ov{ (D_U)_0 } : \Tex{Dom}(D_U) \to X_U$ need {\it not} be selfadjoint. 

We equip $\Tex{Dom}(D_U)$ with the structure of a Hilbert $C^*$-module over $C_0(\pi(U))$ by defining the inner product
\[
\inn{s,t}_{D_U} := \inn{s,t}_{X_U} + \inn{D_U(s), D_U(t)}_{X_U} \, ,
\]
and the right action induced by the right action of $C_0(\pi(U))$ on $X_U$. The inclusion $i_{D_U} : \Tex{Dom}(D_U) \to X_U$ is then a bounded operator between the two Hilbert $C^*$-modules. Remark that the existence of an adjoint to this inclusion is equivalent to the {\it regularity} of $D_U$ so for the moment we only know that the inclusion is bounded.

We denote the left actions by
\[
m : C_0(M) \to \mathbb{L}(X) \q \Tex{and} \q m_U : C_0(U) \to \mathbb{L}(X_U) \, .
\]

Our localization result for resolvents can now be stated and proved:

\begin{prop}\label{p:localcompactI}
Suppose that, for each $p \in M$ we may find an open subset $U \su M$ with $p \in U$ such that the bounded operator
\[
\begin{CD}
\Tex{Dom}(D_U) @>{i_{D_U}}>> X_U @>{m_U(\chi)}>> X_U
\end{CD}
\]
is compact for all $\chi \in C_c^\infty(U)$. Then we have that
\[
m(f) \cd (i + D)^{-1} : X \to X
\]
is compact for all $f \in C_c^\infty(M)$.
\end{prop}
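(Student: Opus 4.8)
The plan is to reduce the global compactness statement to the local hypothesis by means of a partition of unity, together with two standard algebraic facts about Hilbert $C^*$-modules: first, that the set of $g \in C_0(M)$ for which $m(g)(i+D)^{-1}$ is compact is norm-closed and closed under multiplication by elements of $C_0(M)$; and second, that compactness of $m_U(\chi) i_{D_U}$ on $\operatorname{Dom}(D_U)$ transfers, via the inclusion-by-extension-by-zero $i_U : \sE^c_U \to \sE^c$, to compactness of the corresponding operator on the global module $X$ restricted to the range of functions supported in $U$. Since $f \in C^\infty_c(M)$ has compact support, finitely many of the open sets $U$ from the hypothesis cover $\supp(f)$, so it suffices to prove the claim when $f$ is itself supported in one such $U$; then by the first fact above (using $f = f \cd \chi$ for a suitable $\chi \in C^\infty_c(U)$) it is enough to show $m(\chi)(i+D)^{-1}$ is compact for all $\chi \in C^\infty_c(U)$.

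First I would set up the comparison between the restricted operator $D_U$ on $X_U$ and the global operator $D$ on $X$. The key point is that for $\chi \in C^\infty_c(U)$ the operator $m(\chi)(i+D)^{-1} : X \to X$ factors through $X_U$: one writes $(i+D)^{-1}$ as a bounded map $X \to \operatorname{Dom}(D)$, notes that multiplication by $\chi$ (or a slightly larger bump function) lands in $\sE^c_U$ after extension by zero, and observes that $D$ and $D_U$ agree on sections supported in $U$ since $\sD$ is a differential operator and hence local. Concretely, choosing $\psi \in C^\infty_c(U)$ with $\psi \equiv 1$ on $\supp(\chi)$, the composition
\[
X \xrightarrow{\ (i+D)^{-1}\ } \operatorname{Dom}(D) \xrightarrow{\ m(\psi)\ } \operatorname{Dom}(D_U) \xrightarrow{\ i_{D_U}\ } X_U \xrightarrow{\ m_U(\chi)\ } X_U \xrightarrow{\ i_U\ } X
\]
reproduces $m(\chi)(i+D)^{-1}$; locality of $\sD$ is what makes the middle map $m(\psi) : \operatorname{Dom}(D) \to \operatorname{Dom}(D_U)$ well-defined and bounded. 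By hypothesis the composite $m_U(\chi) \circ i_{D_U}$ is compact, and compact operators form an ideal, so the whole composition is compact as an operator $X \to X$. Finally, running this for every $\chi$ in a partition of unity subordinate to a finite cover of $\supp(f)$ and summing gives that $m(f)(i+D)^{-1}$ is a finite sum of compact operators, hence compact.

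The main obstacle I anticipate is making the factorization through $X_U$ rigorous at the level of Hilbert $C^*$-modules rather than Hilbert spaces: one must check that extension by zero $i_U$ is an isometry $X_U \to X$ compatible with the left actions and with the differential operators, that $m(\psi)$ genuinely maps $\operatorname{Dom}(D)$ into $\operatorname{Dom}(D_U)$ (this uses that $\psi \cd s$ has support in $U$ and that $\sD(\psi \cd s) = \psi \cd \sD(s) + \sigma_{\sD}(d\psi)(s)$ stays in $\sE^c_U$), and that these identifications intertwine $D$ with $D_U$ on the relevant subspace. There is also a subtlety that $D_U$ need not be self-adjoint or regular — but this is harmless, since we never invert $D_U$; we only use $(i+D)^{-1}$ on the global side, where self-adjointness and regularity are already guaranteed by Proposition \ref{p:regular}, and the restricted operator enters only through its graph norm via the bounded inclusion $i_{D_U}$. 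The remaining details — boundedness of the cutoff maps in graph norm, norm-closedness of the set of ``good'' functions $g$ — are routine.
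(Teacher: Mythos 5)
Your overall strategy --- reduce to $f$ supported in a single chart $U$, then factor $m(\chi)(i+D)^{-1}$ through the restricted module and invoke the local hypothesis --- is the same as the paper's, but the step you dismiss as routine is exactly where the real work lies, and as written there is a genuine gap. For operators between Hilbert $C^*$-modules, ``compact operators form an ideal'' only in an asymmetric sense: if $K$ is compact then $SK$ is compact for any bounded module map $S$, but $KT$ is only guaranteed compact when $T$ is bounded \emph{adjointable}; for a merely bounded, non-adjointable $T$ one cannot rewrite $\theta_{\xi,\eta}T$ as a rank-one operator. In your composition the compact piece $m_U(\chi)\circ i_{D_U}$ is \emph{precomposed} with the cutoff map $m(\psi)(i+D)^{-1}\colon X\to \mathrm{Dom}(D_U)$, where $\mathrm{Dom}(D_U)$ carries the graph inner product. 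Boundedness of this map in graph norm is indeed the easy estimate you indicate, but adjointability is not automatic, precisely because $D_U$ is not known to be selfadjoint or regular --- so the non-regularity of $D_U$ is not ``harmless'' in the way you claim. The paper devotes an entire step of its proof to this point: it shows that the cutoff $r_U(\sqrt f)\colon \mathrm{Dom}(D)\to \mathrm{Dom}(D_U\otimes 1)$ is adjointable by exhibiting the adjoint explicitly, a formula involving $(i+D)^{-1}$, $(1+D^2)^{-1}$ and the bounded commutator $\delta(\sqrt f)$, and it explicitly warns that right multiplication by non-adjointable maps need not preserve compactness. Without this (or some substitute), your final ``hence the whole composition is compact'' does not follow.

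A second, more minor point you gloss over is the mismatch of base algebras: $X_U$ and $\mathrm{Dom}(D_U)$ are Hilbert modules over $C_0(\pi(U))$, while $X$ and $\mathrm{Dom}(D)$ are modules over $C_0(B)$, so the hypothesis gives compactness relative to $C_0(\pi(U))$ whereas the conclusion requires compactness relative to $C_0(B)$. This is repairable --- either by noting that $C_0(\pi(U))\subseteq C_0(B)$ is an ideal, or, as the paper does in its first two steps, by forming the interior tensor product $X_U\otimes_{C_0(\pi(U))}C_0(B)$, using that $C_0(\pi(U))$ acts on $C_0(B)$ by compact operators, and then observing that the isometry $j$ into $X$ given by extension by zero only multiplies the compact piece on the \emph{left}, where adjointability is not needed --- but it must be addressed rather than absorbed into ``the details are routine.''
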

\begin{proof}
Let us fix an $f \in C_c^\infty(M)$. Without loss of generality we may assume that $\Tex{supp}(f) \su U$ for some open subset $U \su M$ satisfying the assumptions to this proposition. Moreover, we may assume that $f \geq 0$ and that $f$ has a smooth square root.

The proof runs in four steps.

\begin{enumerate}
\item Consider the interior tensor product $X_U \hot_{C_0(\pi(U))} C_0(B)$ where $C_0(\pi(U))$ acts on $C_0(B)$ via the $*$-homomorphism $C_0(\pi(U)) \to C_0(B)$ given by extension by zero. We start by showing that the composition
\[
\begin{CD}
\Tex{Dom}(D_U \hot 1) @>{i_{D_U \hot 1}}>> X_U \hot_{C_0(\pi(U))} C_0(B) @>{m_U(\sqrt{f}) \hot 1}>> X_U \hot_{C_0(\pi(U))} C_0(B)
\end{CD}
\]
is a compact operator. To this end, we remark that $\Tex{Dom}(D_U \hot 1)$ is unitarily isomorphic to $\Tex{Dom}(D_U) \hot_{C_0(\pi(U))} C_0(B)$ and that the map $i_{D_U \hot 1} : \Tex{Dom}(D_U \hot 1) \to X_U \hot_{C_0(\pi(U))} C_0(B)$ corresponds to $i_{D_U} \hot 1 : \Tex{Dom}(D_U) \hot_{C_0(\pi(U))} C_0(B) \to X_U \hot_{C_0(\pi(U))} C_0(B)$ under this unitary isomorphism. It thus suffices to show that the bounded operator
\[
\big( m_U(\sqrt{f}) \cd i_{D_U} \big) \hot 1 : \Tex{Dom}(D_U) \hot_{C_0(\pi(U))} C_0(B) \to X_U \hot_{C_0(\pi(U))} C_0(B)
\]
is compact. But this follows from \cite[Proposition 4.7]{Lan95} since $m_U(\sqrt{f}) \cd i_{D_U} : \Tex{Dom}(D_U) \to X_U$ is compact by assumption and since $C_0(\pi(U))$ acts on $C_0(B)$ by compact operators.
\item The map $\sE_U^c \ot_{C_c^\infty(\pi(U))} C_c^\infty(B) \to \sE^c$ defined by $s \ot g \mapsto i_U(s) \cd (g \ci \pi)$ induces a $C_0(B)$-linear isometry $j : X_U \hot_{C_0(\pi(U))} C_0(B) \to X$ and we may thus conclude (using $(1)$) that the composition
\[
\begin{CD}
\Tex{Dom}(D_U \hot 1) @>{( m_U(\sqrt{f}) \hot 1) \cd i_{D_U \hot 1}}>> X_U \hot_{C_0(\pi(U))} C_0(B) 
@>{j}>> X
\end{CD}
\]
is a compact operator. Remark that we do not need 
\[
j : X_U \hot_{C_0(\pi(U))} C_0(B) \to X
\]
to be adjointable since the compact operators are compatible with {\it left multiplication} by bounded operators that are linear over the base (the corresponding statement for {\it right multiplication} need not be true).
\item The multiplication operator $r_U(\sqrt{f}) : \sE^c \to \sE^c_U \ot_{C_c^\infty(\pi(U))} C_c^\infty(B)$ induces a bounded {\it adjointable} operator
\[
r_U(\sqrt{f}) : \Tex{Dom}(D) \to \Tex{Dom}(D_U \hot 1) \, .
\]

To prove this claim, we recall that $\de( \sqrt{f}) : X \to X$ denotes the bounded adjointable extension of the commutator $[D_0, m(\sqrt{f})] : \sE^c \to X$. 

The fact that $r_U(\sqrt{f})$ is bounded then follows since
\[
\begin{split}
\| r_U(\sqrt{f})(s) \|_{\Tex{Dom}(D_U \hot 1)} & = \| (i + D) (\sqrt{f} \cd s) \|_X \\
& \leq \| m(\sqrt{f}) \|_\infty \cd \| s \|_{\Tex{Dom}(D)} + \| \de( \sqrt{f}) \|_\infty \cd \| s \|_X
\end{split}
\]
for all $s \in \sE^c$.

It can then be verified by a direct computation that the adjoint of $r_U(\sqrt{f})$ is given by the expression:
\[
\begin{split}
r_U(\sqrt{f})^* 
& = m(\sqrt{f}) \cd j  - (i + D)^{-1} \cd \de(\sqrt{f}) \cd j  \\
& \qqq - (1 + D^2)^{-1} \cd \de(\sqrt{f}) \cd (i + D) \cd j
\end{split}
\]
on the dense subspace of $\Tex{Dom}(D_U \hot 1)$ provided by the tensor product $\sE^c_U \ot_{C_c^\infty(\pi(U))} C_c^\infty(B)$.

This proves the claim.
\item We end the proof of the proposition by concluding that the composition
\[
\begin{CD}
\Tex{Dom}(D) @>{i_D}>> X @>{m(f)}>> X
\end{CD}
\]
is a compact operator. Indeed, this composition can be rewritten as the composition
\[
\begin{CD}
\Tex{Dom}(D) @>{r_U(\sqrt{f})}>> \Tex{Dom}(D_U \hot 1) @>{i_{D_U \hot 1}}>> X_U \hot_{C_0(\pi(U))} C_0(B) \\ 
@>{m_U(\sqrt{f}) \hot 1}>> X_U \hot_{C_0(\pi(U))} C_0(B) @>{j}>> X \qqq
\end{CD}
\]
but this latter composition is compact by a combination of $(2)$ and $(3)$.
\end{enumerate}

\end{proof}

\subsection{Compactness of local resolvents}
Throughout this subsection we will suppose that the first-order differential operator $\sD : \sE \to \sE$ is vertically elliptic and symmetric.

Let us fix an open subset $U \su M$ such that the following holds:

\begin{ass}\label{a:localI}
\begin{enumerate}
\item There exists a diffeomorphism $\psi : U \to \pi(U) \ti \mathbb{B}_\de(0)$ such that $p_1 \ci \psi = \pi$ where $p_1 : \pi(U) \ti \mathbb{B}_\de(0) \to \pi(U)$ denotes the projection onto the first component and where $\mathbb{B}_\de(0) \su \rr^{\dim (F)}$ denotes the open ball with center $0$ and radius $\de > 0$.
\item There exists a unitary smooth trivialization $\phi: E|_U \to U \ti \CC^k$ of the restriction of the vector bundle $E \to M$ to $U$.
\end{enumerate}
\end{ass}

We define the vertical coordinates associated to $\psi$ by $y_i := r_i \ci p_2 \ci \psi : U \to \rr$, $i = 1,\ldots, \dim (F)$, where $r_i : \mathbb{B}_\de(0) \to \rr$ are restrictions of the standard coordinates on $\rr^{\dim (F)}$ and $p_2 : \pi(U) \ti \mathbb{B}_\de(0) \to \mathbb{B}_\de(0)$ is the projection onto the second component. We then have the associated smooth map
\[
g^V : U \to \Tex{GL}_+( \rr^{\dim (F)}) \q g^V_{ij} := \inn{\pa/\pa y_i, \pa/\pa y_j}_V \, .
\]

The next lemma follows by a straightforward computation of inner products.

\begin{lma}
The $C^\infty_c(\pi(U))$-module isomorphism
\[
\al : \sE^c_U \to C_c^\infty\big( \pi(U) \ti \mathbb{B}_\de(0) \big)^{\op k}
\]
defined by $\al(s) := \big( (\phi \ci s) \cd \Tex{det}(g^V)^{1/4} \big) \ci \psi^{-1}$ induces a unitary isomorphism of Hilbert $C^*$-modules over $C_0( \pi(U))$:
\[
\al : X_U \to C_0\big( \pi(U) \big) \hot L^2\big( \mathbb{B}_\de(0) \big)^{\op k} \, .
\]
\end{lma}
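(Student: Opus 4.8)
The plan is to verify that the map $\al$, which by construction is a bijection of the dense submodule $\sE_U^c \su X_U$ onto the dense submodule $C_c^\infty\big(\pi(U)\ti\mathbb{B}_\de(0)\big)^{\op k}$ of $C_0(\pi(U)) \hot L^2(\mathbb{B}_\de(0))^{\op k}$, is moreover $C_c^\infty(\pi(U))$-linear and isometric for the two Hilbert module inner products; the claimed unitary isomorphism then follows by unique continuous extension. Module-linearity is immediate: $\al$ is the composite of post-composition with the bundle isomorphism $\phi$, multiplication by the strictly positive smooth function $\det(g^V)^{1/4}$ (well defined and smooth since $g^V$ takes values in the symmetric positive-definite matrices), and pre-composition with the diffeomorphism $\psi^{-1}$; since $p_1 \ci \psi = \pi$ one has $\pi \ci \psi^{-1}(b,y) = b$, so the right action of $C_c^\infty(\pi(U))$ — which on $\sE_U^c$ is $(s \cd f)(x) = s(x) f(\pi(x))$ and on the target is multiplication in the first variable — is intertwined.

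The content of the lemma is the identity $\inn{\al(s),\al(t)}(b) = \inn{s,t}_{X_U}(b)$ for all $b \in \pi(U)$ and $s,t \in \sE_U^c$, which is a fibrewise change of variables. Unwinding the definitions, $\inn{s,t}_{X_U} = \rho\big(\inn{i_U(s),i_U(t)}_{\sE}\big)$, and since $\inn{i_U(s),i_U(t)}_{\sE}$ is supported in $U$ and agrees there with $\inn{s,t}_{\sE_U}$, we obtain $\inn{s,t}_{X_U}(b) = \int_{M_b \cap U} \inn{s,t}_{\sE_U}\big|_{M_b}\, d\mu_b$. Using that $\phi$ is a \emph{unitary} trivialization, $\inn{s,t}_{\sE_U} = \inn{\phi\ci s,\phi\ci t}_{\CC^k}$ pointwise on $U$. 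Now transport the integral through $\psi$: this diffeomorphism maps $M_b \cap U$ onto $\{b\}\ti\mathbb{B}_\de(0)$, and, via the isomorphism $di_b$ identifying vertical fields with fields on the fibre, in the vertical coordinates $y_1,\dots,y_{\dim(F)}$ the fibre metric $\inn{\cd,\cd}_b$ has Gram matrix $g^V(\cd)$; hence the Riemannian measure $\mu_b$ pulls back under $\psi^{-1}(b,\cd)$ to $\det\big(g^V\ci\psi^{-1}(b,\cd)\big)^{1/2}\,dy$, with $dy$ Lebesgue measure on $\mathbb{B}_\de(0)$. Substituting, and recalling $\al(s)(b,y) = (\phi\ci s)(\psi^{-1}(b,y))\,\det(g^V)^{1/4}(\psi^{-1}(b,y))$, the two factors $\det(g^V)^{1/4}$ coming from $\al(s)$ and the conjugate of $\al(t)$ combine to exactly the density $\det(g^V)^{1/2}$, so that $\int_{\mathbb{B}_\de(0)} \inn{\al(s)(b,\cd),\al(t)(b,\cd)}_{\CC^k}\,dy = \inn{s,t}_{X_U}(b)$, as required.

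Finally, $\sE_U^c$ is dense in $X_U$ by construction, and $C_c^\infty\big(\pi(U)\ti\mathbb{B}_\de(0)\big)^{\op k}$ is dense in $C_0(\pi(U))\hot L^2(\mathbb{B}_\de(0))^{\op k}$; so the isometric, $C_c^\infty(\pi(U))$-linear surjection $\al$ extends uniquely to an isometric $C_0(\pi(U))$-linear map with dense range, whose range is therefore closed, hence all of the target. An inner-product-preserving surjective module map between Hilbert $C^*$-modules is automatically a unitary isomorphism (with adjoint $\al^{-1}$). The only step that is not purely formal is the measure-theoretic one — identifying $\mu_b$, which is attached to the restricted fibre metric through $di_b$, with $\det(g^V)^{1/2}\,dy$ in the coordinates $y_i$ — but this is just the standard description of the Riemannian volume density; I expect it, together with bookkeeping of the $\det(g^V)^{1/4}$ normalization that is designed precisely to absorb this Jacobian, to be the only (and minor) point requiring care.
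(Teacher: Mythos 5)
Your proof is correct and follows exactly the route the paper intends: the paper dismisses this lemma with ``follows by a straightforward computation of inner products,'' and your fibrewise change of variables --- identifying $\mu_b$ with $\det(g^V)^{1/2}\,dy$ in the coordinates $y_i$ so that the two $\det(g^V)^{1/4}$ factors absorb the Jacobian, followed by the density/extension argument --- is precisely that computation carried out in detail.
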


We are going to study the closed and symmetric unbounded operator 
\[
\begin{split}
D^\al := \al \ci D_U \ci \al^{-1} & : \Tex{Dom}(D^\al) = \al\big( \Tex{Dom}(D_U) \big) \\ 
& \q \to
C_0\big( \pi(U) \big) \hot L^2\big( \mathbb{B}_\de(0) \big)^{\op k}  \, .
\end{split}
\]
We immediately remark that $C_c^\infty( \pi(U) \ti \mathbb{B}_\de(0))^{\op k}$ is a core for $D^\al$. Moreover, we can find smooth maps $A_j, B : \pi(U) \ti \mathbb{B}_\de(0) \to M_k(\CC)$, $j = 1,\ldots,\dim (F)$, such that
\[
\begin{split}
D^\al_0 = \sum_{j = 1}^{\dim ( F )} A_j \cd \frac{\pa}{\pa r_j} + B & : C_c^\infty( \pi(U) \ti \mathbb{B}_\de(0))^{\op k} \\
& \q \to C_0\big( \pi(U) \big) \hot L^2\big( \mathbb{B}_\de(0) \big)^{\op k}  \, ,
\end{split}
\]
where $D^\al_0$ denotes the restriction of $D^\al$ to the core $C_c^\infty( \pi(U) \ti \mathbb{B}_\de(0))^{\op k}$. For each $x \in \pi(U)$ we define the first-order differential operator
\[
\sD^\al_x := \sum_{j = 1}^{\dim ( F )} A_j(x,\cd) \cd \frac{\pa}{\pa r_j} + B(x,\cd)
: C^\infty( \mathbb{B}_\de(0))^{\op k} \to C^\infty( \mathbb{B}_\de(0))^{\op k} \, .
\]
The vertical ellipticity of $\sD : \sE \to \sE$ then implies that $\sD^\al_x$ is elliptic for all $x \in \pi(U)$. For each $x \in \pi(U)$, we let
\[
D^\al_x : \Tex{Dom}(D^\al_x) \to L^2( \mathbb{B}_\de(0))^{\op k}
\]
denote the closure of the unbounded operator $(D^\al_x)_0 : C^\infty_c( \mathbb{B}_\de(0))^{\op k} \to L^2( \mathbb{B}_\de(0))^{\op k}$ induced by the first-order differential operator $\sD^\al_x$.

By passing to a smaller open subset $U' \su U$ if necessary we may assume that the following holds:

\begin{ass}\label{a:localII}
\begin{enumerate}
\item For each $x \in \pi(U)$, the smooth maps $A_j(x,\cd)$, $B(x,\cd) : \mathbb{B}_\de(0) \to M_k(\CC)$, $j = 1,\ldots, \dim(F)$, are bounded. 
\item The maps $A_j, B : \pi(U) \to C_b( \mathbb{B}_\de(0), M_k(\CC))$, $j = 1,\ldots, \dim(F)$, are continuous, where $C_b(\mathbb{B}_\de(0), M_k(\CC))$ is equipped with the supremum norm. 
\item $D^\al$ satisfies the following uniform G\aa rding's inequality: There exists a constant $C > 0$ such that
\[
\| \xi \|_2 + \| \sD^\al_x(\xi) \|_2  
\geq C \cd \big( \| \xi \|_2 + \sum_{j = 1}^{\dim(F)}\| \pa \xi/\pa r_j \|_2 \big) \, ,  
\]
for all $\xi \in C_c^\infty(\mathbb{B}_\de(0))^{\op k}$ and all $x \in \pi(U)$, where $\| \cd \|_2$ denotes the $L^2$-norm on $L^2\big( \mathbb{B}_\de(0) \big)^{\op k}$. Indeed, this is a consequence of the proof of the usual G\aa rding's inequality, see for example \cite[Theorem 10.4.4]{HR00}.
\end{enumerate}
\end{ass}

Let us apply the notation
\[
\Na : \Tex{Dom}(\Na) \to L^2\big( \mathbb{B}_\de(0) \big)^{\op \dim(F) \cd k}
\]
for the closure of the gradient
\[
\Na_0 : C_c^\infty(\mathbb{B}_\de(0))^{\op k} \to L^2\big( \mathbb{B}_\de(0) \big)^{\op \dim(F) \cd k} \, \, \,
\Na_0( \xi) = \ma{c}{ \pa \xi/ \pa r_1 \\ \vdots \\ \pa \xi/ \pa r_{\dim(F)}} \, .
\]

The above assumptions imply the following:

\begin{lma}\label{l:domain}
We have that $\Tex{Dom}(\Na) = \Tex{Dom}( D^\al_x )$ and there exist constants $C_0, C_1 > 0$ such that
\[
C_1 \cd \| \xi \|_\Na \geq \| \xi \|_{D^\al_x} \geq C_0 \cd \| \xi \|_\Na \, ,
\]
for all $x \in \pi(U)$.  Moreover, we have that
\[
D^\al_x(\xi) = A(x,\cd) \cd \Na(\xi) + B(x,\cd)(\xi) \, ,
\]
for all $\xi \in \Tex{Dom}(\Na)$ and all $x \in \pi(U)$, where
\[
\begin{split}
A(x,\cd) & := (A_1(x,\cd),\ldots, A_{\dim(F)}(x,\cd)) \\ 
& \qq : L^2\big( \mathbb{B}_\de(0) \big)^{\op \dim(F) \cd k} \to L^2 \big( \mathbb{B}_\de(0) \big)^{\op k} \, .
\end{split}
\]
\end{lma}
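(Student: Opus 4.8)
The plan is to prove the three assertions of Lemma~\ref{l:domain} in sequence, bootstrapping from the uniform G\aa rding inequality in Assumption~\ref{a:localII}(3) and the boundedness/continuity statements in Assumption~\ref{a:localII}(1)--(2). First I would establish the \emph{algebraic} identity $D^\al_x(\xi) = A(x,\cd) \cd \Na(\xi) + B(x,\cd)(\xi)$ for all $\xi \in C_c^\infty(\mathbb{B}_\de(0))^{\op k}$; this is immediate from the explicit form of $(D^\al_x)_0$ and the definition of $\Na_0$, together with Assumption~\ref{a:localII}(1) which ensures that the matrix-valued coefficients $A_j(x,\cd)$ and $B(x,\cd)$ are bounded and hence define bounded multiplication operators on $L^2(\mathbb{B}_\de(0))^{\op k}$. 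One then extends the identity to all of $\Tex{Dom}(\Na)$ by continuity, once the domain inclusion is known.

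Second, I would prove the norm equivalence $C_1 \cd \| \xi \|_\Na \geq \| \xi \|_{D^\al_x} \geq C_0 \cd \| \xi \|_\Na$ on the common core $C_c^\infty(\mathbb{B}_\de(0))^{\op k}$, with constants independent of $x \in \pi(U)$. The lower bound $\| \xi \|_{D^\al_x} \geq C_0 \| \xi \|_\Na$ is precisely a repackaging of the uniform G\aa rding inequality: squaring and using that $\| \xi \|_{D^\al_x}^2 = \| \xi \|_2^2 + \| \sD^\al_x \xi \|_2^2$ while $\| \xi \|_\Na^2 = \| \xi \|_2^2 + \sum_j \| \pa \xi / \pa r_j \|_2^2$, one recovers $\| \xi \|_{D^\al_x} \gtrsim \| \xi \|_2 + \sum_j \| \pa \xi/\pa r_j\|_2 \gtrsim \| \xi \|_\Na$. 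The upper bound $\| \xi \|_{D^\al_x} \leq C_1 \| \xi \|_\Na$ follows from the triangle inequality applied to the identity from Step~1: $\| \sD^\al_x \xi \|_2 \leq \big(\sup_x \max_j \| A_j(x,\cd) \|_\infty\big) \| \Na \xi \|_2 + \big(\sup_x \| B(x,\cd)\|_\infty\big) \| \xi \|_2$, and these suprema are finite because $\pi(U)$ may be taken with compact closure (or, more carefully, because of the continuity in Assumption~\ref{a:localII}(2) combined with local compactness — one should restrict attention to relatively compact $\pi(U)$ or invoke the boundedness built into the assumption directly).

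Third, the domain equality $\Tex{Dom}(\Na) = \Tex{Dom}(D^\al_x)$ follows formally from the norm equivalence on the common core: since both $\Na$ and $D^\al_x$ are closures of their restrictions to $C_c^\infty(\mathbb{B}_\de(0))^{\op k}$, and the graph norms $\|\cd\|_\Na$ and $\|\cd\|_{D^\al_x}$ are equivalent on that core, their completions — which are exactly $\Tex{Dom}(\Na)$ and $\Tex{Dom}(D^\al_x)$ as Hilbert spaces with the graph inner products — coincide as subspaces of $L^2(\mathbb{B}_\de(0))^{\op k}$. At this point the identity $D^\al_x(\xi) = A(x,\cd) \Na(\xi) + B(x,\cd)(\xi)$ extends from the core to all of $\Tex{Dom}(\Na)$ by taking limits in the respective graph norms, using that $A(x,\cd)$ and $B(x,\cd)$ are bounded on $L^2$.

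The main obstacle I anticipate is the \emph{uniformity in $x$} of the constants. The plain G\aa rding inequality (as in \cite[Theorem 10.4.4]{HR00}) gives, for each fixed elliptic operator, constants depending a priori on that operator; the content of Assumption~\ref{a:localII}(3) is precisely that after shrinking $U$ one may choose a single constant $C$ working for all $x \in \pi(U)$, and the authors assert this is visible from the proof of the usual inequality. I would therefore treat the uniform G\aa rding inequality as a given (it is an assumption, after all) and focus the argument on cleanly deducing the norm equivalence and domain equality from it, being careful that the $\sup_x$ of the coefficient norms that appears in the upper bound is indeed finite — this is where Assumption~\ref{a:localII}(1)--(2) is used, possibly after passing to a relatively compact open subset of $\pi(U)$. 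Everything else is a routine manipulation of graph norms and closures.
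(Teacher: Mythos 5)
Your proof is correct and follows exactly the route the paper intends: the paper states Lemma \ref{l:domain} without proof as an immediate consequence of Assumption \ref{a:localII}, and your three steps (algebraic identity on the core, uniform two-sided graph-norm estimate via the uniform G\aa rding inequality and bounded coefficients, then domain equality as coincidence of closures from a common core with equivalent graph norms) are precisely the implicit argument. Your caveat about the uniformity of $\sup_{x \in \pi(U)} \| A_j(x,\cd)\|_\infty$ and $\sup_{x \in \pi(U)} \| B(x,\cd)\|_\infty$ is well taken and is covered by the paper's freedom to pass to a smaller open subset $U' \su U$ before imposing Assumption \ref{a:localII}.
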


Define the map
\[
R : \pi(U) \to \mathbb{L}\big( L^2(\mathbb{B}_\de(0))^{\op k} \big) \q R(x) := \big( 1 + (D^\al_x)^* D^\al_x \big)^{-1}
\]
as well as its square root $R^{1/2} : x \mapsto R(x)^{1/2}$, $x \in \pi(U)$. We remark that it follows by Lemma \ref{l:domain} that $\Tex{Im}( R^{1/2}(x)) = \Tex{Dom}(\Na)$ for all $x \in \pi(U)$ and that the map
\[
\begin{split}
& \Na \cd R^{1/2} : \pi(U) \to \mathbb{L}\big( L^2(\mathbb{B}_\de(0))^{\op k},  L^2(\mathbb{B}_\de(0))^{\op \dim(F) \cd k}\big) \\
& \Na \cd R^{1/2}(x) := \Na \cd  \big( 1 + (D^\al_x)^* D^\al_x \big)^{-1/2}
\end{split}
\]
is well-defined. Moreover, Lemma \ref{l:domain} implies that
\[
\sup_{x \in \pi(U)} \| \Na \cd R^{1/2}(x) \|_\infty < \infty \, ,
\]
where $\| \cd \|_\infty$ refers to the operator norm. 

\begin{lma}\label{l:opnorcon}
The maps $R : \pi(U) \to \mathbb{L}\big( L^2(\mathbb{B}_\de(0))^{\op k} \big)$ and $\Na \cd R : \pi(U) \to \mathbb{L}\big( L^2(\mathbb{B}_\de(0))^{\op k},  L^2(\mathbb{B}_\de(0))^{\op \dim(F) \cd k}\big)$ are continuous in operator norm.
\end{lma}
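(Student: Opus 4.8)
The plan is to prove the operator-norm continuity of $R$ and $\Na \cd R$ by first establishing the (strictly weaker) $*$-strong continuity of the resolvent family $x \mapsto R(x)$ and then upgrading to norm continuity using a local compactness/equicontinuity argument based on the uniform Gårding inequality in Assumption \ref{a:localII}(3). The key point to exploit is that $\mathbb{B}_\de(0) \su \rr^{\dim(F)}$ is a \emph{bounded} open set, so that Rellich's lemma applies uniformly: the inclusion $H^1_0(\mathbb{B}_\de(0))^{\op k} \into L^2(\mathbb{B}_\de(0))^{\op k}$ is compact, and hence by Lemma \ref{l:domain} each $R^{1/2}(x)$, having image $\Tex{Dom}(\Na) = H^1_0$-controlled, is a \emph{compact} operator on $L^2(\mathbb{B}_\de(0))^{\op k}$, with $\sup_{x} \| \Na \cd R^{1/2}(x) \|_\infty < \infty$ already recorded above. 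This compactness, uniform in $x$, is precisely what converts weak-type continuity into norm continuity.

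First I would establish $*$-strong continuity. Fix $x_0 \in \pi(U)$ and $\xi \in C_c^\infty(\mathbb{B}_\de(0))^{\op k}$; the resolvent identity gives
\[
R(x) - R(x_0) = R(x)\big( (D^\al_{x_0})^* D^\al_{x_0} - (D^\al_x)^* D^\al_x \big) R(x_0),
\]
and using the explicit form $D^\al_x(\xi) = A(x,\cd)\Na(\xi) + B(x,\cd)(\xi)$ from Lemma \ref{l:domain} together with the continuity of $x \mapsto A_j(x,\cd), B(x,\cd)$ into $C_b(\mathbb{B}_\de(0),M_k(\CC))$ from Assumption \ref{a:localII}(2), the difference $(D^\al_{x_0})^* D^\al_{x_0} - (D^\al_x)^* D^\al_x$ applied to $R(x_0)\eta$ tends to $0$ in $L^2$-norm for $\eta$ in a core, while being uniformly bounded relative to $\Na$; combined with $\sup_x \|\Na \cd R^{1/2}(x)\|_\infty < \infty$ this yields $R(x)\eta \to R(x_0)\eta$, i.e. strong continuity of $R$, and then of $R^{1/2}$ by the integral formula for square roots, and of $\Na \cd R$ by the same bookkeeping. (Self-adjointness of $(D^\al_x)^* D^\al_x$ makes $*$-strong the same as strong here.)

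Next I would upgrade to norm continuity. The family $\{R^{1/2}(x)\}_{x \in \pi(U)}$ consists of compact operators that are, by the uniform Gårding inequality, \emph{uniformly} bounded as maps into $H^1_0$-norm; equivalently, for every $\eps > 0$ there is a finite-rank projection $P_\eps$ on $L^2(\mathbb{B}_\de(0))^{\op k}$ — e.g. the spectral projection onto low eigenvalues of the Dirichlet Laplacian on $\mathbb{B}_\de(0)$ — with $\|(1-P_\eps) R^{1/2}(x)\|_\infty < \eps$ for all $x$ simultaneously, since $(1-P_\eps)$ is small on the unit ball of $H^1_0$ uniformly. Writing $R(x) = R^{1/2}(x)^* R^{1/2}(x)$ and inserting $P_\eps$, the "tails" are controlled uniformly in $x$ by $\eps$, while the finite-rank piece $P_\eps R(x) P_\eps$ depends norm-continuously on $x$ because strong convergence equals norm convergence on a fixed finite-dimensional range; a standard $3\eps$-argument then gives norm continuity of $R$. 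The same scheme, using $\sup_x \|\Na \cd R^{1/2}(x)\|_\infty < \infty$ to control $\Na \cd R = (\Na \cd R^{1/2})\cdot R^{1/2}$ and the uniform tail estimate on $R^{1/2}(x)$, gives norm continuity of $\Na \cd R$.

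The main obstacle is the uniformity of the tail estimate $\|(1-P_\eps)R^{1/2}(x)\|_\infty < \eps$ over all $x \in \pi(U)$ \emph{simultaneously}: this is exactly where Assumption \ref{a:localII}(3) (the uniform Gårding inequality) is indispensable — without it one only gets pointwise compactness of each $R(x)$, which (as the Remark preceding this subsection warns) does not suffice. A secondary technical care point is that $(D^\al_x)^* D^\al_x$ is a second-order operator whose coefficients involve derivatives of $A_j$, so one must be slightly careful to phrase all estimates in terms of first-order quantities and the closure $\Na$; writing everything through the first-order operator $D^\al_x$ and using $\Tex{Dom}((D^\al_x)^* D^\al_x) \su \Tex{Dom}(D^\al_x) = \Tex{Dom}(\Na)$ avoids having to differentiate the coefficients.
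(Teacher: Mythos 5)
Your overall architecture (strong continuity of the resolvent family plus a collectively compact family, upgraded to norm continuity by a uniform finite-rank tail estimate) is a legitimate strategy, but the step that carries all the analytic content --- strong continuity of $R$, and especially of $\Na \cd R$ --- is not actually established, and the argument you sketch for it would fail as written. The displayed resolvent identity $R(x)-R(x_0)=R(x)\big((D^\al_{x_0})^*D^\al_{x_0}-(D^\al_x)^*D^\al_x\big)R(x_0)$ is ill-defined: the image of $R(x_0)$ is $\mathrm{Dom}\big((D^\al_{x_0})^*D^\al_{x_0}\big)$, and nothing guarantees that this lies in $\mathrm{Dom}\big((D^\al_x)^*D^\al_x\big)$ for $x\neq x_0$; only the \emph{first-order} domains are identified (via Lemma \ref{l:domain}) with $\mathrm{Dom}(\Na)$, the second-order domains may vary with $x$. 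Moreover, even formally, the difference $(D^\al_{x_0})^*D^\al_{x_0}-(D^\al_x)^*D^\al_x$ is a differential operator whose coefficients involve the $r$-derivatives of $A_j(x,\cd)$, and Assumption \ref{a:localII}(2) only provides continuity of $x\mapsto A_j(x,\cd)$ and $x\mapsto B(x,\cd)$ in supremum norm, not of their derivatives; so the claim that this difference applied to $R(x_0)\eta$ ``tends to $0$ while being uniformly bounded relative to $\Na$'' has no basis in the stated hypotheses. You flag this issue and propose to rephrase everything through first-order quantities, but that rephrasing \emph{is} the missing proof: it must be done by pairing against vectors and moving one copy of $D^\al_y$ (resp.\ $D^\al_x$) to the other side of the inner product through the uniformly bounded operators $\Na \cd R^{1/2}(y)$ and $\big(D^\al_y R^{1/2}(y)\big)^*$. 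The same gap recurs, untreated, in the assertion that $\Na\cd R$ is strongly continuous ``by the same bookkeeping''; note that your finite-rank upgrade for $\Na\cd R$ genuinely requires strong (not weak) convergence of $\Na R(y)\xi$ for $\xi$ in the range of $P_\eps$, and this does not follow from strong continuity of $R$ alone.

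Once the first-order rephrasing is carried out, the detour through Rellich's lemma and finite-rank projections becomes unnecessary. The paper's proof sets $T(y)=R^{1/2}(y)\big(\Na\cd R^{1/2}(y)\big)^*$, rewrites $\Na\cd(R(x)-R(y))$ as $\big(D^\al_y T(y)\big)^* D^\al_y R(x) - T(y)^* (D^\al_x)^* D^\al_x R(x)$, pairs with vectors, and substitutes $D^\al_z=A(z,\cd)\Na+B(z,\cd)$, so that the only $x,y$-dependence left inside the inner products is through $A(y,\cd)-A(x,\cd)$ and $B(y,\cd)-B(x,\cd)$. With $K=\sup_z\|\Na\cd R^{1/2}(z)\|_\infty<\infty$ (which is where the uniform G\aa rding inequality enters) this yields the explicit operator-norm bound
\[
\|\Na\cd(R(x)-R(y))\|_\infty \;\leq\; 2K^2\,\|A(y,\cd)-A(x,\cd)\|_\infty \;+\; 2K\,\|B(y,\cd)-B(x,\cd)\|_\infty \, ,
\]
so Assumption \ref{a:localII}(2) gives norm continuity directly, with no compactness input; Rellich's lemma is only needed later, in Proposition \ref{p:localcompactII}. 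In short: your collective-compactness upgrade is sound as a general principle, but the foundational continuity step is exactly where the work lies, and repairing it along the lines above already produces the norm estimate, making the rest of your scheme superfluous.
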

\begin{proof}
We will only prove the statement for the map $\Na \cd R$ since the argument in the case of $R$ is similar but easier.

Thus, let $\xi \in L^2(\mathbb{B}_\de(0))^{\op k}$ and $\eta \in L^2(\mathbb{B}_\de(0))^{\op \dim(F) \cd k}$ as well as $x,y \in \pi(U)$ be given. To ease the notation, we define the bounded operator 
\[
T(y) := R^{1/2}(y) \big( \Na \cd R^{1/2}(y) \big)^* : L^2(\mathbb{B}_\de(0))^{\op \dim(F) \cd k} \to L^2(\mathbb{B}_\de(0))^{\op k}
\]
and remark that 
\[
T(y)^* = \Na \cd R(y) : L^2(\mathbb{B}_\de(0))^{\op k} \to L^2(\mathbb{B}_\de(0))^{\op \dim(F) \cd k} \, .
\]
Using the resolvent identity we have that
\[
\begin{split}
\Na \cd ( R(x) - R(y)) 
& = \Na \cd R^{1/2}(y) ( D_y^\al R^{1/2}(y) )^* D_y^\al R(x) \\ 
& \q + \Na R(y) R(x) - \Na R(y) \\
& = ( D_y^\al T(y) )^* D_y^\al R(x) + T(y)^* R(x) - T(y)^* \\
& = ( D_y^\al T(y) )^* D_y^\al R(x) - T(y)^* (D^\al_x)^*D^\al_x R(x) 
\end{split}
\]
and thus that
\[
\begin{split}
& \inn{ \Na \cd  ( R(x) - R(y) ) \xi , \eta}  \\
& \q = \binn{ D^\al_y R(x) \xi, D^\al_y T(y) \eta } - \binn{ (D^\al_x)^*D^\al_x R(x) \xi, T(y) \eta } \\
& \q = \binn{ \big( A(y,\cd) - A(x,\cd) \big) \Na \cd R(x) \xi, D^\al_y T(y) \eta} \\
& \qq + \binn{ \big( B(y,\cd) - B(x,\cd) \big) R(x) \xi, D^\al_y T(y) \eta} \\
& \qqq + \binn{D^\al_x R(x) \xi, \big( A(y,\cd) - A(x,\cd) \big) \Na \cd T(y) \eta} \\
& \qqqq + \binn{D^\al_x R(x) \xi, \big( B(y,\cd) - B(x,\cd) \big) T(y) \eta} \, .
\end{split}
\]
But this implies that
\[
\begin{split}
\| \Na \cd  ( R(x) - R(y) ) \|_\infty
& \leq 2 \cd \| A(y,\cd) - A(x,\cd) \|_\infty \cd K^2  \\ 
& \qq +  2 \cd \| B(y,\cd) - B(x,\cd) \|_\infty \cd K  \, ,
\end{split}
\]
where $K := \sup_{z \in \pi(U)} \| \Na \cd R^{1/2}(z) \|_\infty$. This estimate proves the lemma.
\end{proof}

\begin{prop}\label{p:localcompactII}
Suppose that $\sD : \sE \to \sE$ is vertically elliptic and symmetric. For each $p \in M$, there exists an open subset $U \subseteq M$ with $p \in U$ such that
\begin{enumerate}
\item The closed and symmetric unbounded operator $D_U : \Tex{Dom}(D_U) \to X_U$ is regular.
\item The bounded adjointable operator
\[
m_U(\chi) \cd ( 1 + D_U^* D_U)^{-1} : X_U \to X_U
\]
is compact for all $\chi \in C_c^\infty( U )$.
\end{enumerate}
In particular, it holds that the composition
\[
\begin{CD}
\Tex{Dom}(D_U) @>{i_{D_U}}>> X_U @>{m_U(\chi)}>> X_U
\end{CD}
\]
is a compact operator for all $\chi \in C_c^\infty( U)$.
\end{prop}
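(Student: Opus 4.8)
The plan is to prove both claims after transporting the situation, via the unitary isomorphism $\al$ of the preceding lemma, to the model Hilbert $C^*$-module $Y := C_0(\pi(U)) \hot L^2(\mathbb{B}_\de(0))^{\op k}$; here $D^\al = \al D_U \al^{-1}$ and, writing $\varphi := \chi \ci \psi^{-1} \in C_c^\infty(\pi(U) \ti \mathbb{B}_\de(0))$, the operator $m_U(\chi)$ becomes multiplication by $\varphi$ on $Y$. So it suffices to show that $D^\al$ is regular and that $\varphi \cd (1 + (D^\al)^* D^\al)^{-1}$ is compact. The first observation is that for every $x \in \pi(U)$ the fibrewise resolvent $R(x) = (1 + (D^\al_x)^* D^\al_x)^{-1}$ is a \emph{compact} operator on $L^2(\mathbb{B}_\de(0))^{\op k}$: by Lemma \ref{l:domain} it factors as a bounded map into $\Tex{Dom}(\Na)$ followed by the inclusion $\Tex{Dom}(\Na) \into L^2(\mathbb{B}_\de(0))^{\op k}$, which is compact by the Rellich--Kondrachov theorem since $\mathbb{B}_\de(0)$ is bounded. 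Together with Lemma \ref{l:opnorcon} this yields a bounded adjointable operator $\mathcal{R} \in \mathbb{L}(Y)$, assembled from the norm-continuous bounded family $x \mapsto R(x)$; in the same way the norm-continuous family $x \mapsto \Na \cd R(x)$, which is uniformly bounded by $K := \sup_x \| \Na \cd R^{1/2}(x) \|_\infty$, assembles to a bounded adjointable operator $\mathcal{M} : Y \to C_0(\pi(U)) \hot L^2(\mathbb{B}_\de(0))^{\op \dim(F) \cd k}$.

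The main work is the regularity of $D^\al$, which I would prove by exhibiting $\mathcal{R}$ as a bounded adjointable inverse of $1 + (D^\al)^* D^\al$. Using the uniform equivalence of graph norms in Lemma \ref{l:domain} and the operator-norm continuity of the coefficients (Assumption \ref{a:localII}(2)), one first promotes the pointwise identity $D^\al_x = A(x,\cd) \cd \Na + B(x,\cd)$ to the module level: $\Tex{Dom}(D^\al) = \Tex{Dom}(1 \hot \Na)$ with equivalent graph norms, and $D^\al$ is the decomposable operator $\zeta \mapsto [\, x \mapsto D^\al_x(\zeta(x)) \,]$. It is here that the uniform G\aa rding inequality (Assumption \ref{a:localII}(3)) is needed. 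For $\xi \in Y$ the family $x \mapsto R(x)\xi(x)$ is then norm-continuous into $\Tex{Dom}(\Na)$ and vanishes at infinity there (since $\| R(x)\xi(x) \|_\Na \leq (1+K) \| \xi(x) \|_2$), so that $\mathcal{R}\xi \in \Tex{Dom}(1 \hot \Na) = \Tex{Dom}(D^\al)$ and $D^\al \mathcal{R}\xi = [\, x \mapsto D^\al_x R(x) \xi(x) \,]$. Since each vector $R(x)\xi(x)$ actually lies in $\Tex{Dom}((D^\al_x)^* D^\al_x)$, the fibrewise inner-product identity $\inn{D^\al_0 s, \eta}_Y = [\, x \mapsto \inn{D^\al_x s(x), \eta(x)} \,]$ (valid on the core) shows that $D^\al \mathcal{R}\xi \in \Tex{Dom}((D^\al)^*)$ with $(D^\al)^* D^\al \mathcal{R}\xi = [\, x \mapsto (1 - R(x))\xi(x) \,] = \xi - \mathcal{R}\xi$, hence $(1 + (D^\al)^* D^\al)\mathcal{R} = 1$. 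Since $D^\al$ is closed with densely defined adjoint (it is symmetric), the surjectivity of $1 + (D^\al)^* D^\al$ forces $D^\al$, and therefore $D_U$, to be regular. I expect the genuine difficulty to lie precisely in this passage from the pointwise data --- Lemma \ref{l:domain} and the fibrewise resolvents --- to the module-level statements $\Tex{Dom}(D^\al) = \Tex{Dom}(1 \hot \Na)$ and $\mathcal{R}\xi \in \Tex{Dom}(D^\al)$, i.e.\ in keeping all the $x$-dependent estimates uniform enough to integrate over $\pi(U)$; the uniformity built into Assumption \ref{a:localII} and Lemma \ref{l:opnorcon} is exactly what makes this possible.

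For the compactness statement, $\varphi \cd (1 + (D^\al)^* D^\al)^{-1} = \varphi \cd \mathcal{R}$, and I would factor this through $C_c(\pi(U))$: choosing $g \in C_c(\pi(U))$ with $g \equiv 1$ on $p_1(\Tex{supp}(\varphi))$ we get $\varphi \cd \mathcal{R} = \varphi \cd (g \hot 1) \cd \mathcal{R}$, and $(g \hot 1) \cd \mathcal{R}$ is the family $x \mapsto g(x)R(x)$, which is norm-continuous (Lemma \ref{l:opnorcon}), compactly supported in $x$, and valued in $\mathbb{K}(L^2(\mathbb{B}_\de(0))^{\op k})$ by the Rellich argument above; hence it is a compact operator on $Y$ by \cite[Lemma 4]{Kas80a}, and left multiplication by the bounded operator $\varphi$ keeps it compact. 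This gives (2). For the concluding ``in particular'', one uses that $i_{D_U}$ coincides with $(1 + D_U^* D_U)^{-1/2}$ after the canonical identification $\Tex{Dom}(D_U) \cong \Tex{Im}((1 + D_U^* D_U)^{-1/2})$, together with the formula $(1 + D_U^* D_U)^{-1/2} = \tfrac{1}{\pi} \int_0^\infty \la^{-1/2} (\la + 1 + D_U^* D_U)^{-1} \, d\la$: the bound $\| m_U(\chi)(\la + 1 + D_U^* D_U)^{-1} \| \leq \| \chi \|_\infty (\la + 1)^{-1}$ makes the integral norm-convergent, while each integrand $m_U(\chi)(\la + 1 + D_U^* D_U)^{-1} = ( m_U(\chi)(1 + D_U^* D_U)^{-1} )( (1 + D_U^* D_U)(\la + 1 + D_U^* D_U)^{-1} )$ is compact by (2); so $m_U(\chi) \cd (1 + D_U^* D_U)^{-1/2}$, and thus $m_U(\chi) \cd i_{D_U}$, is compact.
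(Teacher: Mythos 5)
Your proposal is correct and follows essentially the same route as the paper: transport to the model module via $\al$, assemble the fibrewise resolvents $R(x)$ into a bounded adjointable operator using Lemma \ref{l:opnorcon}, verify $(1 + (D^\al)^* D^\al)R = 1$ by fibrewise computations (via Lemma \ref{l:domain} and the inner-product identity) to get regularity, and deduce compactness from fibrewise Rellich compactness together with norm-continuity of the resolvent family. The only differences are cosmetic: you invoke Rellich--Kondrachov for $H^1_0(\mathbb{B}_\de(0))$ directly where the paper cites \cite[Lemma 10.4.3]{HR00} with a cutoff, and you spell out the passage from $(1+D_U^*D_U)^{-1}$ to $m_U(\chi)\cd i_{D_U}$ (via the square-root integral formula), which the paper leaves implicit.
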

\begin{proof}
We choose the open subset $U \subseteq M$ with $p \in U$ such that Assumption \ref{a:localI} and Assumption \ref{a:localII} are satisfied.

Let us first show that $D^\al$ is regular. Since $D_U$ and $D^\al$ are unitarily equivalent this is equivalent to the regularity of $D_U$. We show that the operator norm continuous map $R : \pi(U) \to \mathbb{L}\big( L^2(\mathbb{B}_\de(0))^{\op k} \big)$ satisfies that $\Tex{Im}(R) \su \Tex{Dom}\big( (D^\al)^* D^\al \big)$ and that $\big( 1 + (D^\al)^* D^\al \big) R(\xi) = \xi$ for all $\xi \in C_0\big( \pi(U) \big) \hot L^2( \mathbb{B}_\de(0))^{\op k}$. Remark here that $R$ is identified with a bounded adjointable operator on $C_0\big( \pi(U) \big) \hot L^2( \mathbb{B}_\de(0))^{\op k}$ in the obvious way. Now, by Lemma \ref{l:domain} and Lemma \ref{l:opnorcon} we have that $\Tex{Im}(R) \su \Tex{Dom}(D^\al)$. For each $\xi \in C_0( \pi(U)) \hot L^2( \mathbb{B}_\de(0))^{\op k}$ and each $\eta \in \Tex{Dom}(D^\al)$ we then have that
\[
\inn{ D^\al R (\xi), D^\al(\eta)}(x) = \inn{D^\al_x R(x)( \xi(x)), D^\al_x( \eta(x)) }
= \inn{ (1 - R)(\xi), \eta}(x)
\]
for all $x \in \pi(U)$. But this shows that $\Tex{Im}( D^\al R ) \su \Tex{Dom}\big( ( D^\al)^* \big)$ and hence that $\Tex{Im}(R) \su \Tex{Dom}\big( (D^\al)^* D^\al \big)$. Moreover, we may conclude that $( D^\al)^* D^\al R = 1 - R$. We have thus proved that $D^\al$ is regular with resolvent $(1 + (D^\al)^* D^\al)^{-1} = R$.

To show that $m_U(\chi) \cd ( 1 + D_U^* D_U)^{-1} : X_U \to X_U$ is a compact operator for all $\chi \in C_c^\infty(U)$ it now suffices to show that $m(f) \cd R(x) : L^2\big( \mathbb{B}_\de(0)\big)^{\op k} \to L^2\big( \mathbb{B}_\de(0)\big)^{\op k}$ is a compact operator for all $x \in \pi(U)$ and all $f \in C_c^\infty( \mathbb{B}_\de(0))$. But this follows from Rellich's Lemma since $\sD^\al_x : C^\infty\big( \mathbb{B}_\de(0)\big)^{\op k} \to C^\infty\big( \mathbb{B}_\de(0)\big)^{\op k}$ is an elliptic first-order differential operator, see \cite[Lemma 10.4.3]{HR00}. 
\end{proof}

\section{Factorization in unbounded KK-theory}
We remain in the setting described in Section \ref{sect:fiberbundles}, thus we consider a smooth fiber bundle $\pi : M \to B$ with a compact model fiber $F$. We will assume that both $M$ and $B$ are Riemannian and that the submersion $\pi : M \to B$ is \emph{Riemannian} thus that the derivative $d \pi(x) : (T_V M)_x^\perp \to (T B)_{\pi(x)}$ is an isometry for all $x \in M$ ($T_V M \to M$ denotes the smooth hermitian vector bundle of vertical tangent vectors). We will finally assume that $M$ and $B$ are both \emph{spin$^c$ manifolds}.

The second main result of this paper is the factorization in unbounded KK-theory of the Dirac operator $D_M$ on the total manifold $M$ in terms of a vertical operator $D_V$ and the Dirac operator $D_B$ on the base manifold $B$. This factorization result holds up to an explicit curvature term, which is invisible at the level of bounded KK-theory.

We work in the case where $M$ and $B$ are both even dimensional, but notice that our results can be readily translated to the remaining three cases (counting parity of dimensions).

The Riemannian metrics on $M$ and $B$ will be denoted by $\inn{\cd,\cd}_M : \sX(M) \ti \sX(M) \to C^\infty(M)$ and $\inn{\cd,\cd}_B : \sX(B) \ti \sX(B) \to C^\infty(B)$, respectively.




\subsection{The vertical unbounded Kasparov module}
In order to link the Dirac operators $D_B$ and $D_M$ via a tensor-sum factorization, we start by constructing a vertical Dirac operator $\sD_V : \sE_V \to \sE_V$. This vertical Dirac operator will be an odd symmetric, vertically elliptic, first-order differential operator acting on the smooth sections $\sE_V$ of a $\zz/2\zz$-graded smooth hermitian vector bundle $E_V \to M$ (see Definition \ref{d:symm} and Definition \ref{d:verell}). In particular, using Theorem \ref{thm:vertical-KK-cycle}, we will obtain an even unbounded Kasparov module from $C_0(M)$ to $C_0(B)$.

We follow the approach and notation of \cite{KS16}, where we studied ---actually, as a preparation for the present paper--- Riemannian submersions of {\em compact} Riemannian spin$^c$ manifolds. 

Working with the unital function algebras $C^\infty(M)$ and $C^\infty(B)$, we consider the smooth sections $\sE_M := \Ga^\infty(M,E_M)$ and $\sE_B := \Ga^\infty(B,E_B)$ of the $\zz/2\zz$-graded spinor bundles $E_M \to M$ and $E_B \to B$, respectively. The spin$^c$-structures provide us with even isomorphisms
\[
c_M : \Cl(M) \to \Tex{End}_{C^\infty(M)}(\sE_M),\qquad 
c_B : \Cl(B) \to \Tex{End}_{C^\infty(B)}(\sE_B) \, .
\]
We fix even hermitian Clifford connections $\Na^{\sE_M} : \Ga^\infty(M,E_M) \to \Ga^\infty(M,E_M \ot T^* M)$ and $\Na^{\sE_B} : \Ga^\infty(B,E_B) \to \Ga^\infty(B,E_B \ot T^* B)$.

We let 
\[
\Cl_V(M) := \Ga^\infty\big(M, \Cl(T_V M)\big) , \q  \Cl_H(M) := \Ga^\infty\big(M, \Cl(T_H M)\big)
\]
denote the Clifford algebras of vertical and horizontal vector fields, respectively. Remark that the horizontal vector fields $\sX_H(M)$ are defined as the smooth sections of the smooth vector bundle $T_H M \to M$ with fiber $(T_V M)_x^\perp \subseteq (T M)_x$ at each $x \in M$.

%
%

Then, as in \cite{KS16}, we arrive at
\begin{itemize}
\item a $\zz/2\zz$-graded {\em horizontal spinor bundle} $E_H := \pi^* E_B$, together with a hermitian Clifford connection $\nabla^{\sE_H}$ for Clifford multiplication $c_H$ by horizontal vector fields on $M$.
\item a $\zz/2\zz$-graded {\em vertical spinor bundle} $E_V :=  E_H^* \ot_{\Cl( T_H M)} E_M$, together with a hermitian Clifford connection $\nabla^{\sE_V}$ for Clifford multiplication $c_V$ by vertical vector fields on $M$.
\end{itemize}
The explicit formulae for these operations can be found in \cite[Section 3]{KS16}. We let $\sE_H := \Ga^\infty(M,E_H)$ and $\sE_V := \Ga^\infty(M,E_V)$ denote the smooth sections of the horizontal and vertical spinor bundle, respectively.

The \emph{vertical Dirac operator $\sD_V : \sE_V \to \sE_V$} is defined by the local expression
\[
\sD_V(\xi) = i  \sum_{j=1}^{\dim (F)} c_V(e_j) \nabla^{\sE_V}_{e_j}(\xi) \qquad \xi \in \sE_V \, ,
\]
where $\{ e_j\}$ is a local orthonormal frame of real vertical vector fields. Clearly, $\sD_V$ is an odd first-order differential operator, which only differentiates in the fiber direction.
%

As in Section \ref{ss:symreg} we let $X$ denote the $\zz/2\zz$-graded $C^*$-correspondence from $C_0(M)$ to $C_0(B)$ obtained as the $C^*$-completion of the \emph{compactly supported} sections in $\sE_V$ (the inner product is defined in Equation \eqref{eq:fibinn} and the grading is induced by the grading on $\sE_V$). We promote $\sD_V$ to an odd unbounded operator
\[
(D_V)_0 : \sE_V^c \to X \, .
\]

As in \cite[Lemma 15]{KS16} we obtain the following symmetry result:

\begin{lma}\label{l:symmetry}
The odd unbounded operator $(D_V)_0 : \sE_V^c \to X$ is symmetric.
\end{lma}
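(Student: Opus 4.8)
The plan is to reduce the symmetry of $(D_V)_0$ to a pointwise (fiber-wise) computation, exactly as in \cite[Lemma 15]{KS16}, using the fact that the inner product on $X$ is obtained by integration over the fibers via $\rho$. Concretely, for $s,t \in \sE_V^c$ I would first observe that, since $\inn{\cd,\cd}_X = \rho(\inn{\cd,\cd}_{\sE_V})$ and $\sD_V$ is $C^\infty(B)$-linear, it suffices to show
\[
\rho\big( \inn{\sD_V(s),t}_{\sE_V} \big) = \rho\big( \inn{s,\sD_V(t)}_{\sE_V} \big)
\]
as elements of $C_c^\infty(B)$. Evaluating at a point $b \in B$, and using that $\rho(f)(b) = \int_{M_b} f|_{M_b}\, d\mu_b$, this becomes
\[
\int_{M_b} \inn{\sD_V(s),t}_{\sE_V}\big|_{M_b}\, d\mu_b = \int_{M_b} \inn{s,\sD_V(t)}_{\sE_V}\big|_{M_b}\, d\mu_b \, .
\]
The restriction of $\sD_V$ to the fiber $M_b$ is, up to the identification $di_b : \sX(M_b) \to \sX_V(M)\ot C^\infty(M_b)$, precisely the spin$^c$ Dirac operator $D_{M_b}$ associated to the restricted metric $\inn{\cd,\cd}_b$, the restricted Clifford action $c_V|_{M_b}$, and the restricted Clifford connection $\nabla^{\sE_V}|_{M_b}$ on $E_V|_{M_b} \to M_b$. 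So the identity above is exactly the statement that the Dirac operator on the closed Riemannian spin$^c$ manifold $M_b$ is symmetric with respect to the $L^2$-inner product coming from $\mu_b$.

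The second step is therefore to invoke (or recall) the standard fact that a Dirac-type operator of the form $i\sum_j c(e_j)\nabla_{e_j}$ built from a hermitian Clifford connection on a \emph{closed} Riemannian manifold is formally self-adjoint. The key input is that $\nabla^{\sE_V}$ is a \emph{hermitian} connection (so $d\inn{\xi,\eta} = \inn{\nabla\xi,\eta} + \inn{\xi,\nabla\eta}$) and a \emph{Clifford} connection compatible with $c_V$ and with the Levi-Civita connection on $M_b$; a local computation in an orthonormal frame $\{e_j\}$ then shows $\inn{D_{M_b}s, t} - \inn{s, D_{M_b}t}$ equals the divergence of a vector field (built from $s$, $t$ and the $c_V(e_j)$), and integrating over the closed manifold $M_b$ kills it by the divergence theorem. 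One small point to keep track of is that the frame $\{e_j\}$ is only local, so the divergence-term identity should be phrased globally in terms of the one-form $\omega(Y) := \inn{c_V(Y)s,t}_{\sE_V}|_{M_b}$ (or its metric dual vector field), whose divergence is frame-independent and whose integral over the closed fiber vanishes.

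The main obstacle — though it is more bookkeeping than genuine difficulty — is verifying that the restriction of $(\sD_V, E_V, \nabla^{\sE_V}, c_V)$ to a fiber $M_b$ really is the honest spin$^c$ Dirac package on $M_b$, i.e. that $c_V|_{M_b}$ extends to an isomorphism $\Cl(M_b) \to \Tex{End}_{C^\infty(M_b)}(\sE_{V,b})$ and that $\nabla^{\sE_V}|_{M_b}$ is a hermitian Clifford connection for it. This is where the specific construction $E_V = E_H^* \ot_{\Cl(T_H M)} E_M$ from \cite[Section 3]{KS16} enters, together with the observation that the fibers of $T_V M \to M$ along $M_b$ are canonically $TM_b$. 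Since \cite{KS16} already carried out the compact case, I would simply note that the construction is local in $B$ and hence restricts fiber-wise verbatim, and cite \cite[Lemma 15]{KS16} for the computation, adding only the remark that compactness of the \emph{model fiber} $F$ (hence of each $M_b$) is what makes the boundary term vanish — exactly as emphasized just before Proposition \ref{p:regular}.
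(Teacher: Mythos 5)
Your proposal is correct and takes essentially the same route as the paper, which simply defers to \cite[Lemma 15]{KS16}: the symmetry is reduced fiber-wise via $\rho$, and on each compact fiber the defect $\inn{\sD_V s,t}-\inn{s,\sD_V t}$ is a divergence (since $\nabla^{\sE_V}$ restricts to a hermitian Clifford connection compatible with the fiber's Levi--Civita connection) which integrates to zero. Nothing further is needed.
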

%
The closure of $(D_V)_0$ will be denoted by $D_V : \dom(D_V) \to X$. 

\begin{prop}\label{p:unbkas}
The triple $(C^\infty_c(M),X, D_V)$ is an even unbounded Kasparov module from $C_0(M)$ to $C_0(B)$.
\end{prop}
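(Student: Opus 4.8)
The plan is to apply Theorem \ref{thm:vertical-KK-cycle} directly to the vertical Dirac operator $\sD_V : \sE_V \to \sE_V$. To do so, I need to check that $\sD_V$ satisfies the hypotheses of that theorem, namely that it is a first-order differential operator which is vertically elliptic and symmetric, and that it anti-commutes with a $\zz/2\zz$-grading operator on $\sE_V$ (so as to land in the even case). The symmetry is exactly the content of Lemma \ref{l:symmetry}, which has already been established. The grading operator $\ga \in \Tex{End}_{C^\infty(M)}(\sE_V)$ is the one coming from the $\zz/2\zz$-grading of the vertical spinor bundle $E_V = E_H^* \ot_{\Cl(T_H M)} E_M$; since $\sD_V(\xi) = i \sum_j c_V(e_j) \nabla^{\sE_V}_{e_j}(\xi)$ is built from the odd Clifford multiplications $c_V(e_j)$ composed with the even connection operators $\nabla^{\sE_V}_{e_j}$, it is an odd operator, i.e.\ it anti-commutes with $\ga$. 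That $\sD_V$ differentiates only in the fiber direction (hence is $C^\infty(B)$-linear) is clear from the local formula, since the frame $\{e_j\}$ consists of vertical vector fields.

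The one genuinely substantive point to verify is the vertical ellipticity in the sense of Definition \ref{d:verell}. Here I would compute the principal symbol: for $f \in C^\infty(M)$ one has $\si_{\sD_V}(df) = [\sD_V, f] = i\, c_V(\mathrm{grad}_V f)$, where $\mathrm{grad}_V f \in \sX_V(M)$ is the vertical gradient, i.e.\ the vertical vector field metrically dual (via $\inn{\cd,\cd}_V$) to $d_V f$. If $(d_V f)(x) : (T_V M)_x \to \CC$ is non-trivial then $\mathrm{grad}_V f(x) \neq 0$, and by the Clifford relations $c_V(\mathrm{grad}_V f(x))^2 = -\inn{\mathrm{grad}_V f, \mathrm{grad}_V f}_V(x) = -\|\mathrm{grad}_V f(x)\|^2 \neq 0$ on $(E_V)_x$, so $c_V(\mathrm{grad}_V f(x))$, and hence $\si_{\sD_V}(df)(x)$, is invertible. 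This is the standard argument that Dirac-type operators are elliptic, applied fibrewise; I do not anticipate any difficulty, only bookkeeping with the identifications in the definition of $E_V$.

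Having checked these hypotheses, the conclusion of Theorem \ref{thm:vertical-KK-cycle} gives precisely that $(C^\infty_c(M), X, D_V)$ is an even unbounded Kasparov module from $C_0(M)$ to $C_0(B)$, where $D_V = \ov{(D_V)_0}$ is the closure promised just before the statement and $X$ is the $\zz/2\zz$-graded $C^*$-correspondence constructed as in Section \ref{ss:symreg}. In short, the proof is essentially one line invoking the main theorem of Section \ref{sect:fiberbundles}, with the verification of vertical ellipticity being the only item requiring a short computation; all the hard analytic work (selfadjointness, regularity, local compactness of the resolvent) is already packaged inside Theorem \ref{thm:vertical-KK-cycle}. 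I would therefore expect the write-up to be brief: cite Lemma \ref{l:symmetry} for symmetry, observe oddness from the local formula, verify the symbol computation for vertical ellipticity, and apply Theorem \ref{thm:vertical-KK-cycle}.
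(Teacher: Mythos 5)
Your proposal is correct and follows essentially the same route as the paper: cite Lemma \ref{l:symmetry} for symmetry, compute the principal symbol $\sigma_{\sD_V}(df) = i\sum_j c_V(e_j)\, e_j(f) = i\, c_V((d_V f)^\sharp)$ (your vertical gradient is exactly the musical isomorphism applied to $d_V f$), conclude invertibility from the Clifford relations, and invoke Theorem \ref{thm:vertical-KK-cycle}. The paper even leaves the Clifford-square invertibility step and the oddness of $\sD_V$ implicit, so your write-up is, if anything, slightly more detailed than the original.
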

\proof
Let $f \in C^\infty(M)$ and let $x \in M$. By Lemma \ref{l:symmetry} and Theorem \ref{thm:vertical-KK-cycle} it is enough to verify that the principal symbol $\sigma_{\sD_V}(df)(x) : (E_V)_x \to (E_V)_x$ is invertible whenever $(d_V f)(x) : (T_V M)_x \to \C$ is non-trivial. We compute this principal symbol to be given by the local expression
\[
\sigma_{\sD_V}(df) = [\sD_V,f] =  i \sum_{j = 1}^{\dim(F)} c_V(e_j) e_j(f) \, .
\]
Hence $\sigma_{\sD_V}(df)(x) = c_V( (d_V f)^\sharp)(x)$, where 
\[
\sharp : \Tex{Hom}_{C^\infty(M)}( \sX_V(M),C^\infty(M)) \to \sX_V(M) \qquad \sharp : \bra{e_j} \mapsto e_j
\]
denotes the musical isomorphism. This proves the proposition. 
\endproof

We apply the notation $L^2(\sE^c_M)$ and $L^2(\sE^c_B)$ for the $\zz/2\zz$-graded Hilbert space completions of the smooth compactly supported sections of the spinor bundles $E_M$ and $E_B$, respectively. The inner products come from the Riemannian metrics and the hermitian forms in the usual way.

For later use, we record the following result. The proof is the same as the proof of \cite[Proposition 14]{KS16}. Notice, when reading the statement, that we are tacitly applying the identifications $\Tex{End}_{C^\infty(M)}(\sE_H) \cong \Cl_H(M) \subseteq \Cl(M) \cong \Tex{End}_{C^\infty(M)}(\sE_M)$.

\begin{prop}\label{p:uniisom}
The even left $C^\infty_c(M)$-module isomorphism 
\begin{align*}
W : \sE_V^c \otimes_{C^\infty_c(B)} \sE^c_{B} &\to \sE_{M}^c \\
W : ( \bra{\xi} \ot s) \ot r &\mapsto \big( \ket{r \ci \pi} \bra{\xi} \big)(s)
\end{align*}
defined for $\xi \in \sE_H^c$, $s \in \sE_{M}^c$ and $r \in \sE_{B}^c$, induces an even unitary isomorphism 
\[
W : X \hot_{C_0(B)} L^2(\sE_{B}^c) \to L^2(\sE_M^c)
\]
of $\zz/2\zz$-graded Hilbert spaces.
\end{prop}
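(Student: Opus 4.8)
The plan is to construct the unitary $W$ in two stages, exactly as in \cite[Proposition 14]{KS16}, and then check that the completion of the algebraic map is isometric and surjective. First I would spell out the algebraic module map at the level of smooth sections. The vertical spinor bundle is defined fibrewise as $E_V = E_H^* \otimes_{\Cl(T_H M)} E_M$, so a compactly supported section of $E_V$ is (locally) of the form $\bra{\xi}\otimes s$ with $\xi \in \sE_H^c$ and $s \in \sE_M^c$, where the tensor is balanced over $\Cl_H(M)$. Tensoring on the right with $r \in \sE_B^c$ and using $E_H = \pi^* E_B$ (so $\ket{r\circ\pi}$ makes sense as a section of $E_H$), the formula $W\big(( \bra{\xi}\otimes s)\otimes r\big) := \big(\ket{r\circ\pi}\bra{\xi}\big)(s)$ produces a section of $E_M$ by applying the endomorphism $\ket{r\circ\pi}\bra{\xi} \in \Tex{End}_{C^\infty(M)}(\sE_H) \cong \Cl_H(M) \subseteq \Cl(M) \cong \Tex{End}_{C^\infty(M)}(\sE_M)$ to $s$. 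One checks that $W$ is well-defined (independent of the way an element is written as a sum of elementary tensors, and compatible with the balancing over $\Cl_H(M)$ on the left and over $C^\infty_c(B)$ on the right), that it is $C^\infty_c(M)$-linear for the left actions, and that it is even for the $\zz/2\zz$-gradings. This is the content that can be imported verbatim from \cite[Section 3]{KS16} together with \cite[Proposition 14]{KS16}, since the bundle constructions and the endomorphism identifications are purely fibrewise and do not use compactness of $M$ or $B$.

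Next I would verify the isometry property for the inner products. On the domain side, the inner product on $X \hot_{C_0(B)} L^2(\sE_B^c)$ is, on elementary tensors, $\inn{\eta_1 \hot r_1, \eta_2 \hot r_2} = \inn{r_1, \inn{\eta_1,\eta_2}_X \cdot r_2}_{L^2(\sE_B)}$, and $\inn{\eta_1,\eta_2}_X = \rho(\inn{\eta_1,\eta_2}_{\sE_V})$ is the fibre integral of the pointwise hermitian form on $E_V$. Unwinding the $C^\infty(B)$-valued pairing and then integrating over $B$ against the Riemannian volume form of $B$, one gets an integral over $B$ of an integral over each fibre $M_b$ — i.e. an integral over $M$, once one checks that the product measure $d\mu_b\, d\mu_B$ coincides with $d\mu_M$ under the Riemannian submersion hypothesis (this uses precisely that $d\pi(x)\colon (T_V M)_x^\perp \to (TB)_{\pi(x)}$ is an isometry, so there is no Jacobian factor, and the splitting $TM = T_V M \oplus T_H M$ is orthogonal). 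On the target side, the inner product on $L^2(\sE_M^c)$ is the integral over $M$ of the pointwise hermitian form on $E_M$. So the isometry claim reduces to the fibrewise identity $\inn{(\ket{r_1\circ\pi}\bra{\xi_1})(s_1),(\ket{r_2\circ\pi}\bra{\xi_2})(s_2)}_{E_M} = \inn{r_1, \inn{\bra{\xi_1}\otimes s_1, \bra{\xi_2}\otimes s_2}_{E_V}\, r_2}_{E_B}$ at each point, which is a linear-algebra computation about the hermitian structures on $E_M$, $E_H$, $E_V$ and the trace-like pairing $E_H^* \otimes E_H \to \Cl_H \subseteq \Cl \to \Tex{End}(E_M)$; it is the pointwise version of what appears in \cite[Proposition 14]{KS16}. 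Since both inner products are continuous, $W$ being isometric on the algebraic (hence dense) submodule $\sE_V^c \otimes_{C^\infty_c(B)} \sE_B^c$ extends to an isometry of the completions $X \hot_{C_0(B)} L^2(\sE_B^c) \to L^2(\sE_M^c)$.

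Finally I would argue surjectivity, which will be the main obstacle in the non-compact setting. For density of the range it suffices to show that $\sE_M^c$ is contained in the closure of the image of the algebraic map $\sE_V^c \otimes_{C^\infty_c(B)} \sE_B^c$. Using a partition of unity on $M$ subordinate to trivializing charts of the form $\pi^{-1}(V)$ (so that over such a chart $E_M$, $E_H$, $E_V$ and $E_B|_V$ all trivialize compatibly, and $E_M \cong E_H \otimes E_V$ fibrewise), any $s \in \sE_M^c$ is a finite sum of compactly supported sections each supported in one such chart, and on each chart $s$ can be written exactly as a finite sum of sections of the form $\big(\ket{r\circ\pi}\bra{\xi}\big)(s')$; this is again fibrewise linear algebra (the map $E_H \otimes E_V \to E_M$ is an isomorphism by construction of $E_V$, and $\pi^* E_B \to E_H$ is an isomorphism), and properness of $\pi$ guarantees that compactly supported sections on $M$ arise from compactly supported data on $B$ and the compact fibres. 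Hence $W$ has dense range; combined with the isometry property from the previous step, $W$ is a unitary. The grading compatibility is immediate from evenness of all the maps and bundles involved, so $W$ is an even unitary isomorphism of $\zz/2\zz$-graded Hilbert spaces, as claimed. I expect the only genuinely delicate point to be the measure identification $d\mu_M = d\mu_B \, d\mu_b$ under the Riemannian submersion hypothesis and the bookkeeping of partitions of unity / properness to keep everything compactly supported; everything else transfers from \cite{KS16} with only notational changes.
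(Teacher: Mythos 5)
Your proposal is correct and follows essentially the same route as the paper, which simply invokes the argument of \cite[Proposition 14]{KS16} (the fibrewise linear algebra for the hermitian structures on $E_V$, $E_H$, $E_M$) and tacitly uses the identification $\Tex{End}_{C^\infty(M)}(\sE_H) \cong \Cl_H(M) \subseteq \Cl(M)$; you correctly isolate the only genuinely new points in the non-compact setting, namely the disintegration $d\mu_M = d\mu_b\, d\mu_B$ coming from the Riemannian submersion condition and the compact-support/density bookkeeping. One cosmetic remark: in your surjectivity step you do not actually need $E_M$ or $E_V$ to trivialize over $\pi^{-1}(V)$ (the fibres may carry nontrivial topology); it suffices that $E_B$ trivializes over small $V$, so that an orthonormal frame of $E_H = \pi^* E_B$ over $\pi^{-1}(V)$, cut off by a bump function pulled back from $B$, reproduces any $s \in \sE_M^c$ supported there via $s = \sum_a \big(\ket{(\chi r_a)\circ\pi}\bra{r_a \circ \pi}\big)(s)$.
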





\subsection{Lift of the Dirac operator on the base}\label{ss:lift}

Our next ingredient is the Dirac operator on the base defined by the local expression,
\[
\sD_B = i  \sum_{\al = 1}^{\dim (B)} c_B( f_\alpha) \nabla^{\sE_{B}}_{f_\alpha} : \sE_{B} \to \sE_B \, ,
\]
for any local orthonormal frame $\{ f_\al\}$ for $\sX(B)$ consisting of real vector fields. This Dirac operator is an odd, symmetric and elliptic, first-order differential operator. We are interested in the associated symmetric unbounded operator
\[
(D_B)_0 : \sE^c_B \to L^2(\sE^c_B)
\]
and we denote its closure by $D_B: \dom(D_B) \to L^2(\sE_B)$. Since the Riemannian manifold $B$ is \emph{not assumed} to be complete, it can happen that $D_B$ is \emph{not selfadjoint} and the triple $(C_c^\infty(B), L^2(\sE_B^c), D_B)$ is therefore in general \emph{not a spectral triple} over $C_0(B)$. Instead, as in \cite{BDT89} and \cite{Hil10}, the above triple forms an even \emph{half-closed chain}, representing the fundamental class $[B] \in KK_0(C_0(B),\C)$.


In order to form the unbounded Kasparov product of the vertical and the horizontal components we need to lift the Dirac operator $D_B$ to a symmetric unbounded operator on the Hilbert space $X \hot_{C_0(B)} L^2(\sE_B^c)$. It turns out that the hermitian Clifford connection $\nabla^{\sE_V}$ on $\sE_V$ does \emph{not define} a metric connection on $\sE_V^c \subseteq X$, due to correction terms that come from the measure on the fibers $M_b$, $b \in B$. 

To obtain a metric connection for the $C^\infty_c(B)$-valued inner product $\langle\cdot,\cdot \rangle_X$ on $\sE^c_V$, we recall that the {\em second fundamental form} $S \in \Ga^\infty(M, T^*_V M \ot T^*_V M \ot T^*_H M)$ can be defined by 
\begin{equation}
\label{eq:2nd-fund-form}
S(X,Y,Z) := \frac{1}{2}\big( Z (\inn{X,Y}_M) - \inn{[Z,X],Y}_M - \inn{[Z,Y],X}_M \big)
\end{equation}
for real vertical vector fields $X,Y$ and real horizontal vector fields $Z$ on $M$. Moreover, the \emph{mean curvature} $k \in \Tex{Hom}_{C^\infty(M)}\big( \sX_H(M),C^\infty(M)\big)$ is given as the trace
\[
k = (\tr \otimes 1) (S) \, .
\]
 
As in \cite[Definition 18]{KS16}, we define a metric connection on $\sE_V^c \subseteq X$ by 
\[
\nabla^X_Z(\xi) = \nabla^{\sE_V}_{Z_H}(\xi) + \frac{1}{2} k(Z_H) \cdot \xi \in \sE_V^c \subseteq X
\]
for any real vector field $Z$ on $B$, with horizontal lift $Z_H \in \sX_H(M) \cong \Ga^\infty(M,\pi^*T B)$, and any $\xi \in \sE_V^c \subseteq X$.

\begin{lma}
The local expression 
\[
\begin{split}
& (1 \otimes_\nabla D_B)_0 (\xi \otimes r) \\
& \q := \xi \otimes D_B( r) + i \sum_\alpha \nabla^X_{f_\alpha} (\xi)  \otimes c_B(f_\alpha)(r) \qquad
\xi \in \sE_V^c \, , \, \, r \in \sE_B^c
\end{split}
\]
defines a symmetric unbounded operator 
\[
(1 \ot_\Na D_B)_0 : \sE_V^c \otimes_{C^\infty_c(B)} \sE_{B}^c \to X \hot_{C_0(B)} L^2(\sE_{B}^c) \, .
\]
\end{lma}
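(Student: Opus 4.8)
The plan is to verify the two defining properties of a symmetric operator: that the proposed formula is well-defined on the balanced tensor product $\sE_V^c \otimes_{C^\infty_c(B)} \sE_B^c$, and that it satisfies the symmetry identity $\langle (1 \ot_\Na D_B)_0(\eta), \zeta \rangle = \langle \eta, (1 \ot_\Na D_B)_0(\zeta) \rangle$ for $\eta, \zeta$ in this algebraic tensor product. The first point reduces to checking that $\nabla^X$ together with $D_B$ behaves correctly with respect to the right $C^\infty_c(B)$-action: if $\xi \in \sE_V^c$, $f \in C^\infty_c(B)$, $r \in \sE_B^c$, then $\xi \cdot f \otimes r$ and $\xi \otimes f \cdot r$ must have the same image. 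This is the usual Leibniz-rule compatibility: $D_B(f \cdot r) = f \cdot D_B(r) + i \sum_\alpha c_B(f_\alpha)(r) \cdot f_\alpha(f)$ and $\nabla^X_{f_\alpha}(\xi \cdot f) = \nabla^X_{f_\alpha}(\xi) \cdot f + \xi \cdot (f_\alpha(f)\circ\pi)$ (using that $\nabla^X$ has the correct connection form and that $(f_\alpha)_H(f \circ \pi) = (f_\alpha(f)) \circ \pi$), and the cross terms cancel by the symbol identity $\sigma_{\sD_B}(df) = i\sum_\alpha c_B(f_\alpha) f_\alpha(f)$ matched against the right action. This is routine, essentially identical to \cite[Definition 18]{KS16}.

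For symmetry, I would expand $\langle (1 \ot_\Na D_B)_0(\xi \otimes r), \xi' \otimes r' \rangle$ using the definition of the interior tensor product inner product $\langle \xi \otimes r, \xi' \otimes r' \rangle = \langle r, m(\langle \xi, \xi' \rangle_X) r' \rangle_{L^2(\sE_B^c)}$. There are two types of terms. The terms involving $D_B$ on the right factor, namely $\langle r, \langle \xi, \xi' \rangle_X \cdot D_B(r')\rangle$ versus $\langle D_B(r), \langle \xi, \xi'\rangle_X \cdot r'\rangle$ — here one uses the symmetry of $(D_B)_0$ together with the fact that $\langle\xi,\xi'\rangle_X \in C^\infty_c(B)$ commutes past $D_B$ up to a first-order correction $i\sum_\alpha c_B(f_\alpha) f_\alpha(\langle\xi,\xi'\rangle_X)$. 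The terms involving $\nabla^X$ on the left factor produce $i\sum_\alpha \langle r, \langle \nabla^X_{f_\alpha}\xi, \xi'\rangle_X \cdot c_B(f_\alpha) r'\rangle$ and its mirror. The whole computation hinges on the \emph{metric property} of $\nabla^X$, i.e.\ $f_\alpha(\langle \xi, \xi'\rangle_X) = \langle \nabla^X_{f_\alpha}\xi, \xi'\rangle_X + \langle \xi, \nabla^X_{f_\alpha}\xi'\rangle_X$, which is exactly why the mean-curvature correction $\tfrac12 k(Z_H)$ was inserted into $\nabla^X$ in the first place; this compatibility with $\langle\cd,\cd\rangle_X$ is where integration-over-the-fiber and the second fundamental form enter, and it is presumably recorded (or immediately adaptable) from \cite{KS16}. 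Combining the metric property with the skew-adjointness of $i c_B(f_\alpha)$ (each $c_B(f_\alpha)$ is self-adjoint on $L^2(\sE_B^c)$, so $i c_B(f_\alpha)$ is skew) makes the $\nabla^X$-terms pair up, while the first-order correction from commuting $\langle\xi,\xi'\rangle_X$ past $D_B$ cancels precisely against the leftover piece of the $\nabla^X$-terms after applying the metric identity. Collecting everything yields the symmetry identity.

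The only genuine subtlety — and the step I expect to be the main obstacle — is the bookkeeping of the cross terms: one must be careful that all objects are compactly supported (so the integrations by parts implicit in "$(D_B)_0$ is symmetric" are legitimate with no boundary contributions, which holds since $B$ need not be complete but everything is compactly supported), and that the horizontal lift, the Clifford action $c_B$, and the connection form of $\nabla^X$ are combined in the correct order. Since $B$ is not assumed complete we cannot invoke selfadjointness of $D_B$, only the formal symmetry of $(D_B)_0$ on $\sE_B^c$, but that is all that is needed here. I would organize the proof as: (i) well-definedness on the balanced tensor product via the Leibniz/symbol computation; (ii) a lemma recalling the metric property of $\nabla^X$ with respect to $\langle\cd,\cd\rangle_X$ (citing \cite[Lemma~16 and Definition~18]{KS16} or reproving it in a line); (iii) the symmetry computation itself, which is then a short matching of four groups of terms. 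No step requires any new idea beyond what is already in \cite{KS16}; the content is entirely in verifying that the non-compact, non-unital setting causes no new difficulties because of the compact-support hypotheses.
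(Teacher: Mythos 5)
Your proposal is correct and is exactly the argument the paper has in mind: the paper states this lemma without proof, deferring implicitly to the compact-case computation of \cite{KS16}, and your outline (well-definedness over the $C^\infty_c(B)$-balanced tensor product via the Leibniz/symbol identities, then symmetry from the formal symmetry of $(D_B)_0$ on compactly supported sections, selfadjointness of $c_B(f_\alpha)$, and the metric property of $\nabla^X$ for $\inn{\cd,\cd}_X$ --- the very reason for the mean-curvature correction) is precisely that standard computation. No gaps.
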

We denote the closure of $(1 \ot_\Na D_B)_0$ by
\[
1 \ot_\Na D_B : \dom(1 \ot_\Na D_B) \to X \hot_{C_0(B)} L^2(\sE^c_B) \, .
\]

The selfadjoint and regular unbounded operator $D_V : \dom(D_V) \to X$ induces a selfadjoint unbounded operator
\[
(D_V \ot 1)_0 : \dom(D_V) \ot_{C_0(B)} L^2( \sE^c_B) \to X \hot_{C_0(B)} L^2(\sE^c_B)
\]
and we let $D_V \ot 1 : \dom( D_V \ot 1) \to  X \hot_{C_0(B)} L^2(\sE^c_B)$ denote its closure.

We now compute the commutator of $D_V \otimes 1$ and $1\otimes_\nabla D_B$ (when restricted to $\sE_V^c \otimes_{C^\infty_c(B)} \sE_{B}^c \subseteq X \hot_{C_0(B)} L^2(\sE^c_B)$). This computation will be crucial in the proof of our main Theorem \ref{thm:fact-KK} below.

\begin{lma}\label{lma:commutator-Kuc}
Suppose that $\xi \in \sE^c_V \subseteq X$ and $r \in \sE_B^c \subseteq L^2(\sE_B^c)$. Then we have the local expression
\begin{align*}
&[D_V \otimes 1, 1 \otimes_\nabla D_B](\xi \ot r)\\  \nonumber 
&\quad = -\sum_{j,k,\al} S (e_k, e_j, (f_\alpha)_H)  \big(c_V(e_j) \nabla_{e_k}^{\sE_V} \big) (\xi) \otimes  c_B(f_\alpha)(r)\\ \nonumber 
& \qquad - \sum_{j,\alpha} c_V(e_j)\Big( \Omega^{\sE_V}( e_j, (f_\alpha)_H)+\frac 12  e_j \big( k((f_\alpha)_H) \big) \Big)(\xi)\otimes  c_B(f_\alpha)( r) \, ,
\end{align*}
where $\Omega^{\sE_V} : \Ga^\infty(M,E_V) \to \Ga^\infty(M,E_V \ot T^* M \wedge T^* M)$ is the curvature form of the hermitian connection $\nabla^{\sE_V}$. 
\end{lma}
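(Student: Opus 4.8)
The plan is to compute both operators $D_V \otimes 1$ and $1 \otimes_\nabla D_B$ on the algebraic tensor product $\sE_V^c \otimes_{C^\infty_c(B)} \sE_B^c$ using the explicit local formulas, and to subtract the two compositions term by term. First I would fix a local orthonormal frame $\{f_\al\}$ of real vector fields on $B$ with horizontal lifts $(f_\al)_H$, and a local orthonormal frame $\{e_j\}$ of real vertical vector fields on $M$; using the product-type local expression
\[
(D_V \otimes 1)_0(\xi \otimes r) = i \sum_j c_V(e_j)\nabla^{\sE_V}_{e_j}(\xi) \otimes r
\]
and the defining formula for $(1 \otimes_\nabla D_B)_0$, I would expand the two orders of composition. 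Each term will be a product of a Clifford action $c_V(e_j)$ or $c_B(f_\al)$ with a connection coefficient. In the composition $(D_V\otimes 1)(1\otimes_\nabla D_B)$ one gets $\xi \otimes D_B^2(r)$-type contributions, mixed terms $c_V(e_j)\nabla^{\sE_V}_{e_j}\nabla^X_{f_\al}(\xi)\otimes c_B(f_\al)(r)$, and a $c_V(e_j)\nabla^{\sE_V}_{e_j}(\xi)\otimes$ (Dirac applied to the base part); in the other order $(1\otimes_\nabla D_B)(D_V\otimes 1)$ one gets the $\nabla^X_{f_\al}$ acting on $c_V(e_j)\nabla^{\sE_V}_{e_j}(\xi)$. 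The purely ``diagonal'' pieces and the terms where $D_B$ acts on $r$ cancel between the two orders, so the commutator reduces to the difference of the mixed curvature-type terms together with whatever is produced by moving $\nabla^X_{f_\al}$ past $c_V(e_j)$ and past $\nabla^{\sE_V}_{e_j}$.

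The next step is to organize this difference. Moving $\nabla^{\sE_V}_{f_\al^H}$ (which is the horizontal part of $\nabla^X_{f_\al}$, up to the mean curvature correction $\tfrac12 k((f_\al)_H)$) past $\nabla^{\sE_V}_{e_j}$ produces the curvature term $\Omega^{\sE_V}(e_j,(f_\al)_H)$ plus the connection applied to the bracket $[f_\al^H, e_j]$, i.e. a term $\nabla^{\sE_V}_{[f_\al^H,e_j]}(\xi)$. The bracket of a horizontal lift with a vertical field is vertical, and its components against the frame $\{e_k\}$ are governed precisely by the second fundamental form: $\inn{[(f_\al)_H, e_j], e_k}_M$ equals (up to the standard sign) $S(e_j, e_k, (f_\al)_H)$ by the definition of $S$ in \eqref{eq:2nd-fund-form}, after using that the frames are orthonormal and that Clifford connections are metric. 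Moving $\nabla^X_{f_\al}$ past $c_V(e_j)$ produces $c_V(\nabla_{(f_\al)_H} e_j)$-type terms, whose vertical components again feed back into second-fundamental-form contributions; combined with the $\nabla^{\sE_V}_{[f_\al^H,e_j]}$ term and the fact that $\nabla^{\sE_V}$ is a Clifford connection (so it commutes with Clifford multiplication up to the Levi-Civita connection on the bundle), all the bracket/connection-of-frame pieces assemble into the single displayed sum $-\sum_{j,k,\al} S(e_k,e_j,(f_\al)_H)\, c_V(e_j)\nabla^{\sE_V}_{e_k}(\xi)\otimes c_B(f_\al)(r)$. The leftover genuinely curvature-valued piece is $-\sum_{j,\al} c_V(e_j)\Omega^{\sE_V}(e_j,(f_\al)_H)(\xi)\otimes c_B(f_\al)(r)$, and the mean-curvature correction in $\nabla^X$ versus $\nabla^{\sE_V}$ contributes exactly the $-\tfrac12 \sum_{j,\al} c_V(e_j) e_j\big(k((f_\al)_H)\big)(\xi)\otimes c_B(f_\al)(r)$ term.

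This is essentially the same computation as the one leading to \cite[Lemma 19 / Proposition 20]{KS16}, so I would follow that notation closely; the only new subtlety is that here $M$, $B$ need not be compact and the inner products on $X$ are $C^\infty_c(B)$-valued rather than scalar, but since the identity is local and algebraic on the core $\sE_V^c \otimes_{C^\infty_c(B)} \sE_B^c$, this causes no difficulty — one just needs that $\nabla^X$ is the correct metric connection for $\inn{\cd,\cd}_X$, which is recalled just above. The main obstacle, and the only place care is genuinely required, is bookkeeping the signs and the precise slots of $S$: one must be consistent about whether $S(X,Y,Z)$ is symmetric in $X,Y$, about the sign convention relating $\inn{[Z,X],Y}_M$ to $S$, and about the $i$'s coming from the two Dirac operators, so that the frame-derivative terms really do collapse into $-S(e_k,e_j,(f_\al)_H) c_V(e_j)\nabla^{\sE_V}_{e_k}$ with the stated sign and index placement rather than some permuted variant. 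Once the sign conventions from \eqref{eq:2nd-fund-form} are pinned down, the rest is a routine expansion, and I would present it as such.
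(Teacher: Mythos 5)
Your proposal is correct and follows essentially the same route as the paper: after the diagonal and $D_B$-terms cancel, the commutator reduces to $-\sum_{j,\alpha}[c_V(e_j)\nabla^{\sE_V}_{e_j},\nabla^X_{f_\alpha}]\otimes c_B(f_\alpha)$, which is then processed exactly as you describe — curvature $\Omega^{\sE_V}$ plus $\nabla^{\sE_V}_{[(f_\alpha)_H,e_j]}$ from the connection commutator, verticality of $[(f_\alpha)_H,e_j]$, Koszul's formula converting the frame-derivative and bracket terms into the second fundamental form, and the mean-curvature correction in $\nabla^X$ giving the $\tfrac12 e_j(k((f_\alpha)_H))$ term. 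The sign bookkeeping you flag (orthonormality kills the $Z\langle X,Y\rangle_M$ term in \eqref{eq:2nd-fund-form}, so $\tfrac12\langle[(f_\alpha)_H,e_k],e_j\rangle_M+\tfrac12\langle[(f_\alpha)_H,e_j],e_k\rangle_M=-S(e_k,e_j,(f_\alpha)_H)$) works out as stated.
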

\proof
We let $\Na^V = (P \ot 1) \Na^M P : \Ga^\infty(M,T_V M) \to \Ga^\infty(M, T_V M \ot T^* M)$ denote the compression of the Levi--Civita connection on $M$ to vertical vector fields (thus, $P : \sX(M) \to \sX(M)$ denotes the orthogonal projection with image $\sX_V(M) \subseteq \sX(M)$).

We insert the definition of $D_V$ and $1 \ot_\nabla D_B$ in the above expression and compute
\begin{align*}
& [D_V \otimes 1, 1 \otimes_\nabla D_B]
= - \sum_{j,\alpha} \left[ c_V(e_j) \nabla^{\sE_V}_{e_j} , \nabla_{f_\alpha}^X \right] \otimes  c_B(f_\alpha)\\
& \q =  \sum_{j,\alpha} \Big( c_V(\nabla^V_{(f_\alpha)_H} (e_j)) \nabla^{\sE_V}_{e_j}
- c_V(e_j) \left[ \nabla^{\sE_V}_{e_j},  \nabla_{(f_\alpha)_H}^{\sE_V} \right] \\
& \qqq -\frac{1}{2} c_V(e_j) e_j \big( k((f_\alpha)_H) \big) \Big) \otimes  c_B(f_\alpha)  \, .
\end{align*}
We consider the second term after the last equality sign, for which 
\[
\left[ \nabla^{\sE_V}_{e_j},  \nabla_{(f_\alpha)_H}^{\sE_V} \right] =   \nabla^{\sE_V}_{[e_j,(f_\alpha)_H]} + \Omega^{\sE_V}( e_j, (f_\alpha)_H)
\]
in terms of the curvature form $\Omega^{\sE_V}$ of the connection $\nabla^{\sE_V}$.

For each $\al \in \{1,\ldots,\dim(B)\}$, we proceed by computing the first-order differential operator
\begin{align*}
&\sum_j \big( c_V(\nabla^V_{(f_\alpha)_H} (e_j)) \nabla^{\sE_V}_{e_j} + c_V(e_j) \nabla^{\sE_V}_{[(f_\alpha)_H,e_j]} \big)\\
& \qquad = 
\sum_{j,k} c_V(e_j) \left( \binn{\nabla_{(f_\alpha)_H}^M( e_k), e_j}_M + \binn{[(f_\alpha)_H,e_j], e_k}_M \right) \nabla_{e_k}^{\sE_V}\\
&\qquad = 
\sum_{j,k} c_V(e_j) \left(  \frac{1}{2} \binn{ [(f_\alpha)_H, e_k], e_j }_M +\frac{1}{2} \binn{ [(f_\alpha)_H, e_j], e_k }_M\right)  \nabla_{e_k}^{\sE_V} \, ,
\end{align*}
where we used Koszul's formula for the Levi--Civita connection $\Na^M$ on $M$, together with the fact that the Lie-bracket $[ (f_\alpha)_H, e_j]$ is a \emph{vertical} vector field for all $\al,j$, see \cite[Lemma 1]{KS16}. When expressed in terms of the second fundamental form of Equation \eqref{eq:2nd-fund-form} this leads to the desired formula. 
\endproof

As a consequence of the above lemma we obtain that the commutator $[D_V \ot 1, 1 \ot_\Na D_B]$ is relatively bounded by $D_V \ot 1$ \emph{on compact subsets of $M$}.

\begin{lma}\label{l:commesti}
Suppose that $K \subseteq M$ is a compact subset. Then there exists a constant $C > 0$ such that
\[
\| [ D_V \ot 1, 1 \ot_\Na D_B](\eta) \| \leq C \cd \big( \| \eta \| + \| (D_V \ot 1)(\eta) \| \big) \, ,
\]
for all $\eta \in \sE_V^c \ot_{C_c^\infty(B)} \sE_B^c$ with support contained in $K$.
\end{lma}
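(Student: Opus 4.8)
The plan is to read off the required estimate directly from the explicit local formula for the commutator $[D_V \ot 1, 1 \ot_\Na D_B]$ established in Lemma \ref{lma:commutator-Kuc}. That formula expresses the commutator, applied to $\xi \ot r \in \sE_V^c \ot_{C_c^\infty(B)} \sE_B^c$, as a finite sum of terms of two shapes: terms of the form $\big(c_V(e_j)\nabla^{\sE_V}_{e_k}\big)(\xi) \ot c_B(f_\al)(r)$ multiplied by the smooth function $S(e_k,e_j,(f_\al)_H)$, and terms of the form $c_V(e_j)\,T_{j,\al}(\xi)\ot c_B(f_\al)(r)$ where $T_{j,\al} := \Omega^{\sE_V}(e_j,(f_\al)_H) + \tfrac12 e_j\big(k((f_\al)_H)\big)$ is a smooth bundle endomorphism of $E_V$. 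The first family of terms is first-order in $\xi$ (through $\nabla^{\sE_V}_{e_k}$), while the second is zeroth-order, so the whole commutator is a first-order differential operator in the vertical direction with smooth coefficients.

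The key point is that everything is localized: by hypothesis $\eta$ has support in the fixed compact set $K \subseteq M$. First I would choose a finite cover of $K$ by charts of the type described in Assumption \ref{a:localI}, over each of which $E_V$ trivializes and the vertical vector fields $e_j$, the horizontal lifts $(f_\al)_H$, the second fundamental form $S$, the curvature $\Omega^{\sE_V}$ and the mean curvature $k$ all have bounded smooth coefficients; using a subordinate partition of unity reduces the estimate to the case where $\eta$ is supported in one such chart. On such a chart the coefficients $S(e_k,e_j,(f_\al)_H)$ and the endomorphisms $T_{j,\al}$ are bounded (as continuous functions on a relatively compact set), so there is a constant $C_1$, depending only on $K$, with
\[
\big\| [D_V\ot 1, 1\ot_\Na D_B](\eta) \big\|
\le C_1 \cd \Big( \sum_{j,k}\big\| (c_V(e_j)\nabla^{\sE_V}_{e_k})(\eta) \big\| + \|\eta\| \Big).
\]
It then remains to bound $\sum_{j,k}\big\|(c_V(e_j)\nabla^{\sE_V}_{e_k})(\eta)\big\|$ by $C_2(\|\eta\| + \|(D_V\ot 1)(\eta)\|)$. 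Since $c_V(e_j)$ is fiberwise unitary (up to a bounded factor coming from the frame), this reduces to estimating $\sum_k \|\nabla^{\sE_V}_{e_k}(\eta)\|$ in terms of $\|\eta\|$ and $\|D_V(\eta)\|$, i.e. to a local elliptic (Gårding-type) estimate for the vertically elliptic operator $\sD_V$. This is exactly the content of the uniform Gårding inequality in Assumption \ref{a:localII}(3) together with Lemma \ref{l:domain}, applied after transporting $\sD_V$ through the unitary $\al$ of the chart: Lemma \ref{l:domain} gives $\|\xi\|_{\Na} \le C_0^{-1}\|\xi\|_{D^\al_x}$ uniformly in $x$, which is precisely the statement that the full vertical gradient is controlled by the vertical Dirac operator plus the $L^2$-norm, pointwise in the base and uniformly over the chart, and hence at the level of the Hilbert $C^*$-module $X$.

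I expect the main obstacle to be the bookkeeping needed to pass from the \emph{pointwise-in-$B$} elliptic estimates (Assumption \ref{a:localII}, Lemma \ref{l:domain}, which are about the operators $\sD^\al_x$ on $L^2(\mathbb{B}_\de(0))^{\op k}$ for fixed $x$) to the \emph{global} estimate on $X$ asserted in the lemma; one must check that the constants in Lemma \ref{l:domain} are genuinely uniform in $x \in \pi(U)$ (which they are, by construction of Assumption \ref{a:localII}) and that $\|\eta\|_X^2 = \int_B \|\eta(x)\|_{L^2}^2$ and $\|(D_V\ot 1)\eta\|_X^2 = \int_B \|D_V^x \eta(x)\|_{L^2}^2$ allow one to integrate the pointwise bounds. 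Once this fibering argument is in place, combining it with the boundedness of the coefficient functions over $K$ and the partition of unity yields the claimed constant $C$, depending only on $K$. Everything else is routine, since $D_V \ot 1$ restricted to $\sE_V^c \ot_{C_c^\infty(B)} \sE_B^c$ acts as $\sD_V$ on the first leg.
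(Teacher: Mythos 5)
Your proposal is correct and takes essentially the same route as the paper: both use the explicit formula of Lemma \ref{lma:commutator-Kuc} together with the boundedness of the coefficients ($S$, $\Omega^{\sE_V}$ and the mean-curvature term) on the compact set to reduce the estimate to controlling the vertical covariant derivatives $(\Na^{\sE_V}_{e_j} \ot 1)(\eta)$ by $\| \eta \| + \| (D_V \ot 1)(\eta)\|$, which is a G\aa rding-type inequality for the vertically elliptic operator. The only cosmetic difference is that the paper invokes G\aa rding's inequality directly for $D_V \ot 1$ viewed (via $W$ from Proposition \ref{p:uniisom}) as a vertically elliptic first-order operator on $L^2(\sE_M^c)$ supported near $K$, whereas you re-derive the same estimate fiberwise from the uniform G\aa rding inequality of Assumption \ref{a:localII} and Lemma \ref{l:domain} and then integrate over the base --- a more roundabout but equivalent implementation.
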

\begin{proof}
Throughout this proof, we will suppress the left $C^\infty(M)$-module isomorphism $W : \sE_V^c \ot_{C_c^\infty(B)} \sE_B^c \to \sE_M^c$ from Proposition \ref{p:uniisom}.

Without loss of generality, suppose that $K \subseteq U$, where $U \subseteq M$ is an open subset supporting an orthonormal frame $\{e_j\}$ for $(T_V M)|_U$ and where $\pi(U) \subseteq B$ supports an orthonormal frame $\{f_\al\}$ for $(TB)|_{\pi(U)}$.

Recall from Proposition \ref{p:unbkas} that $\sD_V : \sE_V \to \sE_V$ is a vertically elliptic first-order differential operator. In particular, we have that $\sD_V \ot 1$ corresponds to a vertically elliptic first-order differential operator on $\sE_M$. It therefore follows from G\aa rding's inequality that there exists a constant $C_0 > 0$ such that
\[
\sum_{j = 1}^{\dim(F)} \| (\Na^{\sE_V}_{e_j} \ot 1)(\eta) \|
\leq C_0 \cd \big( \| \eta \| + \| (D_V \ot 1)(\eta) \| \big) \, ,
\]
for all $\eta \in \sE_V^c \ot_{C_c^\infty(B)} \sE_B^c$ with support contained in $K \subseteq U$.

By Lemma \ref{lma:commutator-Kuc} there exist $A_1,\ldots,A_{\dim(F)},B \in \Ga^\infty\big(U, \Tex{End}(E_M) |_U \big)$ such that
\[
[D_V \ot 1, 1 \ot_{\Na} D_B](\eta) = \sum_{j = 1}^{\dim(F)} A_j (\Na^{\sE_V}_{e_j} \ot 1)(\eta) + B(\eta) \, ,
\]
for all $\eta \in \sE_V^c \ot_{C_c^\infty(B)} \sE_B^c$ with support contained in $K \subseteq U$.

This proves the present lemma.
\end{proof}

\subsection{Factorization of the Dirac operator}

The tensor sum we are after is given by the symmetric unbounded operator
\[
\begin{split}
(D_V \ti_{\Na} D_B)_0 & := ( D_V \otimes 1 )_0 + (\ga_X \ot 1)(1 \otimes_\nabla D_B)_0  \\ 
& \qq : \dom(  D_V \ti_\Na D_B)_0 \to X \hot_{C_0(B)} L^2(\sE_{B}^c) \, ,
\end{split}
\]
where the domain is the image of $\sE_V^c \otimes_{C^\infty_c(B)} \sE_{B}^c$ in $X \hot_{C_0(B)} L^2(\sE_{B}^c)$ and where $\ga_X : X \to X$ denotes the $\zz/2\zz$-grading operator on $X$. The closure of the symmetric unbounded operator $(D_V \ti_{\Na} D_B)_0$ will be denoted by $D_V \times_\nabla D_B$.

We are going to compare this tensor sum with the Dirac operator on the spin$^c$ manifold $M$. As mentioned earlier, these two unbounded operators agree up to an explicit error term given by the curvature form of the proper Riemannian submersion $\pi : M \to B$. We recall that this curvature form $\Om \in \Ga^\infty( M, T_H^* M \we T_H^* M \ot T_V^* M)$ is defined by
\[
\Om(X,Y,Z) := \inn{ [X,Y],Z}_M
\]
for any real horizontal vector fields $X,Y$ and any real vertical vector field $Z$. We represent this curvature form as an endomorphism of $\sE_M$ via the Clifford multiplication $c_M : \sX(M) \to \Tex{End}_{C^\infty(M)}(\sE_M)$ as follows:
\[
\begin{split}
& c : \Ga^\infty( M, T_H^* M \we T_H^* M \ot T_V^* M) \to \Tex{End}_{C^\infty(M)}(\sE_M) \\ 
& c( \om_1 \we \om_2 \ot \om_3) :=\big[ c_M(\om_1^\sharp), c_M(\om_2^\sharp)\big] \cd c_M(\om_3^\sharp) \, .
\end{split}
\]
Notice that the sharps refer to the musical isomorphisms $\sharp : \Om^1_H(M) \to \sX_H(M)$ and $\sharp : \Om^1_V(M) \to \sX_V(M)$. We emphasize that the corresponding operator $c(\Om) : \sE_M^c \to L^2(\sE_M^c)$ can be \emph{unbounded}.

We recall that the Dirac operator on $M$ is defined by the local expression
\[
\sD_M = \sum_{k = 1}^{\dim(M)} c_M\big( (dx_k)^\sharp \big) \Na^{\sE_M}_{\pa/\pa x_k} : \sE_M \to \sE_M  \, .
\]
As usual, we let $D_M : \dom(D_M) \to L^2(\sE_M^c)$ denote the closure of the symmetric unbounded operator $(D_M)_0 : \sE_M^c \to L^2(\sE_M^c)$ induced by $\sD_M$. 

Let $\ga_B : L^2(\sE_B^c) \to L^2(\sE_B^c)$ denote the grading operator on $L^2(\sE_B^c)$. The grading operator on $X \hot_{C_0(B)} L^2(\sE_B^c)$ is then given by $\ga := \ga_X \ot \ga_B$. We define the even selfadjoint unitary isomorphism
\[
\Ga :=  (\ga_X \ot 1) \frac{ 1 + \ga }{2} + \frac{1 - \ga}{2} : X \hot_{C_0(B)} L^2(\sE_B^c)
\to X \hot_{C_0(B)} L^2(\sE_B^c) \, .
\]

\begin{prop}
\label{prop:tensor-sum}
Under the even unitary isomorphism given by $ W \Ga: X \hot_{C_0(B)} L^2(\sE_{B}^c) \to L^2(\sE_M^c)$ we have the identity
\begin{equation}\label{eq:tensum}
W \Ga ( D_V \times_\nabla D_B) \Ga W^* = \ov{ (D_{M})_0 - \frac{i}{8} c(\Omega) }  \, .
\end{equation}
%
\end{prop}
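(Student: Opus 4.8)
The strategy is to verify the identity \eqref{eq:tensum} on the core $\sE_V^c \otimes_{C^\infty_c(B)} \sE_B^c$ (whose image in $L^2(\sE_M^c)$ under $W$ is $\sE_M^c$), since both sides are closures of their restrictions to this core: the right-hand side by definition, and the left-hand side because $\Ga$ and $W$ are bounded even unitaries preserving the relevant cores. So it suffices to prove that, for all $\eta \in \sE_V^c \otimes_{C^\infty_c(B)} \sE_B^c$,
\[
W \Ga ( D_V \times_\nabla D_B)_0 \Ga W^* (\eta) = (D_M)_0(\eta) - \frac{i}{8} c(\Omega)(\eta) \, ,
\]
where I silently identify $\eta$ with its image $W(\eta) \in \sE_M^c$. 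The conjugation by $\Ga$ is just the bookkeeping device that turns the graded tensor sum $D_V \otimes 1 + (\ga_X \ot 1)(1 \ot_\nabla D_B)$ into the ungraded sum $D_V \otimes 1 + 1 \ot_\nabla D_B$ acting on the ungraded tensor product; this is the same computation as in \cite[proof of Theorem 21]{KS16}, and I would dispatch it first.

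\textbf{Main computation.} The heart of the matter is a purely local identity between first-order differential operators on $\sE_M$. Fix a point $p \in M$ and an open neighbourhood $U \ni p$ carrying an orthonormal frame $\{e_j\}_{j=1}^{\dim(F)}$ of real vertical vector fields on $U$ and such that $\pi(U)$ carries an orthonormal frame $\{f_\al\}_{\al=1}^{\dim(B)}$ of real vector fields; write $(f_\al)_H$ for the horizontal lifts, so that $\{e_j\} \cup \{(f_\al)_H\}$ is a local orthonormal frame for $TM|_U$. Under the identifications $\End_{C^\infty(M)}(\sE_H) \cong \Cl_H(M) \subseteq \Cl(M) \cong \End_{C^\infty(M)}(\sE_M)$ and the isomorphism $W$, one has $c_V(e_j) = c_M(e_j)$, $c_B(f_\al) = c_M((f_\al)_H)$, and the connections are related by $\nabla^{\sE_M}_{e_j} = \nabla^{\sE_V}_{e_j} + (\text{horizontal Clifford terms})$ and $\nabla^{\sE_M}_{(f_\al)_H} = \nabla^X_{f_\al} + (\text{vertical terms involving } S \text{ and } k)$, exactly as spelled out in \cite[Section 3]{KS16}. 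Substituting the Dirac operator $\sD_M = \sum_j c_M(e_j)\nabla^{\sE_M}_{e_j} + \sum_\al c_M((f_\al)_H)\nabla^{\sE_M}_{(f_\al)_H}$ (after rewriting it in the frame $\{e_j\}\cup\{(f_\al)_H\}$) and collecting terms, the leading parts reproduce $i\sum_j c_V(e_j)\nabla^{\sE_V}_{e_j} \otimes 1 = D_V \otimes 1$ and $\sum_\al 1 \otimes_\nabla$-type terms giving $1 \ot_\nabla D_B$; the remaining zeroth- and first-order correction terms must be shown to assemble precisely into $-\tfrac{i}{8}c(\Omega)$. The key structural inputs are: (i) $[(f_\al)_H, e_j]$ is vertical (\cite[Lemma 1]{KS16}), (ii) Koszul's formula identifies the horizontal derivative of vertical frame vectors in terms of the second fundamental form $S$, (iii) $[(f_\al)_H,(f_\beta)_H]$ has vertical part governed by the curvature form $\Omega$, and (iv) the first-order correction terms produced by reordering the Clifford elements cancel against the $S$- and $k$-dependent terms in the commutator $[D_V \otimes 1, 1\ot_\nabla D_B]$ computed in Lemma \ref{lma:commutator-Kuc} — though in fact for this proposition one does not need the commutator, only the direct expansion. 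A clean way to organize this is to write $\sD_M = \sD_V' + \sD_H'$ in the adapted frame, where $\sD_V'$ collects the $e_j$-derivatives and $\sD_H'$ the $(f_\al)_H$-derivatives, compare $\sD_V'$ with $D_V \otimes 1$ and $\sD_H'$ with $1 \ot_\nabla D_B$, and show the total discrepancy is the frame-independent endomorphism $-\tfrac{i}{8}c(\Omega)$.

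\textbf{Expected obstacle.} The main difficulty is bookkeeping: carefully tracking all the correction terms arising from (a) expressing $\nabla^{\sE_M}$ in terms of $\nabla^{\sE_V}$ and $\nabla^X$ across both vertical and horizontal directions, (b) the difference between the Levi--Civita connection on $M$ and its vertical compression $\nabla^V$, and (c) reordering Clifford multiplications $c_M((f_\al)_H)$ past $c_V(e_j)$ and $\nabla^{\sE_V}_{e_j}$, and then verifying that everything except the announced curvature term cancels. The numerical coefficient $\tfrac{1}{8}$ in front of $c(\Omega)$ will emerge from these cancellations and the definition of $c$ via $c(\om_1 \we \om_2 \ot \om_3) = [c_M(\om_1^\sharp), c_M(\om_2^\sharp)]c_M(\om_3^\sharp)$; getting it right requires being scrupulous about antisymmetrizations and factors of $\tfrac12$. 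Since this is the non-compact analogue of \cite[Theorem 21]{KS16}, the algebraic identity itself is essentially known, and the genuinely new content is merely that it descends to an identity of \emph{closed} unbounded operators on Hilbert $C^*$-modules — which is handled by the density/core argument in the first paragraph together with the relative-boundedness estimate of Lemma \ref{l:commesti} to ensure the error term $c(\Omega)$, restricted to the core, does not enlarge the domain beyond that of $\ov{(D_M)_0}$.
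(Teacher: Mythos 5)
Your proposal follows essentially the same route as the paper: conjugate by $\Ga$, invoke the local computation of \cite[Theorem 23]{KS16} to obtain $W \Ga ( D_V \times_\nabla D_B) \Ga W^*(\xi) = \big(D_M - \frac{i}{8} c(\Omega)\big)(\xi)$ for $\xi \in \sE_M^c$, and conclude because $\sE_M^c$ is a core for both closed operators in Equation \eqref{eq:tensum}. Two small corrections: the $\Ga$-conjugation gives $\Ga (D_V \times_\nabla D_B) \Ga = D_V \ot \ga_B + 1 \ot_\Na D_B$ on the core, not $D_V \ot 1 + 1 \ot_\Na D_B$ (the factor $\ga_B$ is exactly what makes the Clifford signs work out under $W$), and the final closure step requires no relative-boundedness input from Lemma \ref{l:commesti} --- since both sides are by construction closures of their restrictions to the common core $\sE_M^c$, agreement there already identifies them.
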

\proof
 We first notice that
\[
\Ga ( D_V \times_\nabla D_B) \Ga (\eta) = (D_V \ot \ga_B)(\eta) + (1 \ot_\Na D_B)(\eta) \, , 
\]
for all $\eta \in \sE_V^c \ot_{C_c^\infty(B)} \sE^c_B$. As in the proof of \cite[Theorem 23]{KS16} we then establish that
\[
W \Ga ( D_V \times_\nabla D_B) \Ga W^*(\xi) = \big(  D_{M} - \frac{i}{8} c(\Omega) \big)(\xi)
\]
for all $\xi \in \sE_M^c$. The result of the proposition now follows since $\sE_M^c$ is a core for both of the unbounded operators appearing in Equation \eqref{eq:tensum}.
\endproof

\begin{lma}\label{l:funcla}
The triple $\big( C_c^\infty(M), X \hot_{C_0(B)} L^2( \sE_B^c), D_V \ti_\Na D_B \big)$ is an even half-closed chain over $C_0(M)$. Moreover, this even half-closed chain represents the fundamental class $[M]$ in $KK_0(C_0(M),\C)$.
\end{lma}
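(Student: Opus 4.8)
The plan is to leverage Proposition \ref{prop:tensor-sum}, which already identifies the tensor sum $D_V \times_\nabla D_B$ (up to the unitary $W\Ga$) with the closure of $(D_M)_0 - \frac{i}{8} c(\Omega)$. Thus it suffices to show two things: first, that the operator $(D_M)_0 - \frac{i}{8} c(\Omega)$ (equivalently $D_V \times_\nabla D_B$) defines an even half-closed chain over $C_0(M)$; and second, that this half-closed chain represents the same class $[M] \in KK_0(C_0(M),\C)$ as the Dirac operator $D_M$ itself, which in turn is known (as recalled in the discussion around \cite{BDT89,Hil10}) to represent the fundamental class of the spin$^c$ manifold $M$.

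For the half-closed chain property, I would verify the defining axioms: one needs a dense $*$-subalgebra (here $C_c^\infty(M)$) preserving the domain of the closed symmetric operator, with bounded commutators, and local compactness of the resolvent $(i + D_V \times_\nabla D_B)^{-1}$ after multiplication by $C_c^\infty(M)$. The domain preservation and bounded commutators follow by transporting along $W\Ga$: for $f \in C_c^\infty(M)$, the commutator $[(D_M)_0 - \frac{i}{8} c(\Omega), f]$ equals $[(D_M)_0, f]$ since $c(\Omega)$ is $C^\infty(M)$-linear (it is an endomorphism of $\sE_M$), and $[(D_M)_0, f] = c_M((df)^\sharp)$ is bounded — so the relevant commutator bounds are inherited from those of the Dirac operator $D_M$. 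For local compactness, I would write $(i + D_V\times_\nabla D_B)^{-1}$ and compare it with $(i + D_M)^{-1}$; since the difference $D_V \times_\nabla D_B - (W\Ga)^* D_M (W\Ga)$ corresponds to $-\frac{i}{8} c(\Omega)$, which is a (possibly unbounded) symmetric operator that is, however, relatively bounded by $D_M$ on compact subsets (this should follow by a Gårding-type argument as in Lemma \ref{l:commesti}, since $c(\Omega)$ is a bounded endomorphism on any relatively compact open set), one can use a resolvent identity and the local compactness of $m(f)(i + D_M)^{-1}$ — the latter being a standard consequence of Rellich's lemma, exactly as in the proof of Proposition \ref{p:localcompactII}.

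To identify the KK-class, the natural strategy is to produce an explicit operator homotopy (or a bounded perturbation argument at the level of Kasparov's bounded picture) between the bounded transforms of $D_M$ and of $D_M - \frac{i}{8}c(\Omega)$. Since $c(\Omega)$ is $C^\infty(M)$-linear and (locally) relatively bounded by $D_M$, the family $(D_M)_0 - \frac{it}{8} c(\Omega)$, $t \in [0,1]$, should interpolate through half-closed chains with the same underlying module and algebra action, giving a homotopy of the associated KK-cycles; hence all represent the same class, namely $[D_M] = [M]$. Alternatively, one invokes the general principle that a bounded (or relatively compact, locally) $C^\infty(M)$-linear symmetric perturbation of a half-closed chain does not change its class in KK-theory — this is essentially the statement that the curvature term $\Omega$ is "invisible at the level of bounded KK-theory," which is emphasized repeatedly in the introduction.

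The main obstacle I anticipate is the local compactness / relative boundedness analysis for the curvature term $c(\Omega)$: although $c(\Omega)$ is an endomorphism-valued first-order-type expression built from Clifford multiplications, it can genuinely be unbounded globally (as the paper stresses), so one must carefully localize — choosing, for each point, a relatively compact neighborhood on which $\Omega$ and hence $c(\Omega)$ is bounded — and then patch these local estimates together to conclude both the half-closed chain axioms and the homotopy invariance. Handling the non-completeness of $M$ (so that $D_M$ need not be selfadjoint, only a half-closed chain) throughout this localization, and ensuring the homotopy stays within half-closed chains rather than spectral triples, is the delicate part; the regularity of $D_V \times_\nabla D_B$ itself presumably follows from Proposition \ref{p:regular} applied to $D_V$ together with selfadjointness-and-regularity stability under the tensor sum construction, but this too must be checked in the non-complete setting.
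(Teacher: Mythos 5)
Your reduction via Proposition \ref{prop:tensor-sum} to comparing $D_M$ with the closure of $(D_M)_0 - \frac{i}{8}c(\Omega)$ is exactly the paper's first step, and the half-closed chain property is obtained there simply because this operator is again an odd, symmetric, elliptic first-order differential operator, so the general results of \cite{BDT89,Hil10} apply; your more hands-on verification of the axioms is in that part essentially harmless, though your proposed comparison of resolvents $(i+D)^{-1}$ needs care, since neither operator is selfadjoint on the non-complete manifold $M$ and these resolvents are not everywhere-defined bounded operators.

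The genuine gap is in the identification of the KK-class. You propose either the straight-line homotopy $(D_M)_0 - \frac{it}{8}c(\Omega)$, $t\in[0,1]$, or a ``general principle'' that a $C^\infty(M)$-linear symmetric perturbation which is only locally bounded does not change the class. Neither is available off the shelf: since $c(\Omega)$ is globally unbounded and $D_M$ is merely symmetric, a family of closures of symmetric operators does not automatically yield a homotopy of Kasparov modules --- one would have to prove continuity of the bounded transforms in $t$, or construct an honest cycle over $C([0,1])$, and that is precisely the nontrivial analytic content you are assuming away. Indeed, the paper's remark following the lemma stresses that even the locally bounded perturbation results of van den Dungen \cite{Dun16} do not apply here, because an adequate approximate identity for $D_M$ is lacking. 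What the paper actually does is localize: it chooses a strictly positive $x\in C_0(M)$ with $x\cdot C_0(M)$ dense such that $\overline{c(\Omega)}\,x$ is bounded, replaces the two operators by the \emph{essentially selfadjoint} localizations $\overline{xD_Mx}$ and $\overline{xD_Mx}-\frac{i}{8}x\overline{c(\Omega)}x$ (using \cite[Proposition 11]{KS17a}), invokes \cite[Theorems 13 and 19]{KS17a} to see that passing to these localized spectral triples does not change the KK-classes, and only then concludes by a genuinely \emph{bounded} perturbation argument. Without this localization step, or an explicit proof that your homotopy of symmetric operators induces a homotopy of the associated bounded Kasparov modules, your argument that $\overline{(D_M)_0-\frac{i}{8}c(\Omega)}$ represents $[M]$ is incomplete.
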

\begin{proof}
By Proposition \ref{prop:tensor-sum}, $(D_V \ti_\Na D_B)_0$ is unitarily equivalent to the odd, symmetric and elliptic, first-order differential operator $(D_M)_0 - \frac{i}{8} c(\Omega) : \sE_M^c \to L^2(\sE_M^c)$. This establishes that the triple 
\[
\big( C_c^\infty(M), X \hot_{C_0(B)} L^2( \sE_B^c), D_V \ti_\Na D_B \big)
\]
is an even half-closed chain, see \cite{BDT89,Hil10}.

To end the proof, we need to show that the even half-closed chains 
\begin{equation}\label{eq:locbou}
\begin{split}
& \big( C_c^\infty(M), L^2(\sE_M^c), D_M \big)  \q \Tex{and} \\
& \big( C_c^\infty(M), L^2(\sE_M^c), \ov{ (D_M)_0 - \frac{i}{8} c(\Omega)} \big)
\end{split}
\end{equation}
represent the same class in $KK_0(C_0(M),\C)$. 

For a function $x \in C_0(M)$, we let $\inn{x, C_c^\infty(M)} \subseteq C_0(M)$ denote the smallest $*$-subalgebra of $C_0(M)$ containing both $C_c^\infty(M)$ and $x$. We then choose a positive function $x \in C_0(M)$ such that $x \cd C_0(M) \subseteq C_0(M)$ is norm-dense and such that
\[
\begin{split}
& \big( \inn{x, C_c^\infty(M)}, L^2(\sE_M^c), D_M \big) \q \Tex{and} \\
& \big( \inn{x, C_c^\infty(M)}, L^2(\sE_M^c), \ov{ (D_M)_0 - \frac{i}{8} c(\Omega)} \big)
\end{split}
\] 
are still even half-closed chains from $C_0(M)$ to $\C$. Moreover, we may arrange that
\[
x (\ov{ (D_M)_0 - \frac{i}{8} c(\Omega)})x (\xi) = x D_M x(\xi) - \frac{i}{8} x \ov{c(\Om)} x(\xi)
\]
for all $\xi \in \sE_M^c$ and that $\ov{c(\Om)} x$ is a bounded operator on $L^2(\sE_M^c)$. Clearly, the passage from $C_c^\infty(M)$ to $\inn{x, C_c^\infty(M)}$ does not change the corresponding classes in KK-theory. 

We now localize our symmetric unbounded operators with respect to the positive function $x$, obtaining the {\em essentially selfadjoint} unbounded operators $x D_M x : \sE_M^c \to L^2(\sE_M^c)$ and $x D_M x - \frac{i}{8} x \ov{c(\Omega)} x : \sE_M^c \to L^2(\sE_M^c)$, see \cite[Proposition 11]{KS17a}. Moreover, we have that
\begin{equation}\label{eq:bounded}
\begin{split}
& \big( \inn{x, C_c^\infty(M)}, L^2(\sE_M^c), \ov{x D_M x}\big) \q \Tex{and} \\\
& \big( \inn{x, C_c^\infty(M)}, L^2(\sE_M^c), \ov{ x D_M x} - \frac{i}{8} x \ov{ c(\Omega)} x  \big)
\end{split}
\end{equation}
are even spectral triples over $C_0(M)$ and that these spectral triples represent the same classes in KK-theory as our original even half-closed chains in Equation \eqref{eq:locbou}, see \cite[Theorem 13 and Theorem 19]{KS17a}. 

But the two spectral triples in Equation \eqref{eq:bounded} clearly represent the same class in KK-theory since the unbounded selfadjoint operators $\ov{x D_M x}$ and $\ov{x D_M x} - \frac{i}{8} x \ov{c(\Om)} x$ are bounded perturbations of each other. This ends the proof of the lemma. \qedhere

%
%
%
\end{proof}

\begin{rem}
The result of the above lemma can not be proved directly using the recent work of van den Dungen, since we are lacking an ``adequate approximate identity'' for the unbounded symmetric operator $D_M : \Tex{Dom}(D_M) \to L^2(\sE_M^c)$, see \cite{Dun16}. Instead we rely on the general localization techniques initiated in \cite{Kaa14} and developped further in \cite{Kaa15,KS17a}.
\end{rem}

For any compact subset $K \subseteq M$, we introduce the subspace
\[
L^2( \sE_M^K) := \big\{ \xi \in L^2(\sE_M^c) \mid \psi \cd \xi = \xi \, , \, \, 
\forall \psi \in C_c^\infty(M) \Tex{ with } \psi|_K = 1 \big\} \, .
\]

\begin{lma}\label{l:domten}
Suppose that $K \subseteq M$ is a compact subset. Then
\[
\dom( D_V \ti_\Na D_B) \cap L^2(\sE_M^K) \subseteq \dom( D_V \ot 1) \, .
\]
Moreover, there exists a constant $C_K > 0$ such that
\[
\| (D_V \ot 1)(\eta) \| \leq C_K \cd \big( \| (D_V \ti_\Na D_B)(\eta) \| + \| \eta \| \big)
\]
for all $\eta \in \dom( D_V \ti_\Na D_B) \cap L^2(\sE_M^K)$.
\end{lma}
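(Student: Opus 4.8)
The plan is to reduce the statement to the local relative-boundedness estimate of Lemma~\ref{l:commesti} via the decomposition $(D_V \ti_\Na D_B)_0 = (D_V \ot 1)_0 + (\ga_X \ot 1)(1 \ot_\Na D_B)_0$ on the common core $\sE_V^c \ot_{C_c^\infty(B)} \sE_B^c$. First I would fix a function $\psi \in C_c^\infty(M)$ with $\psi|_K = 1$ and observe that multiplication by $\psi$ acts as the identity on every $\eta \in L^2(\sE_M^K)$ appearing in the statement; in particular, for $\eta$ in the core with support in $\supp(\psi)$ we can freely insert $\psi$. The key identity is then
\[
(D_V \ot 1)(\eta) = (D_V \ti_\Na D_B)(\eta) - (\ga_X \ot 1)(1 \ot_\Na D_B)(\eta) \, ,
\]
valid on the core, and my goal is to estimate the second term on the right.

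Second, I would treat $(1 \ot_\Na D_B)(\eta)$. Since $\psi \eta = \eta$, we can write $(1 \ot_\Na D_B)(\eta) = (1 \ot_\Na D_B)(\psi \eta)$; using the Leibniz rule for the connection $\nabla^X$ together with the definition of $1 \ot_\Na D_B$, the commutator $[1 \ot_\Na D_B, m(\psi)]$ is a bounded (even bundle-endomorphism) operator on the relevant Hilbert space—this is part of the half-closed chain structure established in Lemma~\ref{l:funcla}. Hence
\[
(1 \ot_\Na D_B)(\eta) = m(\psi)(1 \ot_\Na D_B)(\eta') + [1 \ot_\Na D_B, m(\psi)](\eta) \, ,
\]
where $\eta' = \eta$, so it suffices to bound $m(\psi)(1 \ot_\Na D_B)(\eta)$ by $\| (D_V \ti_\Na D_B)(\eta)\| + \|\eta\|$. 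Pick a second cutoff $\chi \in C_c^\infty(M)$ with $\chi|_{\supp\psi}=1$; applying the commutator identity again we can replace $m(\psi)(1 \ot_\Na D_B)(\eta)$ by $m(\psi)(D_V \ti_\Na D_B)(\eta) - m(\psi)(D_V \ot 1)(\eta) + (\text{bounded})$. This seems circular, so instead I would argue directly: on $L^2(\sE_M^K)$ the operator $1 \ot_\Na D_B$ restricted to the core has support in a fixed compact set, and by Lemma~\ref{l:commesti} the \emph{difference} of the two candidate square roots—namely the commutator term—is controlled by $D_V \ot 1$. Concretely, for $\eta$ in the core with $\supp(\eta) \subseteq K$, Lemma~\ref{l:commesti} gives a constant $C$ with $\|[D_V \ot 1, 1 \ot_\Na D_B](\eta)\| \le C(\|\eta\| + \|(D_V \ot 1)(\eta)\|)$; this lets us iterate the Kucerovsky-type commutator estimate to close the loop.

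The cleanest route, and the one I would actually pursue, is the following. Write $T := D_V \ot 1$ and $S := (\ga_X \ot 1)(1 \ot_\Na D_B)$ on the core, so $D_V \ti_\Na D_B = T + S$ there. For $\eta$ in the core supported in a fixed open $U \supseteq K$ carrying frames $\{e_j\}$, $\{f_\al\}$ as in the proof of Lemma~\ref{l:commesti}, that lemma exhibits $S(\eta) = \sum_j A_j(\nabla^{\sE_V}_{e_j}\ot 1)(\eta) + B(\eta)$ with $A_j, B$ compactly-supported bundle endomorphisms, and Gårding's inequality (again as invoked in the proof of Lemma~\ref{l:commesti}) gives $\sum_j \|(\nabla^{\sE_V}_{e_j}\ot 1)(\eta)\| \le C_0(\|\eta\| + \|T(\eta)\|)$. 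Hence $\|S(\eta)\| \le C_1(\|\eta\| + \|T(\eta)\|)$ with $C_1$ depending only on $K$. But wait—this already presupposes the bound on $\|T(\eta)\|$ we want to prove. The resolution: rearrange as $\|T(\eta)\| \le \|(T+S)(\eta)\| + \|S(\eta)\| \le \|(D_V\ti_\Na D_B)(\eta)\| + C_1\|\eta\| + C_1\|T(\eta)\|$; this is only useful if $C_1 < 1$, which we cannot assume. The honest fix is to shrink $U$ around each point of $K$ so that the sup-norms of the $A_j$ on that neighborhood are small—the $A_j$ come from the second fundamental form $S(e_k,e_j,(f_\al)_H)$ and curvature terms in Lemma~\ref{lma:commutator-Kuc}, which are smooth, hence on a sufficiently small ball the $A_j$-part has norm $\le \tfrac12$—patch with a partition of unity subordinate to a finite cover of $K$, and absorb the bounded $B$-terms and the partition-of-unity commutators into the $\|\eta\|$ term. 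Then density of the core in $\dom(D_V\ti_\Na D_B)$ (with respect to the graph norm) together with closedness of $D_V \ot 1$ upgrades the estimate from the core to all of $\dom(D_V\ti_\Na D_B)\cap L^2(\sE_M^K)$, simultaneously yielding the inclusion into $\dom(D_V\ot 1)$.

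The main obstacle is exactly this absorption step: naively, the commutator estimate of Lemma~\ref{l:commesti} has a constant that need not be $<1$, so one cannot directly solve for $\|(D_V\ot 1)(\eta)\|$. The technical heart is therefore the localization-and-smallness argument—covering $K$ by finitely many charts on which the first-fundamental-form/curvature coefficients $A_j$ are uniformly small, running the Gårding estimate chart by chart, and reassembling with a partition of unity while carefully tracking that the partition-of-unity commutators only contribute lower-order (bounded) terms. A secondary, more routine, point is the density/closedness limiting argument needed to pass from the core to the full intersection domain, and checking that $L^2(\sE_M^K)$ is preserved appropriately under the cutoffs so that all the manipulations stay inside that subspace.
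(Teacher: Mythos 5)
Your central estimate is based on a misreading of Lemma \ref{l:commesti}. What that lemma (via Lemma \ref{lma:commutator-Kuc}) writes as $\sum_j A_j (\Na^{\sE_V}_{e_j} \ot 1) + B$ with endomorphism coefficients is the \emph{commutator} $[D_V \ot 1, 1 \ot_\Na D_B]$, not the operator $S = (\ga_X \ot 1)(1 \ot_\Na D_B)$ itself. The operator $1 \ot_\Na D_B$ differentiates in the \emph{horizontal} direction (it is built from $\nabla^X_{f_\al}$ and $D_B$), so it admits no expression in purely vertical derivatives with bounded coefficients and is not relatively bounded by the vertical operator $D_V \ot 1$ on any compact set; the inequality $\|S(\eta)\| \le C_1(\|\eta\| + \|(D_V\ot 1)(\eta)\|)$ is simply false. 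Your attempted repair of the circularity you noticed — shrinking the charts so that the coefficients $A_j$ have small sup-norm — also does not work: those coefficients are values of the second fundamental form and of curvature terms, which take fixed (generally nonzero) values at the points of $K$; smoothness gives local boundedness, not smallness, on small balls. So the absorption step that your argument hinges on cannot be carried out, and the circularity is never resolved.

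The correct route is shorter and does not decompose the tensor sum at all. By Proposition \ref{prop:tensor-sum}, $(D_V \ti_\Na D_B)_0$ is unitarily equivalent (via $W\Ga$) to $(D_M)_0 - \frac{i}{8} c(\Omega)$ on $\sE_M^c$, which is an \emph{elliptic} first-order differential operator on $M$, while $(D_V \ot 1)_0$ corresponds to some first-order differential operator on $\sE_M^c$. G\aa rding's inequality for the elliptic operator therefore controls all first derivatives of sections supported in a fixed compact set $L$, giving directly
\[
\| (D_V \ot 1)(\eta) \| \leq C_L \cd \big( \| (D_V \ti_\Na D_B)(\eta) \| + \| \eta \| \big)
\]
for all $\eta \in \sE_M^c$ with $\supp(\eta) \subseteq L$ — no smallness of constants is needed, because one is not trying to bound a horizontal piece by a vertical one. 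One then takes a sequence in the core $\sE_M^c$ converging to $\eta \in \dom(D_V \ti_\Na D_B) \cap L^2(\sE_M^K)$ in graph norm; after multiplying by a cutoff equal to $1$ on $K$ (your cutoff idea is the right move here, since the relevant commutators are bounded) the supports lie in a fixed compact $L$, and the estimate together with closedness of $D_V \ot 1$ yields both the domain inclusion and the inequality in the limit.
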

\begin{proof}
Throughout this proof we will suppress the left $C^\infty_c(M)$-module isomorphism $W : \sE_V^c \ot_{C_c^\infty(B)} \sE_B^c \to \sE_M^c$ from Proposition \ref{p:uniisom}.

Let $L \subseteq M$ be a compact subset. Since $(D_V \ti_\Na D_B)_0 : \sE_M^c \to L^2(\sE_M^c)$ is induced by an elliptic first-order differential operator and since $(D_V \ot 1)_0 : \sE_M^c \to L^2(\sE_M^c)$ is induced by a first-order differential operator we may apply G\aa rding's inequality to find a constant $C_L > 0$ such that
\begin{equation}\label{eq:gaarding}
\| (D_V \ot 1)(\eta) \| \leq C_L \cd \big( \| (D_V \ti_\Na D_B)(\eta) \| + \| \eta \| \big)
\end{equation}
for all $\eta \in \sE_M^c$ with support contained in $L$.

Let now $\eta \in \dom( D_V \ti_\Na D_B) \cap L^2(\sE_M^K)$. Since $\sE_M^c$ is a core for $D_V \ti_\Na D_B$ we may find a sequence $\{\eta_n\}$ in $\sE_M^c$ such that $\eta_n \to \eta$ and $(D_V \ti_\Na D_B)(\eta_n) \to (D_V \ti_\Na D_B)(\eta)$ in the norm on $L^2(\sE_M^c)$. Moreover, since $\eta \in L^2(\sE_M^K)$ we may suppose, without loss of generality, that there exists a compact subset $L \subseteq M$ such that the support of $\eta_n$ is contained in $L$ for all $n \in \nn$.

The result of the lemma now follows from Equation \eqref{eq:gaarding}.
\end{proof} 

Finally, we establish that the tensor sum half-closed chain 
\[
( C_c^\infty(M), X \hot_{C_0(B)} L^2(\sE_B^c), D_V \times_\nabla D_B )
\]
is indeed an unbounded representative of the Kasparov product of the corresponding classes in bounded KK-theory. Note that the relevant Kasparov product and KK-groups are the following:
\[
\hot_{C_0(B)} : KK_0(C_0(M), C_0(B)) \ti KK_0(C_0(B),\C) \to KK_0(C_0(M),\C) \, . 
\]
We are thus going to prove the identity
\[
[ X, F_{D_V}] \hot_{C_0(B)} [ L^2(\sE_B^c), F_{D_B}]
= [X \hot_{C_0(B)} L^2(\sE_B^c), F_{D_V \times_\nabla D_B}]
\]
in $KK_0(C_0(M),\C)$, where $F_D := D(1 + D^* D)^{-1/2} : E \to E$ denotes the bounded transform of a symmetric and regular unbounded operator $D : \Tex{Dom}(D) \to E$.

The proof will be based on a generalization to half-closed chains of a theorem by Kucerovsky \cite[Theorem 13]{Kuc97}, which we proved recently in \cite{KS17a} (see Appendix \ref{app:kucerovsky} Theorem \ref{t:kuce} for the main result). 

\begin{thm}
\label{thm:fact-KK}
Suppose that $\pi : M \to B$ is a proper Riemannian submersion of even dimensional spin$^c$ manifolds. Then the even half-closed chain $(C^\infty_c(M), L^2(\sE_M^c), D_M)$ is the unbounded Kasparov product of the even unbounded Kasparov module $(C^\infty_c(M), X, D_V)$ with the even half-closed chain $(C^\infty_c(B), L^2(\sE_B^c), D_B)$ up to the curvature term $- \frac{i}{8} c(\Omega) : \sE_M^c \to L^2(\sE_M^c)$. 
\end{thm}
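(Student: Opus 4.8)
The plan is to verify the hypotheses of the Kucerovsky-type criterion for half-closed chains (Theorem \ref{t:kuce} in Appendix \ref{app:kucerovsky}) for the triple of chains in question. Recall that to show $(C^\infty_c(M), X \hot_{C_0(B)} L^2(\sE_B^c), D_V \times_\nabla D_B)$ represents the interior product of $(C^\infty_c(M), X, D_V)$ with $(C^\infty_c(B), L^2(\sE_B^c), D_B)$, Kucerovsky's conditions require: (a) a \emph{connection condition} (the domain condition), saying that for a dense set of $\xi \in \sE_V^c \subseteq X$ the operator $\begin{pmatrix} 0 & T_\xi^* \\ T_\xi & 0 \end{pmatrix}$, where $T_\xi : L^2(\sE_B^c) \to X \hot_{C_0(B)} L^2(\sE_B^c)$ is the creation operator $r \mapsto \xi \hot r$, maps $\dom(D_B)$ into $\dom(D_V \times_\nabla D_B)$ and the graded commutator $[D_V \times_\nabla D_B, \begin{pmatrix} 0 & T_\xi^* \\ T_\xi & 0\end{pmatrix}]$ is bounded; (b) a \emph{domain dominance condition}, $\dom(D_V \times_\nabla D_B) \subseteq \dom(D_V \ot 1)$ after localizing by compactly supported functions; and (c) a \emph{positivity (semiboundedness) condition}, that $\langle (D_V \ot 1)\eta, (D_V \times_\nabla D_B)\eta \rangle + \langle (D_V \times_\nabla D_B)\eta, (D_V\ot 1)\eta\rangle \geq -C\langle \eta,\eta\rangle$ for $\eta$ in a core, again localized.

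First I would reduce everything to the local, compactly-supported setting via the function $x \in C_0(M)$ of Lemma \ref{l:funcla}, so that the curvature term $\ov{c(\Omega)}x$ becomes bounded and all the symmetric operators become essentially selfadjoint; by Lemma \ref{l:funcla} the chain $(C^\infty_c(M), X\hot_{C_0(B)}L^2(\sE_B^c), D_V\times_\nabla D_B)$ already represents $[M] = [D_M]$, so it suffices to check that this chain is \emph{some} unbounded representative of the Kasparov product $[D_V]\hot_{C_0(B)}[D_B]$; the curvature term is then simply recorded as the discrepancy with $D_M$ via Proposition \ref{prop:tensor-sum}. Next I would verify the connection condition: for $\xi \in \sE_V^c$, the creation operator $T_\xi$ sends $\sE_B^c$ into $\sE_V^c \ot_{C^\infty_c(B)} \sE_B^c$, which lies in the core of $D_V\times_\nabla D_B$, and the graded commutator $[D_V\times_\nabla D_B, T_\xi]$ on this core splits into a piece coming from $D_V \ot 1$ (bounded because $\sD_V$ differentiates $\xi$, which is fixed and compactly supported) and a piece coming from $1\ot_\nabla D_B$, which by the Leibniz rule for the metric connection $\nabla^X$ produces the Clifford-contracted covariant derivative $\sum_\alpha c_B(f_\alpha)\ot \nabla^X_{f_\alpha}(\xi)$ — again bounded since $\xi$ is compactly supported. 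For the domain dominance condition (b) I would invoke Lemma \ref{l:domten} directly. For the semiboundedness condition (c), the cross term expands, using $(D_V\times_\nabla D_B)_0 = (D_V\ot 1)_0 + (\gamma_X\ot 1)(1\ot_\nabla D_B)_0$, into $\|(D_V\ot 1)\eta\|^2$ (which is $\geq 0$) plus the symmetrized commutator-type term involving $[D_V\ot 1, 1\ot_\nabla D_B]$; here Lemma \ref{l:commesti} gives precisely the relative bound $\|[D_V\ot 1, 1\ot_\nabla D_B]\eta\| \leq C(\|\eta\| + \|(D_V\ot 1)\eta\|)$ on compact support, and combined with Lemma \ref{l:domten} and a standard Cauchy–Schwarz/Young inequality argument this yields the required lower bound $-C\|\eta\|^2$ on the localized core.

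I expect the main obstacle to be the \emph{positivity/semiboundedness condition} (c), and more precisely its interplay with the non-completeness of $M$ and $B$: because $D_B$ (hence $D_V\times_\nabla D_B$) need not be selfadjoint, I cannot work with the selfadjoint operators directly but must pass through the localization-by-$x$ machinery of \cite{KS17a,Kaa14,Kaa15} to turn the half-closed chains into spectral triples before applying Kucerovsky's theorem — and I must check that the relative bounds of Lemmas \ref{l:commesti} and \ref{l:domten} survive this localization (they do, since multiplying by $x$ only introduces bounded zeroth-order correction terms via the commutators $\delta(x)$, but this requires care). A secondary technical point is that $X \hot_{C_0(B)} L^2(\sE_B^c)$ is genuinely a Hilbert space (Proposition \ref{p:uniisom}), so that once the three conditions are verified the Kasparov product is represented; the curvature term $-\frac{i}{8}c(\Omega)$ is then not an obstruction to the factorization but merely the explicit identification, supplied by Proposition \ref{prop:tensor-sum}, of the resulting operator with $D_M$ on the core $\sE_M^c$. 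Assembling these pieces — local reduction, connection condition, domain dominance from Lemma \ref{l:domten}, semiboundedness from Lemma \ref{l:commesti}, and the identification from Proposition \ref{prop:tensor-sum} — gives the theorem.
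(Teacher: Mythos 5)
Your overall skeleton coincides with the paper's: use Proposition \ref{prop:tensor-sum} and Lemma \ref{l:funcla} to identify the tensor sum with $D_M-\tfrac{i}{8}c(\Omega)$ and to know it represents $[M]$, and then verify the hypotheses of Theorem \ref{t:kuce} — the connection condition by the same explicit computation with $T_\xi$ for $\xi\in\sE_V^c$, the domain inclusion via Lemma \ref{l:domten}, and the positivity via Lemma \ref{l:commesti} together with a Cauchy--Schwarz/Young argument. However, there are two genuine problems in how you set this up. First, your ``main obstacle'' rests on a misreading of the tool: Theorem \ref{t:kuce} is precisely the version of Kucerovsky's criterion for \emph{half-closed chains}, so no preliminary conversion of the symmetric operators into essentially selfadjoint ones via the $x$-localization of \cite{Kaa14,Kaa15,KS17a} is needed before applying it. That machinery enters only inside the proof of Lemma \ref{l:funcla}. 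If you really did run the verification on the localized operators $xDx$, you would have to re-establish the connection and positivity conditions for those operators, where the tensor-sum structure and the estimates of Lemmas \ref{l:commesti} and \ref{l:domten} are not directly available — so this reduction is not only unnecessary but would create work you have not accounted for.

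Second, and more seriously, you never construct the localizing subset $\Lambda\subseteq C_c^\infty(M)$ required by Definition \ref{defn:loca-subset}; ``localizing by compactly supported functions'' is not enough. Condition (b) of that definition demands that $[D_V\otimes 1,\phi(x)]$ be \emph{trivial} for every $x\in\Lambda$, and this fails for a generic $f\in C_c^\infty(M)$, since $[\sD_V,f]=c_V\big((d_Vf)^\sharp\big)\neq 0$ in general. The paper's choice is $\Lambda=\{\chi_m\circ\pi\}$ for a partition of unity $\{\chi_m\}$ on $B$ subordinate to a countable cover, with $\supp(\chi_m)$ compact: pullbacks along $\pi$ have vanishing vertical differential (equivalently, $\sD_V$ is $C^\infty(B)$-linear), so condition (b) holds, and properness of $\pi$ makes $\supp(\chi_m\circ\pi)=\pi^{-1}(\supp\chi_m)$ compact, which is exactly what lets Lemmas \ref{l:commesti} and \ref{l:domten} apply on $\mathrm{Im}\big((\chi_m\circ\pi)^2\big)$ — note also that the final extension of the positivity estimate from smooth compactly supported elements to all of $\mathrm{Im}\big((\chi_m\circ\pi)^2\big)\cap\dom(D_V\times_\nabla D_B)$ again uses Lemma \ref{l:domten} to pass graph-norm limits for $D_V\times_\nabla D_B$ to graph-norm limits for $D_V\otimes 1$. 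Without this specific construction (or some substitute satisfying all three conditions of Definition \ref{defn:loca-subset}), Theorem \ref{t:kuce} cannot be invoked, so this step must be supplied for your argument to close.
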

\proof
From Proposition \ref{prop:tensor-sum} we know that the tensor sum half-closed chain is unitarily equivalent to the half-closed chain
\[
\big(C_c^\infty(M), L^2(\sE_M^c), \ov{ (D_M)_0 - \frac{i}{8} c(\Om) } \big) \, .
\]
Moreover, Lemma \ref{l:funcla} says that the tensor sum half-closed chain represents the fundamental class $[M]$ in $KK_0( C_0(M), \C)$. We therefore only need to verify the connection condition (Definition \ref{defn:conn-cond}) and the local positivity condition (Definition \ref{defn:loca-subset}) for the tensor sum half-closed chain, the vertical unbounded Kasparov module and the horizontal half-closed chain.

For the connection condition we work with the core $\sE_V^c \ot_{C_c^\infty(B)} \sE_B^c$ for $D_V \ti_\Na D_B$ and the core $\sE_B^c$ for $D_B$. We then compute locally for homogeneous $\xi \in \sE_V^c \subseteq X$ and $r \in \sE_B^c \subseteq L^2(\sE_B^c)$ that 
\[
\begin{split}
& (D_V \ti_\Na D_B) (\xi \ot r) -  (-1)^{\pa \xi} \cd ( \xi \ot D_B(r) ) \\
& \q = D_V \xi \otimes r +  (-1)^{\pa \xi} \cd i \sum_{\alpha} \nabla_{f_\alpha}^X(\xi) \otimes c_B(f_\alpha) r
\end{split}
\]
which clearly extends to a bounded operator from $L^2(\sE_B^c)$ to the interior tensor product $X \hot_{C_0(B)} L^2(\sE_B^c)$. 

For the localizing subset $\Lambda \subseteq C_c^\infty(M)$ we start by choosing a countable open cover $\{ U_m\}$ of the base manifold $B$ and a smooth partition of unity $\{ \chi_m\}$ subordinate to that cover and with $\supp (\chi_m)$ compact for each $m \in \nn$. Clearly, $\{ \pi^{-1}(U_m) \}$ is a countable open cover of $M$, and $\{ \chi_m \ci \pi\}$ is a partition of unity. We will then take as a localizing subset $\Lambda= \{ \chi_m \ci \pi\}$ for which one readily checks the two first conditions of Definition \ref{defn:loca-subset}. Note that $K_m := \supp (\chi_m \ci \pi) = \pi^{-1}( \supp(\chi_m))$ is compact because $\pi : M \to B$ is assumed to be proper. The final condition of Definition \ref{defn:loca-subset} follows from Lemma \ref{l:domten}.

Let $m \in \nn$ and choose a compact subset $L_m \subseteq M$ such that $K_m$ is contained in the interior of $L_m$. To verify the local positivity condition it suffices to show that there exists a $\kappa_m >0$ such that 
\begin{equation}\label{eq:locpos}
\inn{ (D_V \hot 1) \eta, (D_V\times_\nabla D_B)  \eta} + \inn{ (D_V\times_\nabla D_B) \eta, (D_V \hot 1) \eta} \geq - \ka_m \inn{\eta,\eta}
\end{equation}
for all $\eta \in \Tex{Im}\left((\chi_m \ci \pi)^2 \right) \cap \Tex{Dom}(D_V \times_\nabla D_B)$. 

By Lemma \ref{l:commesti} we may find a constant $C_m > 0$ such that
\begin{equation}\label{eq:commineq}
\| [ D_V \ot 1, 1 \ot_\Na D_B](\eta) \|^2 \leq C_m \cd \big( \| \eta \|^2 + \| (D_V \ot 1) (\eta) \|^2 \big)
\end{equation}
for all $\eta \in \sE_V^c \ot_{C_c^\infty} \sE_B^c$ with support contained in $L_m$.

We claim that the inequality in Equation \eqref{eq:locpos} is satisfied for $\ka_m := \frac{1}{2}(1 + C_m)$.

Suppose first that $\eta \in \sE_V^c \otimes_{C_c^\infty(B)} \sE_B^c$ with $\supp(\eta) \subseteq K_m$. Arguing just as in the proof of \cite[Lemma 7.5]{KL12} and using Equation \eqref{eq:commineq} we obtain that
\begin{align*}
& \pm \inn{  \eta,  (\ga_X \ot 1 )[D_V \ot 1,1\otimes_\nabla D_B]  \eta} \\
& \qquad = \pm \frac{1}{2}\big(  \binn{ C_m^{1/2} \eta,  (\gamma_X \ot 1) [D_V \ot 1,1\otimes_\nabla D_B]   C_m^{-1/2} \eta }
\\
&\qquad \qquad  \quad +  \binn{   (\ga_X \ot 1) [D_V \ot 1,1\otimes_\nabla D_B] C_m^{-1/2}  \eta , C_m^{1/2} \eta }\big)\\
&\qquad \leq \frac{1}{2 C_m} \cd \| [D_V \ot 1,1\otimes_\nabla D_B] \eta\|^2 + \frac{C_m}{2} \cd \|\eta\|^2 \\
& \qquad \leq \frac{1}{2} \cd \| (D_V \ot 1) \eta \|^2 + \ka_m \cd \|\eta\|^2 \, .
\end{align*}
This implies that
\begin{equation}\label{eq:locpossmo}
\begin{split}
& \inn{ (D_V \ot 1) \eta, (D_V\times_\nabla D_B)  \eta} + \inn{ (D_V\times_\nabla D_B) \eta, (D_V \ot 1) \eta}  \\
&  \q = \inn{  \eta,  (\ga_X \ot 1 )[1\otimes_\nabla D_B,D_V \ot 1]  \eta}
+  2 \cd \| (D_V \ot 1) \eta \|^2  \\
& \q \geq - \kappa_m \| \eta\|^2  
\end{split}
\end{equation}
and hence that Equation \eqref{eq:locpos} holds for all $\eta \in \sE_V^c \otimes_{C_c^\infty(B)} \sE_B^c$ with $\supp(\eta) \subseteq L_m$.

Suppose now that $\eta \in \Tex{Im}\left((\chi_m \ci \pi)^2 \right) \cap \Tex{Dom}(D_V \times_\nabla D_B)$. We then choose a sequence $\{ \eta_n\}$ in $\sE_V^c \otimes_{C_c^\infty(B)} \sE_B^c$ which converges to $\eta$ in the graph norm of $D_V \ti_\Na D_B$. We may assume, without loss of generality, that $\eta_n$ has support in $L_m \subseteq M$ for all $n \in \nn$. By Lemma \ref{l:domten} this implies that $\{ \eta_n\}$ also converges to $\eta$ in the graph norm of $D_V \ot 1$. The inequality in Equation \eqref{eq:locpos} therefore follows from Equation \eqref{eq:locpossmo}.


We have thus established the local positivity condition and this completes the proof of the theorem. 
\endproof

\section{Almost regular fibrations}


In this section we come to the third main result of this paper, which is the factorization in unbounded KK-theory of the Dirac operator $\sD_{\ov M}$ on the total manifold $\ov M$ of a so-called almost-regular fibration of spin$^c$ manifolds. This factorization takes place on a dense open submanifold $M$ of $\ov M$ and is given in terms of a vertical Dirac operator $\sD_V$ and the Dirac operator $\sD_B$ on a base manifold $B$. The point is here that all information about the total Dirac operator $\sD_{\ov M}$ can be deduced from the behaviour of its restriction $\sD_M$ to $M$. But let us begin with the precise definitions.

\begin{defn}
Let $\overline M$ be a Riemannian manifold (not necessarily compact but without boundary), together with a finite union $P = \cup_{j = 1}^m P_j$ of compact embedded submanifolds $P_j \subseteq \overline M$ each without boundary and of codimension strictly greater than $1$. If there exist a Riemannian manifold without boundary $B$ and a proper Riemannian submersion $\pi : M \to B$ with total space $M= \overline M \sem P$, we call the data $(\overline M,P,B,\pi)$ an {\em almost-regular fibration}.
\end{defn}

Note that $M = \overline M \sem P$ is a dense open subset of $\overline M$ (and $P$ has Riemannian measure zero).

\begin{ex}
A proper Riemannian submersion $\pi: M \to B$ is an example of an almost-regular fibration when we take $P =\emptyset$. 
\end{ex}

\begin{ex}
\label{ex:action} 
Let $G$ be a torus acting isometrically, but not necessarily freely, on a compact Riemannian manifold $N$ such that the orbit space $N/G$ is connected. Let $H_{\Tex{prin}}, H_1,\ldots,H_m \subseteq G$ denote the finitely many isotropy groups, where $H_{\Tex{prin}}$ is the principal stabilizer. We suppose that all the orbits $G/H_j$, $j = 1,\ldots,m$, are singular and moreover, that each subspace of $H_j$-fixed points $N^{H_j}$ is connected. Then, letting $N_0 \subseteq N$ denote the principal stratum, we have that $N\sem N_0 = \cup_{j = 1}^m N^{H_j}$ and that each $N^{H_j} \subseteq N$ is a compact embedded submanifolds of codimension strictly greater than $1$, see \cite[Theorem 5.11, Theorem 5.14 and Proposition 5.15]{Die87} and \cite[Proposition 1.24]{Mei03}. The projection map $\pi: N_0 \to N_0/G$ is a proper Riemannian submersion hence the data $(N, \cup_{j=1}^m N^{H_j}, N_0/G, \pi)$ is an almost-regular fibration.

In fact, in the above we may restrict our attention to a subset $\{ H_{j_i} \}$ of the isotropy groups such that $N \sem N_0 = \cup_i N^{H_{j_i}}$. It does moreover suffice to assume that each quotient space $N^{H_{j_i}}/G$ is connected instead of assuming that each $N^{H_{j_i}}$ is connected. In this case we use the connected components of the $H_{j_i}$-fixed points as our compact embedded submanifolds instead.
\end{ex}
%

\begin{defn}
An {\em almost-regular fibration of spin$^c$ manifolds} is an almost-regular fibration $(\overline M,P,B,\pi)$ such that $\overline M$ and $B$ are equipped with spin$^c$ structures.
\end{defn}

\begin{ex}
\label{ex:action-spinc}
Continuing with Example \ref{ex:action}, if we assume that $N$ carries a $G$-equivariant spin$^c$ structure, it follows that the open submanifold $N_0$ is a $G$-equivariant spin$^c$ manifold. If in addition the action of $G$ on $N$ is effective, the orbit space $N_0/G$ is a spin$^c$ manifold. This forms a key class of examples of almost-regular fibrations of spin$^c$ manifolds. 
\end{ex}

Let us consider an almost-regular fibration of even dimensional spin$^c$ manifolds $(\ov M, P, B,\pi)$. We let $E_{\ov M} \to \ov M$ and $E_B \to B$ denote the $\zz/2\zz$-graded spinor bundles. Thus there exist Dirac operators on the spin$^c$ manifolds $\overline M$ and $B$, given as odd unbounded operators
\[
\begin{split}
& ( D_{\overline M})_0 : \Ga^\infty_c( \ov M, E_{\overline M}) \to L^2( \ov M, E_{\overline M}) \q \Tex{and} \\
& ( D_B)_0 : \Ga^\infty_c(B, E_B) \to L^2(B, E_B) \, .
\end{split}
\]
Moreover, since the spin$^c$ structure on $\ov{M}$ restricts to a spin$^{c}$ structure on the open submanifold $M \subseteq \ov{M}$ we also obtain the Dirac operator
\[
(D_M)_0 : \Ga^\infty_c(M, E_M) \to L^2(M,E_M) \, ,
\]
where the spinor bundle $E_M \to M$ agrees with the restriction of $E_{\ov M}$ to $M \subseteq \ov M$. Since $P \subseteq \ov M$ has Riemannian measure $0$ we may identify the $\zz/2\zz$-graded Hilbert spaces $L^2(M,E_M)$ and $L^2(\ov M, E_{\ov M})$ using the inclusion $\io : \Ga^\infty_c(M, E_M) \to \Ga^\infty_c( \ov M, E_{\overline M})$ given by extension by zero. We may then arrange that
\[
(D_{\ov M})_0( \io (\xi) ) = (D_M)_0(\xi) \q \Tex{for all } \xi \in \Ga^\infty_c(M, E_M) \, .
\]
Each of these unbounded operators determine even half-closed chains 
\[
\begin{split}
& (C^\infty_c(\ov M), L^2(\ov M, E_{\ov M}), D_{\ov M}) \\
& (C^\infty_c(B), L^2(B,E_B), D_B) \q \Tex{and} \\
& (C^\infty_c(M), L^2(M,E_M), D_M) \, ,
\end{split}
\]
representing the fundamental classes $[\ov M] \in KK_0(C_0(\ov M), \C)$, $[B] \in KK_0(C_0(B),\C)$ and $[M] \in KK_0(C_0(M),\C)$, respectively.
%

Using the $*$-homomorphism $\io : C_c^\infty(M) \to C^\infty_c(\ov M)$ given by extension by zero, we may pullback the even half-closed chain 
\[
(C^\infty_c(\ov M), L^2(\ov M,E_{\ov M}), D_{\ov M})
\]
to an even half-closed chain
\[
\io^*( C^\infty_c(\ov M), L^2(\ov M, E_{\ov M}), D_{\ov M} ) = ( C_c^\infty(M), L^2(\ov M, E_{\ov M}), D_{\ov M}) \, .
\]
At the level of bounded KK-theory this pullback operation corresponds to the usual pullback homomorphism
\[
\io^* : KK_0( C_0(\ov M), \C) \to KK_0( C_0(M), \C)
\]
coming from the $*$-homomorphism $\io : C_0(M) \to C_0( \ov M)$ and the contravariant functoriality in the first variable.

The following theorem is the third main result of this paper. The proof relies for the main part on Theorem \ref{thm:fact-KK} and we apply the notation of that theorem here as well.

\begin{thm}
\label{thm:almost-regular-fact}
Suppose that $(\ov M, P, B,\pi)$ is an almost regular-fibration of even dimensional spin$^c$ manifolds. Then it holds (up to unitary equivalence) that
\begin{equation}\label{eq:restric}
\io^*( C^\infty_c(\ov M), L^2(\ov M,E_{\ov M}), D_{\ov M} ) = ( C_c^\infty(M), L^2(M,E_M), D_M) \, .
\end{equation}
Moreover, the even half-closed chain $( C_c^\infty(M), L^2(M, E_M), D_M)$ is the unbounded Kasparov product of the even unbounded Kasparov module $(C_c^\infty(M),X,D_V)$ and the even half-closed chain $(C_c^\infty(B), L^2(B,E_B), D_B)$ up to the curvature term $-\frac{i}{8} c(\Om) : \Ga_c^\infty(M,E_M) \to L^2(M,E_M)$. In particular it holds that
\[
\io^*[ \ov M] = [M] = [ X, D_V(1 + D_V^2)^{-1/2}] \hot_{C_0(B)} [B]
\]
at the level of bounded KK-theory. 
\end{thm}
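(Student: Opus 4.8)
The plan is to split the statement of Theorem~\ref{thm:almost-regular-fact} into two parts: the restriction identity \eqref{eq:restric} and the factorization of $[M]$, and to reduce everything to Theorem~\ref{thm:fact-KK} together with a separate argument showing that passing from $\overline M$ to the dense open subset $M$ does not change the class in bivariant K-theory. The first step is to establish \eqref{eq:restric} itself. Since $P \subseteq \overline M$ has Riemannian measure zero, the extension-by-zero inclusion $\io : \Gamma_c^\infty(M,E_M) \to \Gamma_c^\infty(\overline M, E_{\overline M})$ induces a unitary identification $L^2(M,E_M) \cong L^2(\overline M, E_{\overline M})$, and we already arranged that $(D_{\overline M})_0 \circ \io = (D_M)_0$ on $\Gamma_c^\infty(M,E_M)$. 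So the two triples in \eqref{eq:restric} differ only in that the module in one case uses the representation of $C_c^\infty(M)$ coming from its inclusion into $C_c^\infty(\overline M)$, which is exactly the content of the pullback $\io^*$; hence \eqref{eq:restric} is really a definitional/unitary-equivalence statement, and the nontrivial point is that the pulled-back triple $(C_c^\infty(M), L^2(\overline M, E_{\overline M}), D_{\overline M})$ is still an even half-closed chain representing $\io^*[\overline M]$. This is precisely the content of the result referred to in the introduction as Proposition~\ref{prop:diff-op-dense-subset} (that $\io^*[D_{\overline M}] = [D_M]$), which we may invoke; the key hypothesis that makes it work is that the $P_j$ have codimension strictly greater than $1$, so that $C_c^\infty(M)$ is still a ``sufficiently large'' subalgebra in the appropriate sense (the symmetric operator $D_{\overline M}$ restricted to sections supported away from $P$ has the same closure locally, thanks to the capacity-zero nature of high-codimension sets).

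The second step is the factorization of $[M]$. Here the ambient almost-regular fibration data, once restricted to $M$, is exactly a proper Riemannian submersion $\pi : M \to B$ of even dimensional spin$^c$ manifolds --- this is the setting of Section~3. So Theorem~\ref{thm:fact-KK} applies verbatim: it gives that the even half-closed chain $(C_c^\infty(M), L^2(\sE_M^c), D_M)$ is the unbounded Kasparov product of the even unbounded Kasparov module $(C_c^\infty(M), X, D_V)$ with the even half-closed chain $(C_c^\infty(B), L^2(\sE_B^c), D_B)$, up to the curvature term $-\tfrac{i}{8} c(\Omega)$. Translating this to bounded KK-theory, the curvature term is a bounded-below symmetric perturbation that is invisible at the level of the bounded transform (as recorded in Lemma~\ref{l:funcla}, the half-closed chain $\ov{(D_M)_0 - \tfrac{i}{8} c(\Omega)}$ represents the same class $[M]$ as $D_M$), so
\[
[M] = [X, F_{D_V}] \hot_{C_0(B)} [B]
\]
in $KK_0(C_0(M),\C)$. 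Combining this with the restriction identity from the first step yields $\io^*[\overline M] = [M] = [X, D_V(1+D_V^2)^{-1/2}] \hot_{C_0(B)} [B]$, which is the claimed identity. Note that $D_V$ here is selfadjoint and regular by Theorem~\ref{thm:vertical-KK-cycle}, so the bounded transform $D_V(1+D_V^2)^{-1/2}$ is well-defined and $F_{D_V} = D_V(1+D_V^2)^{-1/2}$.

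I expect the main obstacle to be the first step --- establishing that the pullback along $\io$ does not alter the K-homology class, i.e. $\io^*[\overline M] = [M]$. The subtlety is that $D_{\overline M}$ and $D_M$ have the \emph{same} underlying Hilbert space and differential expression, but the left module structures differ: $C_c^\infty(M)$ acts on $L^2(\overline M, E_{\overline M})$ via its (non-unital, non-dense) inclusion into $C_0(\overline M)$, so one must check that the resulting triple is still a half-closed chain and still represents the pulled-back class. This is where the codimension hypothesis on $P$ enters essentially: one needs a cutoff argument (using functions in $C_c^\infty(M)$ approximating $1$ away from $P$, with controlled commutators) that relies on $P$ having vanishing ``1-capacity'', which is exactly guaranteed by codimension $> 1$. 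Since this is proved separately in the paper as Proposition~\ref{prop:diff-op-dense-subset}, the proof of Theorem~\ref{thm:almost-regular-fact} itself becomes a short assembly: invoke Proposition~\ref{prop:diff-op-dense-subset} for the restriction, invoke Theorem~\ref{thm:fact-KK} for the factorization, and combine.
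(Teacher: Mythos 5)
Your proposal is correct and follows the paper's own proof essentially verbatim: the factorization part is obtained by applying Theorem \ref{thm:fact-KK} to the proper Riemannian submersion $\pi : M \to B$, and the restriction identity \eqref{eq:restric} is reduced to Proposition \ref{prop:diff-op-dense-subset}, whose codimension-$>1$ cutoff argument you correctly identify as the crucial ingredient. Nothing essential is missing.
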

\begin{proof}
Since $\pi : M \to B$ is a proper Riemannian submersion by assumption, we may apply Theorem \ref{thm:fact-KK} to establish the second part of the present theorem. It therefore suffices to prove the identity in Equation \eqref{eq:restric}. But this identity follows immediately from Proposition \ref{prop:diff-op-dense-subset} here below.
\end{proof}

The restriction we impose on the codimension of the compact embedded submanifolds $P_j \subseteq \ov{M}$ in the definition of an almost-regular fibration $({\overline M},P,B,\pi)$ guarantees that given a closed extension of a first-order differential operator ---such as the Dirac operator--- on ${\overline M}$, its restriction to sections that have compact support contained in $M$ gives the same closure. The following lemma will be useful in this context.

\begin{lma}
\label{lma:seq-functions}
Let $P \subseteq {\overline M}$ be a subset of a Riemannian manifold $\ov M$ such that $P = \bigcup_{j =1}^m P_j$ is a finite union of compact embedded submanifolds $P_j \subseteq \ov M$, each of codimension strictly greater than $1$. Then there exists an increasing sequence $\{\psi_n\}$ of positive smooth functions on $\ov M$ such that 
\begin{enumerate}
\item $\supp (\psi_n) \subseteq \ov{M}\setminus P$ for all $n \in \nn$;
\item $\sup_n ( \psi_n|_{\overline M \sem P}) = 1_{\overline M \sem P}$;
\item The exterior derivative $d \psi_n$ has compact support for all $n \in \nn$ and the sequence $\{  d \psi_n \}$ is bounded in $L^2(\ov M, T^* \ov M)$.
\end{enumerate}
\end{lma}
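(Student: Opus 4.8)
The plan is to construct the functions $\psi_n$ explicitly from the distance functions to the submanifolds $P_j$, suitably cut off and rescaled. First I would fix a smooth cutoff function $h : [0,\infty) \to [0,1]$ with $h(t) = 0$ for $t \leq 1$ and $h(t) = 1$ for $t \geq 2$. For each $j$, using that $P_j$ is a compact embedded submanifold of codimension $c_j \geq 2$, I would work in a tubular neighbourhood and let $r_j : \ov M \to [0,\infty)$ denote (a smooth modification of) the distance to $P_j$, which is smooth and satisfies $|d r_j| \leq 1$ near $P_j$. Then a first candidate is $\phi_{j,n}(x) := h(n \cdot r_j(x))$, which vanishes on a neighbourhood of $P_j$, equals $1$ outside a shrinking tube, and has $d\phi_{j,n} = n\, h'(n r_j)\, dr_j$ supported in the annular region $\{1/n \leq r_j \leq 2/n\}$. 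The product $\psi_n := \prod_{j=1}^m \phi_{j,n}$ then satisfies properties (1) and (2): its support avoids $P = \cup_j P_j$, and on $\ov M \sem P$ we have $\phi_{j,n}(x) \to 1$ for each $j$ once $n$ is large enough, so $\sup_n \psi_n = 1_{\ov M \sem P}$. Monotonicity can be arranged since $t \mapsto h(nt)$ is increasing in $n$ and each factor lies in $[0,1]$, so the product increases; if one wants strict monotonicity of the sequence this is harmless to adjust.

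The crucial point is property (3), the $L^2$-bound on $d\psi_n$, and this is where the codimension hypothesis $c_j \geq 2$ enters decisively. By the Leibniz rule, $d\psi_n = \sum_j \big(\prod_{k \neq j} \phi_{k,n}\big) d\phi_{j,n}$, and since the factors are bounded by $1$ it suffices to bound $\|d\phi_{j,n}\|_{L^2}$ uniformly in $n$ for each $j$. Now $|d\phi_{j,n}| \leq n \|h'\|_\infty \cdot \mathbf{1}_{\{1/n \leq r_j \leq 2/n\}}$, so
\[
\|d\phi_{j,n}\|_{L^2(\ov M, T^*\ov M)}^2 \leq n^2 \|h'\|_\infty^2 \cdot \Vol\big(\{ x : 1/n \leq r_j(x) \leq 2/n\}\big).
\]
The volume of the tube of radius $2/n$ around the compact codimension-$c_j$ submanifold $P_j$ is $O\big((1/n)^{c_j}\big)$ by the coarea formula (or the tubular neighbourhood volume expansion), hence the right-hand side is $O\big(n^{2 - c_j}\big)$. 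Since $c_j \geq 2$ this is bounded (and in fact tends to $0$ when $c_j > 2$, but boundedness is all that is claimed). Summing over the finitely many $j$ gives the desired uniform bound, and the compact support of $d\psi_n$ is clear since each $d\phi_{j,n}$ is supported in a bounded annular region inside the tubular neighbourhood of the compact set $P_j$.

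The main obstacle I anticipate is purely technical rather than conceptual: making the distance-to-$P_j$ function genuinely smooth where needed and controlling the tube volumes cleanly. The distance function $d(\cdot, P_j)$ is only Lipschitz, not smooth, but it is smooth on a punctured tubular neighbourhood of $P_j$ (away from the focal/cut locus), which is exactly the region where $d\phi_{j,n}$ is supported for $n$ large; alternatively one replaces it by a smooth function comparable to it, e.g. built from the fibre-norm in a chosen tubular neighbourhood diffeomorphism $P_j \times \mathbb{B}^{c_j} \to \ov M$. One must also check that the tubes around distinct $P_j$ do not interact badly—but since each factor $\phi_{k,n} \in [0,1]$, no positivity or sign issues arise in the product, and the crude estimate above never needs disjointness. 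So the argument reduces to the elementary volume estimate $\Vol(\text{tube of radius } \rho \text{ around } P_j) \lesssim \rho^{c_j}$, which is standard for compact embedded submanifolds, together with the Leibniz rule and the boundedness of the cutoff profile and its derivative.
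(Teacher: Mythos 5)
Your construction is correct and rests on the same mechanism as the paper's own proof: a cutoff at scale $1/n$ around each $P_j$ whose differential has size $O(n)$ but is supported in a region of volume $O(n^{-c_j})$, so that codimension $c_j \geq 2$ gives the uniform $L^2$-bound; the paper merely implements this cutoff by pulling back model functions on $\mathbb{R}^{c_j}$ through finitely many submanifold charts and summing against a partition of unity, rather than by composing a profile with (a smoothing of) the distance function in a tubular neighbourhood and taking the product over $j$. The technical points you flag (smoothness of $r_j$ only away from $P_j$ and for $n$ large, and compactness of the annuli $\{1/n \leq r_j \leq 2/n\}$) are indeed settled by your tubular-neighbourhood fallback, so there is no genuine gap.
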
 
\proof
We may assume, without loss of generality, that $P$ consists of a single closed embedded submanifold $P \subseteq \ov{M}$. Indeed, suppose that an increasing sequence $\{\psi_n^j\}$ of positive smooth functions satisfying $(1)$, $(2)$ and $(3)$ has been constructed for each compact embedded submanifold $P_j \subseteq \ov{M}$. Then the sequence $\{ \psi_n\} := \{ \psi_n^1 \clc \psi_n^m\}$ satisfies $(1)$, $(2)$ and $(3)$ for $P = \cup_{j = 1}^m P_j$.
%

Let $k > 1$ denote the codimension of $P \subseteq \ov{M}$. Choose an increasing sequence $\{ \tilde{\psi_n}\}$ of positive smooth functions on $\rr^k$ such that
\begin{itemize}
\item $\supp (\tilde{\psi_n}) \subseteq \rr^k \sem \{0\}$;
\item $\supp ( d \tilde{\psi_n} ) \subseteq \mathbb{B}_{1/n}(0)$, where $\mathbb{B}_{1/n}(0) \subseteq \rr^k$ denotes the ball of radius $1/n$ and center $0$;
\item $\sup_n( \tilde{\psi_n}|_{\rr^k \sem \{0\}} ) = 1|_{\rr^k \sem \{0\}}$; 
\item There exists a constant $C > 0$ such that $\|  d \tilde{ \psi_n} \|_\infty \leq C \cd n$ for all $n \in \nn$, where $\| \cd \|_\infty$ denotes the supremum norm on $\Ga^\infty_c( \rr^k, T^* \rr^k) \cong C^\infty_c( \rr^k)^{\op k}$.
\end{itemize}

Choose a finite open cover of $P \subseteq \ov M$ by submanifold charts
\[
(V^1, \varphi^1), \ldots, (V^N, \varphi^N) \, ,
\]
such that $\ov{V^i} \subseteq \ov M$ is compact for all $i \in \{1,\ldots,N\}$. Thus, for each $i \in \{1,\ldots,N\}$, we have that
\[
V^i \cap P = \big\{ x \in V^i \mid (\pi_1 \ci \varphi^i)(x) = 0 \big\} \, ,
\]
where $\pi_1 : \rr^k \ti \rr^{\dim(\ov M) - k} \to \rr^k$ denotes the projection onto the first $k$ coordinates.
 

Put $V^0 := \ov{M} \sem P$ and choose a smooth partition of unity $\chi_0, \chi_1,\ldots,\chi_N$ for $\ov{M}$ with $\supp(\chi_i) \subseteq V^i$ for all $i \in \{0,1,\ldots,N\}$. Remark that $\supp(\chi_i)$ is compact for all $i \in \{1,\ldots,N\}$ but that $\supp(\chi_0)$ need not be compact. It does however hold that the support of the exterior derivative $d \chi_0$ is compact.

We define
\[
\psi_n := \chi_0 + \sum_{i = 1}^N \chi_i \cd (\tilde{\psi_n} \ci \pi_1 \ci \varphi^i ) \q \Tex{ for all } n \in \nn \, .
\]
We leave it to the reader to verify that the increasing sequence $\{\psi_n\}$ of positive smooth functions satisfies $(1)$, $(2)$ and $(3)$. When verifying $(3)$, notice that 
\[
\big\{ \| d \tilde{\psi_n} \|_\infty^2 \cd \Tex{Vol}(\mathbb{B}_{1/n}(0) \subseteq \mathbb{R}^k) \big\}_{n = 1}^\infty
\]
is a bounded sequence since $k > 1$.
\endproof


The following proposition generalizes the results in \cite[Prop. 4.12]{BLS15} and \cite[Sect. 2.2.1]{FGMR16}.

\begin{prop}\label{prop:diff-op-dense-subset}
Let ${\overline M}$ be a Riemannian manifold and $P = \cup_{j = 1}^m P_j$ be a finite union of compact embedded submanifolds, each of codimension strictly greater than $1$; write the complement as $M := {\overline M}\setminus P$. Let $E \to {\overline M}$ be a smooth hermitian vector bundle and $(D_{\overline M})_0 : \Gamma^\infty_c({\overline M},E) \to L^2({\overline M},E)$ be a first-order differential operator. Then, if we let $(D_{M})_0 : \Gamma_c^\infty(M,E|_M) \to L^2(M,E|_M)$ denote the restriction of $(D_{\overline M})_0$ to the smooth compactly supported sections $\Gamma_c^\infty(M,E|_M)$, the closure of $(D_{M})_0$ coincides with the closure of $(D_{\overline M})_0$, both as operators in $L^2({\overline M},E)$. 
\end{prop}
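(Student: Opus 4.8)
The plan is to show that the two closed operators have the same domain by exhibiting, for any element in the domain of $\overline{(D_{\overline M})_0}$, an approximating sequence supported in $M$ that also converges in the graph norm. Since $(D_M)_0$ is a restriction of $(D_{\overline M})_0$, we automatically have $\overline{(D_M)_0} \subseteq \overline{(D_{\overline M})_0}$; the content is the reverse inclusion. It clearly suffices to prove that $\Gamma_c^\infty(M,E|_M)$ is a core for $\overline{(D_{\overline M})_0}$, and since $\Gamma_c^\infty(\overline M,E)$ is already a core, it is enough to approximate each $\xi \in \Gamma_c^\infty(\overline M,E)$ in the graph norm of $(D_{\overline M})_0$ by sections in $\Gamma_c^\infty(M,E|_M)$.

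\textbf{Key steps.} First I would invoke Lemma \ref{lma:seq-functions} to produce the increasing sequence $\{\psi_n\}$ of positive smooth functions on $\overline M$ with $\supp(\psi_n) \subseteq M$, with $\sup_n \psi_n = 1$ pointwise on $M$, and with $\{d\psi_n\}$ bounded in $L^2(\overline M, T^*\overline M)$; we may moreover arrange $0 \le \psi_n \le 1$ by replacing $\psi_n$ with $\min(\psi_n, 1)$, or simply by construction. Second, for $\xi \in \Gamma_c^\infty(\overline M, E)$ I set $\xi_n := \psi_n \cdot \xi \in \Gamma_c^\infty(M, E|_M)$. Since $\xi$ has compact support and $\psi_n \nearrow 1$ on $M$ while $P$ has Riemannian measure zero, dominated convergence gives $\xi_n \to \xi$ in $L^2(\overline M, E)$. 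Third, using the Leibniz rule for a first-order differential operator, $(D_{\overline M})_0(\psi_n \xi) = \psi_n \cdot (D_{\overline M})_0 \xi + \sigma_{D_{\overline M}}(d\psi_n)(\xi)$, where $\sigma_{D_{\overline M}}$ is the principal symbol. The first term converges to $(D_{\overline M})_0\xi$ in $L^2$ by dominated convergence as before. For the second term, the symbol is (fiberwise) linear and bounded in the covector, so $\|\sigma_{D_{\overline M}}(d\psi_n)(\xi)\|_{L^2} \le C_\xi \|d\psi_n\|_{L^2(\supp\xi)} \cdot \|\xi\|_\infty$, which is bounded — but this only shows the error stays bounded, not that it vanishes. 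To get convergence to zero I would instead estimate pointwise: $|\sigma_{D_{\overline M}}(d\psi_n)(\xi)(x)| \le c(x)\,|d\psi_n(x)|\,|\xi(x)|$ with $c$ continuous, hence the $L^2$ norm of the error term is bounded by $C \int_{\supp\xi} |d\psi_n|^2$. Since $\{d\psi_n\}$ is $L^2$-bounded and, by the construction in Lemma \ref{lma:seq-functions}, $\supp(d\psi_n)$ shrinks toward $P$ (which has measure zero), I would argue $\int_{\supp\xi \cap \supp(d\psi_n)} |d\psi_n|^2 \to 0$; more carefully, since $\{|d\psi_n|^2\}$ is $L^1$-bounded and supported on sets decreasing to the null set $P$, one passes to a subsequence along which the integrals converge and shows the limit must vanish, which is enough to conclude $\xi_n \to \xi$ in the graph norm along that subsequence — and a core argument only needs a suitable sequence.

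\textbf{Main obstacle.} The delicate point is precisely the vanishing of the symbol–correction term $\sigma_{D_{\overline M}}(d\psi_n)(\xi)$ in $L^2$. The codimension hypothesis $k > 1$ enters exactly here: in the local model of Lemma \ref{lma:seq-functions} one has $\|d\tilde\psi_n\|_\infty \le Cn$ while $d\tilde\psi_n$ is supported in $\mathbb{B}_{1/n}(0) \subseteq \mathbb{R}^k$, whose volume is of order $n^{-k}$, so $\int |d\tilde\psi_n|^2 \lesssim n^2 \cdot n^{-k} = n^{2-k} \to 0$ when $k > 2$ and stays bounded when $k = 2$; the transverse integration over the remaining $\dim(\overline M) - k$ directions against the fixed compactly supported $\xi$ contributes only a bounded factor. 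For $k = 2$ the integrals $\int_{\supp\xi}|d\psi_n|^2$ are merely bounded, not obviously null, so there one genuinely needs the shrinking-support argument: $\int_{\supp(d\psi_n)} |d\psi_n|^2$ is bounded and the supports decrease to the measure-zero set $P$, from which I would extract the desired conclusion (if necessary passing to a subsequence, which is harmless for proving that a subspace is a core). This handling of the borderline codimension-$2$ case, together with the bookkeeping of the finite partition of unity $\{\chi_i\}$ and the product $\psi_n = \psi_n^1 \cdots \psi_n^m$ over the several submanifolds $P_j$, is the part requiring the most care; everything else is the standard Leibniz-rule-plus-dominated-convergence routine.
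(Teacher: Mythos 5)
Your reduction (it suffices to show that every $s \in \Gamma_c^\infty(\overline M,E)$, which is a core for $\overline{(D_{\overline M})_0}$, lies in the domain of $\overline{(D_M)_0}$), your use of the cutoffs $\psi_n$ from Lemma \ref{lma:seq-functions}, and the Leibniz-rule computation all match the paper. The genuine gap is in your treatment of the borderline codimension-two case. You correctly observe that the construction of Lemma \ref{lma:seq-functions} only gives $\int|d\psi_n|^2$ \emph{bounded} when $k=2$, but your proposed repair --- ``the integrals are bounded and the supports decrease to the measure-zero set $P$, so along a subsequence they converge and the limit must vanish'' --- is false. Boundedness of an $L^1$-bounded sequence of densities $|d\psi_n|^2$ whose supports shrink to a null set does \emph{not} force the mass to vanish along any subsequence: mass can simply concentrate. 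Indeed, for the linear-scaling cutoff of Lemma \ref{lma:seq-functions} in $\rr^2$ one has $\|d\tilde\psi_n\|_\infty \sim n$ on a ball of volume $\sim n^{-2}$, so $\int|d\tilde\psi_n|^2$ stays of order one for \emph{every} $n$; no subsequence helps, and your error term $\sigma_{\sD_{\ov M}}(d\psi_n)(\xi)$ need not tend to $0$ in $L^2$. So your graph-norm convergence argument, as written, only works for codimension $k\geq 3$, while the hypothesis (and the main examples, e.g.\ fixed-point sets of torus actions) very much includes $k=2$.

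The paper avoids this entirely by never asking for graph-norm convergence: it only needs $\psi_n s \to s$ in $L^2$ together with \emph{boundedness} of $\{D_M(\psi_n s)\}$, and then invokes \cite[Lemma 1.8.1]{HR00} --- a bounded sequence has a weakly convergent subsequence, and the graph of the closed operator $D_M$ is norm-closed and convex, hence weakly closed, so $s \in \dom(D_M)$. This is exactly why Lemma \ref{lma:seq-functions} is formulated with $\{d\psi_n\}$ merely bounded in $L^2$. (One then gets $D_M = D_{\ov M}$ because $D_M \subseteq D_{\ov M}$ and the core $\Gamma_c^\infty(\ov M,E)$ of $D_{\ov M}$ lies in $\dom(D_M)$.) If you insist on genuine graph-norm approximation, you would have to strengthen the cutoffs in codimension two, e.g.\ use logarithmic cutoffs (reflecting that codimension-two submanifolds have zero capacity), for which $\int|d\psi_n|^2 \sim 1/\log n \to 0$; but that is a modification of Lemma \ref{lma:seq-functions}, not a consequence of it. As it stands, your proof has a hole precisely at $k=2$, and the weak-compactness/closed-graph argument is the missing idea.
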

\proof
First of all, we use that both ${\overline M}$ and $M$ are manifolds without boundary so that by \cite[Lemma 10.2.1]{HR00} both $(D_{\overline M})_0$ and $(D_{M})_0$ are closable. We denote the closures by $D_{\ov M}$ and $D_M$, respectively. Since $P \subseteq {\overline M}$ is a null-set, we can identify $L^2(M,E|_M)$ with $L^2({\overline M},E)$ and consider the operators $D_{\overline M}$ and $D_{M}$ both as being (densely defined) on $L^2({\overline M},E)$. Indeed, the inclusion $\Ga_c^\infty(M,E|_M) \to \Ga_c^\infty(\ov M, E)$ induces a unitary isomorphism of Hilbert spaces $L^2(M,E|_M) \cong L^2(\ov M,E)$.
 
Clearly, $D_M \subseteq D_{\ov M}$, so it suffices to show that the core $\Ga^\infty_c( \ov M,E)$ for $D_{\ov M}$ is included in the domain of $D_M$.

Let thus $s \in \Gamma^\infty_c({\overline M},E)$ be given. For the increasing sequence of positive smooth functions $\{ \psi_n\}$ constructed in Lemma \ref{lma:seq-functions} we have that $\psi_n s \to s$ in the norm of $L^2({\overline M},E)$ (recall that $P$ has Riemannian measure $0$). By construction $\psi_n  s \in \Ga_c^\infty(M,E|_M)$ for all $n \in \nn$ and, by \cite[Lemma 1.8.1]{HR00}, we may thus conclude that $s \in \dom(D_M)$, if we can establish that $\{ D_{M}(\psi_n s) \}$ is a bounded sequence in $L^2({\overline M},E)$. 

Let $\si_{\ov M} : \Ga^\infty(\ov M, T^* \ov{M} ) \to \Ga^\infty(\ov M, \Tex{End}(E))$ denote the principal symbol of the first-order differential operator $\sD_{\ov M} : \Ga^\infty(\ov M,E) \to \Ga^\infty(\ov M,E)$. For each $x \in \ov M$ we let $\| \si_{\ov M}(x) \|_\infty$ denote the norm of the fiber-wise operator $\si_{\ov M}(x) : T_x^* \ov M \to \Tex{End}(E_x)$ and we let $\| s \|_\infty$ denote the supremum norm of the compactly supported section $s : \ov M \to E$. 

We then have the estimates
\begin{align*}
\| D_{M}(\psi_n s)\|_{L^2({\overline M},E)}
&= \| \si_{\ov M}( d \psi_n) \cdot s + \psi_n \cd D_{\overline M} (s)\|_{L^2({\overline M},E)}\\
&\leq \| \si_{\ov M}( d \psi_n) \cdot s \|_{L^2({\overline M},E)} + \| D_{\overline M} (s)\|_{L^2({\overline M},E)} \\
& \leq  \sup_{x \in \supp(s)} \| \si_{\ov M}(x) \|_\infty  \cd \| s \|_\infty \cd \| d \psi_n \|_{L^2(\ov M, T^* \ov M)} \\
& \qqq + \| D_{\overline M} (s)\|_{L^2({\overline M},E)} \, .
\end{align*}
Using property $(3)$ of the sequence $\{ \psi_n \}$ from Lemma \ref{lma:seq-functions}, these estimates show that $\{ \| D_{M}(\psi_n s)\|_{L^2({\overline M},E)} \}$ is a bounded sequence. This proves the proposition.
\endproof

\appendix

\section{On a theorem of Kucerovsky for half-closed chains}
\label{app:kucerovsky}
We summarize the main result of \cite{KS17a}, which generalizes a theorem by Kucerovsky \cite[Theorem 13]{Kuc97} to half-closed chains. Let us fix three $C^*$-algebras $A,B$ and $C$ with $A$ separable and $B$, $C$ $\si$-unital. Throughout this section we will assume that $(\sA,E_1,D_1)$, $(\sB,E_2,D_2)$ and $(\sA,E,D)$ are even half-closed chains from $A$ to $B$, from $B$ to $C$ and from $A$ to $C$, respectively. We denote the $*$-homomorphisms associated to the $C^*$-correspondences $E_1$, $E_2$ and $E$ by $\phi_1 : A \to \mathbb{L}(E_1)$, $\phi_2 : B \to \mathbb{L}(E_2)$ and $\phi : A \to \mathbb{L}(E)$, respectively. We will moreover assume that $E = E_1 \hot_B E_2$ agrees with the interior tensor product of the $C^*$-correspondences $E_1$ and $E_2$. In particular, we assume that $\phi(a) = \phi_1(a) \ot 1$ for all $a \in A$. We let $\ga_1 : E_1 \to E_1$, $\ga_2 : E_2 \to E_2$ and $\ga := \ga_1 \hot \ga_2$ denote the $\zz/2\zz$-grading operators on $E_1$, $E_2$ and $E$.

We will denote the bounded transforms of our half-closed chains by $(E_1, F_{D_1})$, $(E_2,F_{D_2})$ and $(E,F_D)$ and the corresponding classes in KK-theory by $[E_1, F_{D_1}] \in KK_0(A,B)$, $[E_2,F_{D_2}] \in KK_0(B,C)$ and $[E,F_D] \in KK_0(A,C)$. We may then form the interior Kasparov product
\[
[E_1, F_{D_1}] \hot_B [E_2,F_{D_2}] \in KK_0(A,C)
\]
and it becomes a relevant question to find an explicit formula for this class in $KK_0(A,C)$. 

For each $\xi \in E_1$, we let $T_\xi : E_2 \to E$ denote the bounded adjointable operator given by $T_\xi(\eta) := \xi \ot \eta$ for all $\eta \in E_2$.

\begin{defn}
\label{defn:conn-cond}
The {\em connection condition} demands that there exist a dense $\sB$-submodule $\sE_1 \su E_1$ and cores $\sE_2$ and $\sE$ for $D_2 : \Tex{Dom}(D_2) \to E_2$ and $D : \Tex{Dom}(D) \to E$, respectively, such that
\begin{itemize}
\item[(a)] For each $\xi \in \sE_1$:
\[
T_\xi( \sE_2 ) \su \Tex{Dom}(D) \, \, , \q T_\xi^*(\sE) \su \Tex{Dom}(D_2) \, \, , \q \ga_1(\xi) \in \sE_1 
\]
\item[(b)] For each homogeneous $\xi \in \sE_1$:
\[
D T_\xi - (-1)^{\pa \xi} T_\xi D_2 : \sE_2 \to E
\]
extends to a bounded operator $L_\xi : E_2 \to E$.
\end{itemize}
\end{defn}

\begin{defn}
\label{defn:loca-subset}
A {\em localizing subset} is a \emph{countable} subset $\La \su \sA$ with $\La = \La^*$ such that
\begin{itemize}
\item[(a)] The span
\[
\Tex{span}_{\C} \big\{ x \cd a \mid a \in A \, , \, \, x \in \La \big\} \su A
\]
is norm-dense in $A$.
\item[(b)] 
The commutator
\[
[D_1 \hot 1, \phi(x) ] : \Tex{Dom}(D_1 \hot 1) \to E
\]
is  \emph{trivial} for all $x \in \La$.
\item[(c)] We have the domain inclusion
\[
\Tex{Dom}(D) \cap \Tex{Im}(\phi(x^* x)) \su \Tex{Dom}(D_1 \hot 1)
\]
for all $x \in \La$.
\end{itemize}
\end{defn}

\begin{defn}
\label{defn:loca-pos-cond}
Given a localizing subset $\Lambda \su \sA$, the {\em local positivity condition} requires that for each $x \in \La$, there exists a constant $\ka_x > 0$ such that 
\[
\begin{split}
& \binn{ (D_1 \hot 1) \phi(x^*) \xi, D \phi(x^*) \xi } + \inn{ D \phi(x^*) \xi, (D_1 \hot 1) \phi(x^*) \xi} \\
& \q \geq - \ka_x \cd \inn{\xi, \xi} 
\end{split}
\]
for all $\xi \in \Tex{Im}(\phi( x)) \cap \Tex{Dom}( D \phi(x^*) )$.
\end{defn}

In practice, it is useful to record that the local positivity condition would follow if for each $x \in \La$, there exists a constant $\ka_x > 0$ such that
\[
\inn{ (D_1 \hot 1) \eta, D \eta} + \inn{D \eta, (D_1 \hot 1) \eta} \geq - \ka_x \inn{\eta,\eta} \, ,
\]
for all $\eta \in \Tex{Im}(\phi(x^* x)) \cap \Tex{Dom}(D)$.

\begin{thm}\label{t:kuce}
Suppose that the three half-closed chains $(\sA,E_1,D_1)$, $(\sB,E_2,D_2)$ and $(\sA,E,D)$ satisfy the connection condition and the local positivity condition. Then $(E,F_D)$ is the Kasparov product of $(E_1,F_{D_1})$ and $(E_2,F_{D_2})$ in the sense of Connes and Skandalis, \cite{CS84}. In particular we have the identity
\[
[E,F_D] = [E_1,F_{D_1}] \hot_B [E_2,F_{D_2}]
\]
in the KK-group $KK_0(A,C)$.
\end{thm}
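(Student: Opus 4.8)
The plan is to verify the Connes--Skandalis characterization of the interior Kasparov product \cite{CS84}, deducing each of its ingredients from the two unbounded hypotheses. One has to establish (a) that $(E,F_D)$ is an $A$--$C$ Kasparov module, (b) that $F_D$ is an $F_{D_2}$-connection for $(E_1,F_{D_1})$ and $(E_2,F_{D_2})$, and (c) that $\phi(a)\,[F_{D_1\hot 1},F_D]\,\phi(a)^*$ is positive modulo $\mathbb{K}(E)$ for every $a\in A$, where $D_1\hot 1$ denotes the symmetric regular operator on $E=E_1\hot_B E_2$ built from $D_1$ and the identity on $E_2$, whose bounded transform $F_{D_1\hot 1}$ is an $F_{D_1}$-connection. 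Item (a), and the analogous assertions for $E_1$ and $E_2$, is the standard fact that the bounded transform $F_D=D(1+D^*D)^{-1/2}$ of a half-closed chain carries a $KK$-class \cite{BJ83,Hil10}; the content therefore lies in (b) and (c).

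For (b) I would fix $\xi$ in the dense submodule $\sE_1\su E_1$ furnished by the connection condition and insert the integral formula $F_D=\tfrac 2\pi\int_0^\infty D(1+t^2+D^*D)^{-1}\,dt$, together with the analogous one for $F_{D_2}$. Substituting, $T_\xi F_{D_2}-(-1)^{\pa\xi}F_D T_\xi$ becomes a norm-convergent $t$-integral whose integrand, after commuting $T_\xi$ past the resolvents, is a product in which the bounded operator $L_\xi=DT_\xi-(-1)^{\pa\xi}T_\xi D_2$ (supplied by the connection condition) is sandwiched between a resolvent of $D$ and a resolvent of $D_2$. Using the relation $T_\xi\,\phi_2(b)=T_{\xi b}$ and the compactness of $\phi_2(b)(1+D_2^*D_2)^{-1}$ for $b\in\sB$, each such integrand is compact with $t$-integrable norm, hence the difference is compact; the adjoint statement $F_{D_2}T_\xi^*-(-1)^{\pa\xi}T_\xi^*F_D\in\mathbb{K}(E,E_2)$ follows symmetrically from $T_\xi^*(\sE)\su\Tex{Dom}(D_2)$.

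Item (c) is where the non-selfadjointness of $D,D_1,D_2$ and the non-unitality of $A$ really bite, and I expect it to be the main obstacle: with no global domain inclusion $\Tex{Dom}(D)\su\Tex{Dom}(D_1\hot 1)$ available, Kucerovsky's original selfadjoint argument \cite{Kuc97} cannot be run as is, so one must work localized over the countable set $\La$. For a fixed $x\in\La$, the triviality of $[D_1\hot 1,\phi(x)]$ together with the domain inclusion $\Tex{Dom}(D)\cap\Tex{Im}(\phi(x^*x))\su\Tex{Dom}(D_1\hot 1)$ let one translate, through the resolvent/integral formulas, the local positivity condition $\inn{(D_1\hot 1)\phi(x^*)\xi,D\phi(x^*)\xi}+\inn{D\phi(x^*)\xi,(D_1\hot 1)\phi(x^*)\xi}\ge-\ka_x\inn{\xi,\xi}$ into a lower bound modulo compacts for $\phi(x^*)\,[F_{D_1\hot 1},F_D]\,\phi(x^*)^*$. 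Since $\La=\La^*$ is countable and $\Tex{span}\{xa:a\in A,\ x\in\La\}$ is dense in $A$, an approximate-unit patching argument then upgrades this family of pointwise estimates to the global statement (c), and Connes--Skandalis delivers the identity $[E,F_D]=[E_1,F_{D_1}]\hot_B[E_2,F_{D_2}]$.

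The technically cleanest implementation of the localized argument --- and the route I would actually take --- is to localize the three half-closed chains at each $x\in\La$ to genuine spectral triples, replacing $D$ by the essentially selfadjoint operator $\ov{xDx}$ and likewise for $D_1$ and $D_2$, to verify that these localized data satisfy the hypotheses of Kucerovsky's theorem \cite[Theorem 13]{Kuc97}, to apply that theorem, and then to reassemble, using that localization leaves $KK$-classes unchanged. This is precisely the localization machinery of \cite{Kaa14,Kaa15,KS17a}, which is built to circumvent the lack of selfadjointness and of a global domain inclusion in exactly this sort of positivity argument.
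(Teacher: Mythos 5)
The paper itself contains no proof of Theorem \ref{t:kuce}: Appendix \ref{app:kucerovsky} only records the statement, the proof being the main content of the separate paper \cite{KS17a}, so your proposal can only be measured against that reference. Your skeleton is the right one and agrees in spirit with it: pass to bounded transforms of the half-closed chains (Hilsum, \cite{Hil10}), and verify the two Connes--Skandalis conditions --- the connection condition via the integral formula for the bounded transform, the bounded operators $L_\xi$, the relation $T_{\xi b}=T_\xi\phi_2(b)$ and the local compactness of the resolvent of $D_2$; the positivity condition locally over the countable set $\La$, using Definition \ref{defn:loca-subset}(b),(c) and Definition \ref{defn:loca-pos-cond}.

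There are, however, genuine gaps where all the actual difficulty sits. (i) The implementation you say you would take --- localize the three chains at each $x\in\La$ to spectral triples $\ov{\phi(x)D\phi(x)}$, apply Kucerovsky's selfadjoint theorem, and ``reassemble, using that localization leaves KK-classes unchanged'' --- does not work as stated: for an individual $x\in\La$ the set $x\cd A$ is not dense, and there is no reason for the localized cycle to represent the same class (the class-preservation results of \cite{KS17a}, used in Lemma \ref{l:funcla}, require a single localizing element with $x\cd A$ dense); moreover $D_2$ cannot be localized by an element of $\sA$, and the connection condition is not inherited by $\ov{\phi(x)D\phi(x)}$ since only $[D_1\hot 1,\phi(x)]$ is assumed trivial while $[D,\phi(x)]$ is merely bounded, so the hypotheses of \cite[Theorem 13]{Kuc97} cannot simply be checked for the localized data, nor do local product statements formally reassemble into the asserted identity for the original cycles. (ii) In the direct route, the decisive analytic step --- converting the unbounded inequality of Definition \ref{defn:loca-pos-cond} into genuine positivity of $\phi(x)\,[F_{D_1}\hot 1,F_D]\,\phi(x)^*$ modulo $\mathbb{K}(E)$ --- is only asserted; note that a lower bound $-\ka_x$ modulo compacts is not the Connes--Skandalis condition, and here none of $D$, $D_1\hot 1$, $D_2$ is selfadjoint, so $F_D\neq F_D^*$, the identification $F_{D_1\hot 1}=F_{D_1}\hot 1$ and your ``adjoint statement follows symmetrically'' both need separate arguments, and $-\ka_x$ must be absorbed into locally compact operators of the form $\phi(x)(1+D^*D)^{-1/2}$. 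Finally, the ``approximate-unit patching'' from positivity at each $x\in\La$ to positivity for all $a\in A$ is not routine: for $b=xa$ one has $\phi(b)\De\phi(b)^*=\phi(x)\phi(a)\De\phi(a)^*\phi(x)^*$ with $\De=[F_{D_1}\hot 1,F_D]$, which cannot be compared with $\phi(x)\De\phi(x)^*$ without controlling $[\De,\phi(a)]$ modulo compacts, and $[F_{D_1}\hot 1,\phi(a)]$ need not be compact on $E$ since $\mathbb{K}(E_1)\hot 1\not\su\mathbb{K}(E)$ in general. These are precisely the points that \cite{KS17a} is written to settle, so as it stands your text is a plausible outline of that proof rather than a proof.
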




\begin{thebibliography}{99}

\bibitem{Ale10}
M.~M. Alexandrino.
\newblock Desingularization of singular {R}iemannian foliation.
\newblock {\em Geom. Dedicata} 149 (2010)  397--416.

\bibitem{AS09}
I.~Androulidakis and G.~Skandalis.
\newblock The holonomy groupoid of a singular foliation.
\newblock {\em J. Reine Angew. Math.} 626 (2009)  1--37.

\bibitem{BJ83}
S.~Baaj and P.~Julg.
\newblock Th\'eorie bivariante de {K}asparov et op\'erateurs non born\'es dans
  les {$C\sp{\ast} $}-modules hilbertiens.
\newblock {\em C. R. Acad. Sci. Paris S\'er. I Math.} 296 (1983)  875--878.

\bibitem{BDT89}
P.~Baum, R.~G. Douglas, and M.~E. Taylor.
\newblock Cycles and relative cycles in analytic {K}-homology.
\newblock {\em J. Differential Geom.} 30 (1989)  761--804.

\bibitem{BLS15}
J.~Boeijink, W.~Landsman, and W.~D. van Suijlekom.
\newblock Quantization commutes with singular reduction: cotangent bundles of
  compact {L}ie groups.
\newblock arXiv:1508.06763.

\bibitem{BD13}
J.~Boeijink and K.~van~den Dungen.
\newblock {On globally non-trivial almost-commutative manifolds}.
\newblock {\em J. Math. Phys.} 55 (2014)  103508.

\bibitem{BoeS10}
J.~Boeijink and W.~D. van Suijlekom.
\newblock The noncommutative geometry of {Y}ang-{M}ills fields.
\newblock {\em J. Geom. Phys.} 61 (2011)  1122--1134.

\bibitem{BMS13}
S.~Brain, B.~Mesland, and W.~D. van Suijlekom.
\newblock Gauge theory for spectral triples and the unbounded {K}asparov
  product.
\newblock {\em J. Noncommut. Geom.} 10 (2016)  135--206.

\bibitem{C82}
A.~Connes.
\newblock A survey of foliations and operator algebras.
\newblock In {\em Operator algebras and applications, {P}art {I} ({K}ingston,
  {O}nt., 1980)}, volume~38 of {\em Proc. Sympos. Pure Math.}, pages 521--628.
  Amer. Math. Soc., Providence, R.I., 1982.

\bibitem{C94}
A.~Connes.
\newblock {\em Noncommutative Geometry}.
\newblock Academic Press, San Diego, 1994.

\bibitem{CD02}
A.~Connes and M.~Dubois-Violette.
\newblock Noncommutative finite-dimensional manifolds. {I}. {S}pherical
  manifolds and related examples.
\newblock {\em Commun. Math. Phys.} 230 (2002)  539--579.

\bibitem{CL01}
A.~Connes and G.~Landi.
\newblock Noncommutative manifolds: {T}he instanton algebra and isospectral
  deformations.
\newblock {\em Commun. Math. Phys.} 221 (2001)  141--159.

\bibitem{CS84}
A.~Connes and G.~Skandalis.
\newblock The longitudinal index theorem for foliations.
\newblock {\em Publ. Res. Inst. Math. Sci.} 20 (1984)  1139--1183.

\bibitem{Cun83}
J.~Cuntz.
\newblock Generalized homomorphisms between {$C^{\ast} $}-algebras and
  {KK}-theory.
\newblock In {\em Dynamics and processes ({B}ielefeld, 1981)}, volume 1031 of
  {\em Lecture Notes in Math.}, pages 31--45. Springer, Berlin, 1983.

\bibitem{Deb01}
C.~Debord.
\newblock Holonomy groupoids of singular foliations.
\newblock {\em J. Differential Geom.} 58 (2001)  467--500.

\bibitem{Die87}
T.~tom Dieck.
\newblock {\em Transformation Groups}.
\newblock Number~8 in Studies in Mathematics. De Gruyter, 1987.

\bibitem{DL10}
C.~Debord and J.-M. Lescure.
\newblock Index theory and groupoids.
\newblock In {\em Geometric and topological methods for quantum field theory},
  pages 86--158. Cambridge Univ. Press, Cambridge, 2010.

\bibitem{DS17}
C.~Debord and G.~Skandalis.
\newblock Blowup constructions for Lie groupoids and a Boutet de Monvel type
  calculus.
\newblock arXiv:1705.09588.

\bibitem{DGM15}
R.~J. Deeley, M.~Goffeng, and B.~Mesland.
\newblock The bordism group of unbounded {KK}-cycles.
\newblock arXiv:1503.07398.

\bibitem{Dun16}
K.~\noopsort{Dungen}van~den Dungen.
\newblock Locally bounded perturbations and (odd) unbounded {KK}-theory.
\newblock arXiv:1608.02506.

\bibitem{FR15b}
I.~Forsyth and A.~Rennie.
\newblock Factorisation of equivariant spectral triples in unbounded {KK}-theory.
\newblock arXiv:1505.02863.

\bibitem{FGMR16}
I.~Forsyth, M.~Goffeng, B.~Mesland, and A.~Rennie.
\newblock Boundaries, spectral triples and {K}-homology.
\newblock arXiv:1607.07143.

\bibitem{Hig87}
N.~Higson.
\newblock A characterization of {KK}-theory.
\newblock {\em Pacific J. Math.} 126 (1987)  253--276.

\bibitem{HR00}
N.~Higson and J.~Roe.
\newblock {\em Analytic {K}-homology}.
\newblock Oxford Mathematical Monographs. Oxford University Press, Oxford,
  2000.
\newblock Oxford Science Publications.

\bibitem{Hil10}
M.~Hilsum.
\newblock Bordism invariance in {KK}-theory.
\newblock {\em Math. Scand.} 107 (2010)  73--89.

\bibitem{HS87}
M.~Hilsum and G.~Skandalis.
\newblock Morphismes {K}-orient\'es d'espaces de feuilles et fonctorialit\'e
  en th\'eorie de {K}asparov (d'apr\`es une conjecture d'{A}. {C}onnes).
\newblock {\em Ann. Sci. \'Ecole Norm. Sup. (4)} 20 (1987)  325--390.

\bibitem{HS17}
P.~Hochs and Y.~Song.
\newblock An equivariant index for proper actions {I}.
\newblock {\em J. Funct. Anal.} 272 (2017)  661--704.

\bibitem{Kaa14}
J.~Kaad.
\newblock Differentiable absorption of {H}ilbert {$C^*$}-modules, connections, and lifts of unbounded operators.
\newblock To appear in {\em J. Noncommut. Geom.} arXiv:1407.1389

\bibitem{Kaa15}
J.~Kaad.
\newblock The unbounded {K}asparov product by a differentiable module.
\newblock arXiv:1509.09063.

\bibitem{Kaa16}
J.~Kaad.
\newblock Morita invariance of unbounded bivariant {K}-theory.
\newblock arXiv:1612.08405.

\bibitem{KL12}
J.~Kaad and M.~Lesch.
\newblock A local global principle for regular operators in {H}ilbert
  {$C^*$}-modules.
\newblock {\em J. Funct. Anal.} 262 (2012)  4540--4569.

\bibitem{KL13}
J.~Kaad and M.~Lesch.
\newblock Spectral flow and the unbounded Kasparov product.
\newblock {\em Adv. Math.} 248 (2013), 495--530.

\bibitem{KS17c}
J.~Kaad and W.~D. van Suijlekom.
\newblock Factorizing the {D}irac operator on the {C}onnes--{L}andi sphere.
\newblock In progress.

\bibitem{KS17a}
J.~Kaad and W.~D. van Suijlekom.
\newblock On a theorem of {K}ucerovsky for half-closed chains.
\newblock arXiv:1709.08996

\bibitem{KS16}
J.~Kaad and W.~D. van Suijlekom.
\newblock Riemannian submersions and factorization of {D}irac operators.
\newblock To appear in {\em J. Noncommut. Geom.} arXiv:1610.02873.

\bibitem{Kas80a}
G.~G. Kasparov.
\newblock Hilbert {$C\sp{\ast}$}-modules: theorems of Stinespring and Voiculescu.
\newblock {\em J. Operator Theory} 4 (1980), no. 1, 133--150.


\bibitem{Kas80b}
G.~G. Kasparov.
\newblock The operator {K}-functor and extensions of {$C^{\ast}$}-algebras.
\newblock {\em Izv. Akad. Nauk SSSR Ser. Mat.} 44 (1980)  571--636, 719.

\bibitem{Kuc97}
D.~Kucerovsky.
\newblock The {KK}-product of unbounded modules.
\newblock {\em K-Theory} 11 (1997)  17--34.

\bibitem{Lan95}
E.~C. Lance.
\newblock {\em Hilbert {$C\sp *$}-modules. A toolkit for operator algebraists},
  volume 210 of {\em London Mathematical Society Lecture Note Series}.
\newblock Cambridge University Press, Cambridge, 1995.

\bibitem{Mei03}
E.~Meinrenken.
\newblock Group actions on manifolds.
\newblock {\em Lecture Notes, University of Toronto}, 2003.

\bibitem{Mes09b}
B.~Mesland.
\newblock Unbounded bivariant {K}-theory and correspondences in
  noncommutative geometry.
\newblock {\em J. Reine Angew. Math.} 691 (2014)  101--172.

\bibitem{MN04}
R.~Meyer and R.~Nest.
\newblock The Baum-Connes conjecture via localization of categories. 
\newblock {\em Lett. Math. Phys.} 69 (2004) 237--263.

\bibitem{ML18}
B.~Mesland and M.~Lesch.
\newblock Bivariant K-theory via a category of correspondences.
\newblock In progress.

\bibitem{Mic08}
P.~W. Michor.
\newblock {\em Topics in differential geometry}, volume~93 of {\em Graduate
  Studies in Mathematics}.
\newblock American Mathematical Society, Providence, RI, 2008.

\bibitem{Mol88}
P.~Molino.
\newblock {\em Riemannian foliations}, volume~73 of {\em Progress in
  Mathematics}.
\newblock Birkh\"auser Boston, Inc., Boston, MA, 1988.
\newblock Translated from the French by Grant Cairns, With appendices by
  Cairns, Y. Carri\`ere, \'E. Ghys, E. Salem and V. Sergiescu.

\bibitem{Pie06}
F.~Pierrot.
\newblock Op\'erateurs r\'eguliers dans les {$C^\ast$}-modules et structure des
  {$C^\ast$}-alg\`ebres de groupes de {L}ie semisimples complexes simplement
  connexes.
\newblock {\em J. Lie Theory} 16 (2006)  651--689.

\bibitem{Sui14b}
W.~D. {\noopsort{suijlekom}}van Suijlekom.
\newblock Localizing gauge theories from noncommutative geometry.
\newblock {\em Adv. Math.} 290 (2016)  682--708.

\end{thebibliography}

\newcommand{\noopsort}[1]{}\def\cprime{$'$}

\end{document}